\author{K\^ohei Sasaya
	\thanks{Kyoto University, Kyoto 606-8502, Japan. JSPS Research Fellow (DC1). E-mail: \texttt{ksasaya@kurims.kyoto-u.ac.jp}}
}
\title{Some inequalities between Ahlfors regular conformal dimension and spectral dimensions for resistance forms}
\date{November 21, 2022}
\let\tmp\oddsidemargin
\let\oddsidemargin\evensidemargin
\let\evensidemargin\tmp
\newtheorem{lem}{Lemma}[section]
\newtheorem{prop}[lem]{Proposition}
\newtheorem{thm}[lem]{Theorem}
\newtheorem{cor}[lem]{Corollary}
\newtheorem*{cla}{Claim}
\theoremstyle{definition}
\newtheorem{defi}[lem]{Definition}
\newtheorem{ass}[lem]{Assumption}
\theoremstyle{remark}
\newtheorem*{rem}{Remark}
\newcommand{\qs}{\sim_{\mathrm{QS}}}
\newcommand{\ard}{\dim_\mathrm{ARC}}
\newcommand{\vd}{\mathrm{(VD)}}
\newcommand{\diam}{\mathrm{diam}}
\newcommand{\id}{\mathrm{id}}
\newcommand{\ol}[1]{\overline{#1}}
\newcommand{\ul}[1]{\underline{#1}}
\newcommand{\Mc}[1]{\mathcal{#1}}
\newcommand{\Mb}[1]{\mathbb{#1}}
\newcommand{\Mf}[1]{\mathfrak{#1}}
\newcommand{\Mr}[1]{\mathrm{#1}}
\renewcommand{\Cup}{\bigcup}
\renewcommand{\Cap}{\bigcap}
\newcommand{\scup}{\sqcup}
\newcommand{\sCup}{\bigsqcup}
\newcommand{\mim}{\mathrm{Im}}
\newcommand{\mre}{\mathrm{Re}}
\newcommand{\nai}{\mathrm{int}}
\newcommand{\MR}[1]{\href{https://mathscinet.ams.org/mathscinet-getitem?mr=#1}{\texttt{MR#1}}}
\newcommand{\Zbl}[1]{\href{https://zbmath.org/?q=an:#1}{\texttt{Zbl #1}}}
\newcommand{\arX}[1]{\href{https://arxiv.org/abs/#1}{\texttt{arXiv:#1}}}
\newcommand{\same}{\ul{\hspace{2cm}}}
\begin{document}
\maketitle
\begin{abstract}
Quasisymmetric maps are well-studied homeomorphisms between metric spaces preserving annuli, and the Ahlfors regular conformal dimension $\ard(X,d)$ of a metric space $(X,d)$ is the infimum over the Hausdorff dimensions of the Ahlfors regular images of the space by quasisymmetric transformations. For a given regular Dirichlet form with the heat kernel, the spectral dimension $d_s$ is an exponent which indicates the short-time asymptotic behavior of the on-diagonal part of the heat kernel. In this paper, we consider the Dirichlet form induced by a resistance form on a set $X$ and the associated resistance metric $R$. We prove $\ard(X,R)\le \ol{d_s}<2$ for $\ol{d_s}$, a variation  of $d_s$ defined through the on-diagonal asymptotics of the heat kernel. We also give an example of a resistance form whose spectral dimension $d_s$ satisfies the opposite inequality $d_s<\ard(X,R)<2.$
\end{abstract}	
\textbf{Mathematics Subject Classification 2020:}\! Primary 30L10; Secondary 31E05, 35K08, 60J35.\\
\textbf{Key words and phrases:} Ahlfors regular conformal dimension, spectral dimension, resistance form, quasisymmetry, heat kernel.
\section{Introduction and main results}
The subject of this paper is an evaluation of a dimension of metric spaces, defined through the quasisymmetric transformations. We first recall the definition of quasisymmetry.
\begin{defi}[Quasisymmetry]
	Let $X$ be a set and $d,\rho$ be metrics on $X.$ We say $d$ is \emph{quasisymmetric} to $\rho,$ and write $d\qs\rho,$ if there exists a homeomorphism $\theta:[0,\infty)\to[0,\infty)$ such that for any $x,y,z\in X$ with $x\ne z,$
	\[\rho(x,y)/\rho(x,z)\le \theta \bigl(d(x,y)/d(x,z)\bigr).\]
\end{defi}
Roughly speaking, this definition means that an annulus in $(X,d)$ is comparable to one in $(X,\rho)$. A typical example of a metric quasisymmetric to a given metric $d$ is $d^\alpha$ for each $\alpha\in(0,1).$ It is known that $\qs$ is an equivalence relation among metrics on $X,$ and that if $d\qs\rho$ then $\rho$ induces the same topology as $d.$ \par
Quasisymmetry between general metric spaces was defined by Tukia and V\"ais\"al\"a in \cite{TV} as the analogy with the case of $\Mb{R}.$ Note that quasisymmetry on $\Mb{R}$ was discovered as a characterization of the boundary values of quasiconformal mappings from the upper half-plane to itself, by Beurling and Ahlfors in \cite{BA}, and named by Kelingos in \cite{Kel}.
Properties of quasisymmetry were well-studied in analysis on metric spaces (see \cite{Hei, Sem}, for example).  Quasisymmetry has been also used in various fields, such as heat kernel estimates (see \cite{BCM,BM,KM,Kig12,Mur}, for example) and hyperbolic group theory (see \cite{BK, BoP, MT, Pau}, for example).\par

The Ahlfors regular conformal dimension of a metric space $(X,d)$ is defined as follows. We set $B_d(x,r):=\{y\in X\mid d(x,y)<r \}$ for $x\in X$ and $r>0,$ and $\diam(X,d):=\sup_{x,y\in X}d(x,y).$
\begin{defi}[Ahlfors regular conformal dimension]\label{AR}
	For $\alpha\in(0,\infty),$ the metric $d$ is called \emph{$\alpha$-Ahlfors regular} if the $\alpha$-dimensional Hausdorff measure $\Mc{H}_\alpha$ satisfies $C^{-1}r^\alpha\le\Mc{H}_\alpha(B_d(x,r))\le Cr^\alpha$ for any $0<r\le \diam(X,d)$ and $x\in X,$ for some $C>1.$ (Note that if $(X,d)$ is $\alpha$-Ahlfors regular then $\dim_H(X,d)=\alpha,$ where $\dim_H$ is the Hausdorff dimension.) The \emph{Ahlfors regular conformal dimension} $\ard(X,d)$ of $(X,d)$ is defined by
\[
	\ard(X,d)=\inf\biggl\{\alpha\in(0,\infty) \biggm|  
	\begin{minipage}{180pt}
		there exists an $\alpha$-Ahlfors regular metric $\rho$ on $X$ with $d\qs\rho$
	\end{minipage} \biggr\}.\]

\end{defi}
This exponent implicitly appeared in Bourdon and Pajot's paper \cite{BoP} and was named by Bonk and Kleiner in \cite{BK}. In the latter paper, it was related to Cannon's conjecture, which claims that every Gromov hyperbolic group whose boundary is homeomorphic to the 2-sphere has a discrete, cocompact and isometric action on the hyperbolic 3-space $\Mb{H}^3.$ 
$\ard(X,d)$ was also characterized as a critical value related to the combinatorial $p$-modulus of a family of curves $\Gamma$ in a graph $(V, G)$ (approximating $(X,d)$), defined by
\[\mathrm{Mod}_p(\Gamma)=\inf\Bigl\{ \sum_{v\in V}f(v)^p \Bigm| f:V\to[0,\infty),\sum_{v\in\gamma}f(v)\ge1\text{ for any }\gamma\in\Gamma \Bigr\}\]
 (see \cite{Car, Kig20}). This characterization of $\ard(X,d)$ has been also used in recent studies on the construction of $p$-energies on fractals (\cite{Kig21, Shi}).
 \par
 
In \cite{Kig20}, Kigami introduced the notion of a partition satisfying the basic framework and used it to evaluate the Ahlfors regular conformal dimension of compact metric spaces. Roughly speaking, a partition satisfying the basic framework is a successive division of a given compact metric space with some good conditions. We explain this idea in the case of the Sierpi\'nski carpet. Let $Q=\{z\mid \max\{|\mre (z)|, |\mim (z)|\} \le 1/2\}\subset \Mb{C}$ and $p_j$ be the points on the boundary of $Q$ with $\arg(z)=j\pi/4$ for $1\le j\le 8$ (see Figure \ref{figQ}). We also let $\varphi_j(z)=p_j+(z-p_j)/3.$ The standard Sierpi\'nski carpet $\Mr{SC}$ is the unique nonempty compact subset of $\Mb{C}$ satisfying $ \Mr{SC}=\cup_{j=1}^8 \varphi_j( \Mr{SC})$ (see Figure \ref{FigSC}). An example of a partition $K$ of $( \Mr{SC},|\cdot|)$ satisfying the basic framework is a map from $\{\phi\}\cup\Cup_{n\ge1}\{1,...,8\}^n$ to the power set $\Mf{P}( \Mr{SC}),$ defined by 
\[K(\phi)= \Mr{SC}\text{ and }K(\{w_j\}_{j=1}^n)=\varphi_{w_1}\circ\cdots\circ\varphi_{w_n}( \Mr{SC}).\]
In \cite{Kig20}, Kigami considered the graph structure on $\{1,...,8\}^n$ for each $n$ such that there is an edge between $w,v\in\{1,...,8\}^n$ if $K(w)\cap K(v)\ne\emptyset$ and $w\ne v,$ and defined some potential theoretic exponents $\ol{d}^s_p(K),$ $\ul{d}^s_p(K)$ of this family of graphs, which he called the upper and lower $p$-spectral dimensions, for $p>0.$ See Definitions \ref{bf} and \ref{pspec} for the precise definitions of a partition satisfying the basic framework and the $p$-spectral dimensions. 
For these exponents, Kigami showed the following result.
\begin{figure}[tb]
	\centering
	\begin{minipage}{0.46\linewidth}
		\centering
		\begin{tikzpicture}[scale=0.54]
				\filldraw[lightgray] (-4,-4) -- (4,-4) -- (4,4) -- (-4,4) -- cycle ;
			\draw[ ->, >=stealth] (-5, 0) -- (5,0) node[right] {$\mre$} ;
			\draw[, ->, >=stealth] (0, -5) -- (0,5) node[above] {$\mim$} ;
			\draw[very thin] (-4,-4) -- (4,-4) -- (4,4) -- (-4,4) -- cycle ;
		
			\coordinate[label=below left:$p_1$] (p1) at (4,4);
			\coordinate[label=below left:$p_2$] (p2) at (0,4);
			\coordinate[label=below left:$p_3$] (p3) at (-4,4);
			\coordinate[label=below left:$p_4$] (p4) at (-4,0);
			\coordinate[label=below left:$p_5$] (p5) at (-4,-4);
			\coordinate[label=below left:$p_6$] (p6) at (0,-4);
			\coordinate[label=below left:$p_7$] (p7) at (4,-4);
			\coordinate[label=below left:$p_8$] (p8) at (4,0);
			\foreach \a in {1,...,8}
			\filldraw (p\a) circle [x radius=0.2, y radius=0.2];
			\coordinate[label=below right: $\frac{1}{2}$] (set) at (4,0);
			\coordinate[label=below left: $Q$] (set) at (-1.5,-1.5);
		\end{tikzpicture}
		\caption{$Q$ and $\{p_j\}_{j=1}^8.$}
		\label{figQ}
	\end{minipage}
	\begin{minipage}{0.5\linewidth}
		\centering
		\vspace{2.3\baselineskip}
		\begin{tikzpicture}[scale=0.08]
				\filldraw[black] (27,27) -- (-27,27) -- (-27,-27) -- (27,-27)-- cycle;
				\foreach \a in {1,...,81}{
					\foreach \b in {1,...,81}{
						\filldraw[fill=white, very thin] ({-247/9+2*\a/3}, {-247/9+2*\b/3}) --({-247/9+2*\a/3}, {-245/9+2*\b/3}) -- (-245/9+2*\a/3, -245/9+2*\b/3) -- (-245/9+2*\a/3, -247/9+2*\b/3) -- cycle;
				}}
				\foreach \a in {1,...,27}{
					\foreach \b in {1,...,27}{
						\filldraw[fill=white] ({-85/3+2*\a}, {-85/3+2*\b}) --({-85/3+2*\a}, {-83/3+2*\b}) -- (-83/3+2*\a, -83/3+2*\b) -- (-83/3+2*\a, -85/3+2*\b) -- cycle;
				}}
				\foreach \a in {1,...,9}{
					\foreach \b in {1,...,9}{
						\filldraw[fill=white] ({-31+6*\a}, {-31+6*\b}) --({-31+6*\a}, {-29+6*\b}) -- (-29+6*\a, -29+6*\b) -- (-29+6*\a, -31+6*\b) -- cycle;
				}}
				\foreach \a in {1,...,3}{
					\foreach \b in {1,...,3}{
						\filldraw[fill=white] ({-39+18*\a}, {-39+18*\b}) --({-39+18*\a}, {-33+18*\b}) -- (-33+18*\a, -33+18*\b) -- (-33+18*\a, -39+18*\b) -- cycle;
				}}
				\filldraw [fill=white] (-9,-9) -- (-9,9) -- (9,9) -- (9,-9) -- cycle; 
		\end{tikzpicture}
	\vspace{1.4\baselineskip}
		\caption{(Standard) Sierpi\'nski carpet.}
		\label{FigSC}
	\end{minipage}
\end{figure}
\begin{thm}[{\cite[Theorem 4.7.9]{Kig20} and \cite[Theorem 3.9]{Sas1}}]\label{Kmain}
	Let $(X,d)$ be a metric space with a partition $K$ satisfying the basic framework. Then for $p>0,$
	\begin{enumerate}
		\item if $p>\ard(X,d)$ then $p>\ol{d}^s_p(K)\ge\ul{d}^s_p(K)\ge \ard(X,d).$
		\item If $p\le\ard(X,d)$ then $p\le\ul{d}^s_p(K)\le\ol{d}^s_p(K)\le\ard(X,d).$
	\end{enumerate}
\end{thm}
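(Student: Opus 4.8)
The plan is to recast everything in terms of the combinatorial $p$-modulus introduced above. For each level $n$, let $\Gamma_n$ be the family of paths in the graph on the level-$n$ words that cross $(X,d)$ (joining cells separated by a distance comparable to $\diam(X,d)$), and write $M_{p,n}:=\Mr{Mod}_p(\Gamma_n)$. By the construction recalled in Definition \ref{pspec}, the exponents $\ol{d}^s_p(K)$ and $\ul{d}^s_p(K)$ are built from the exponential rates
\[
\ol{\mathcal R}_p:=\limsup_{n\to\infty}\tfrac1n\log M_{p,n},\qquad
\ul{\mathcal R}_p:=\liminf_{n\to\infty}\tfrac1n\log M_{p,n},
\]
as non-decreasing functions of the rate that take the value $p$ precisely when the rate is $0$, with $\ol{d}^s_p(K)$ governed by $\ol{\mathcal R}_p$ and $\ul{d}^s_p(K)$ by $\ul{\mathcal R}_p$. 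In particular the middle inequality $\ul{d}^s_p(K)\le\ol{d}^s_p(K)$ is immediate from $\ul{\mathcal R}_p\le\ol{\mathcal R}_p$, and it remains only to locate the rates relative to $0$ and to the critical exponent.

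I would first record two structural facts about $M_{p,n}$. The admissibility condition defining $\Mr{Mod}_p(\Gamma_n)$ does not depend on $p$, so H\"older's inequality applied to a common admissible function shows both that $p\mapsto M_{p,n}$ is non-increasing and that $p\mapsto\log M_{p,n}$ is convex; passing to the limit, $p\mapsto\ol{\mathcal R}_p$ and $p\mapsto\ul{\mathcal R}_p$ are convex and non-increasing. The second fact is the identification, due to Carrasco Piaggio \cite{Car} and reformulated by Kigami \cite{Kig20} in the language of partitions, of $\ard(X,d)$ as the critical exponent of the modulus: $M_{p,n}$ decays exponentially for $p>\ard(X,d)$ and fails to decay for $p<\ard(X,d)$, so that $\ol{\mathcal R}_p<0$ when $p>\ard(X,d)$ and $\ul{\mathcal R}_p\ge0$ when $p<\ard(X,d)$. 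This is the step that transports the geometric information—about Ahlfors regular quasisymmetric images and their Hausdorff dimensions—into the combinatorial quantity $M_{p,n}$.

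Granting these, the two chains follow by reading off the normalization. For $p>\ard(X,d)$ the sign $\ol{\mathcal R}_p<0$ gives directly $\ol{d}^s_p(K)<p$, while the lower bound $\ul{d}^s_p(K)\ge\ard(X,d)$ comes from comparing $\ul{\mathcal R}_p$ with its value $0$ at the critical exponent: convexity of $p\mapsto\ul{\mathcal R}_p$, anchored at $\ul{\mathcal R}_{\ard(X,d)}=0$, pins the rate above the level that corresponds, through the normalization, to the value $\ard(X,d)$. For $p\le\ard(X,d)$ the argument is symmetric: $\ul{\mathcal R}_p\ge0$ yields $\ul{d}^s_p(K)\ge p$, and the same convex, monotone dependence of the rate on $p$ bounds $\ol{d}^s_p(K)$ above by $\ard(X,d)$. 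The critical value $p=\ard(X,d)$ is the common meeting point, where all four quantities collapse to $\ard(X,d)$.

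The main obstacle is the second structural fact, and especially its sharp form near the critical exponent. Away from $\ard(X,d)$ the clean exponential dichotomy of \cite{Kig20} suffices, and it already delivers the strict inequalities of case (1). The non-strict inequalities up to and including $p=\ard(X,d)$ are more delicate, because there $M_{p,n}$ neither grows nor decays exponentially and the crude rate comparison degenerates; controlling this boundary regime—so as to conclude $\ol{d}^s_p(K)\le\ard(X,d)$ for $p\le\ard(X,d)$ and that equality holds exactly at $p=\ard(X,d)$—is precisely where the refined modulus estimates of \cite{Sas1} enter. Producing that quantitative control, rather than the subsequent bookkeeping that converts it into the stated inequalities, is the heart of the proof.
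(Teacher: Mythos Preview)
The paper does not actually contain a proof of this theorem: it is quoted from \cite[Theorem 4.7.9]{Kig20} (compact case) and \cite[Theorem 3.9]{Sas1} (extension to the unbounded case), with only the remark that the slightly different hypotheses are reconciled by \cite[Theorem 4.7.6]{Kig20}. So there is no in-paper argument to compare your sketch against; the result is used as a black box, and later the paper only invokes the theorem cited right after Definition~\ref{pspec}, namely
\[
\ard(X,d)=\inf\{p\mid \textstyle\liminf_k\sup_w\Mc{E}_{p,k,w}=0\}=\inf\{p\mid \textstyle\limsup_k\sup_w\Mc{E}_{p,k,w}=0\},
\]
which is the modulus/critical-exponent statement you are alluding to.

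As a standalone sketch, your outline captures the right mechanism for the ``easy'' halves ($p>\ard\Rightarrow\ol d^s_p<p$ and $p<\ard\Rightarrow\ul d^s_p\ge p$): these follow directly from the critical-exponent characterization together with the explicit formula in Definition~\ref{pspec}. But there are genuine gaps. First, the quantity in Definition~\ref{pspec} is not a global crossing modulus $M_{p,n}$ of $(X,d)$; it is $\sup_{w}\Mc{E}_{p,k,w}$, a uniform \emph{local} $p$-capacity between $\acute\pi^{-k}(w)$ and $\Mc{C}_{w,k}$, and the arguments in \cite{Kig20} exploit this local, self-similar structure (submultiplicativity over scales) rather than a single global modulus. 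Second, your key step for the remaining inequalities (e.g.\ $\ul d^s_p\ge\ard$ when $p>\ard$) rests on ``convexity of $p\mapsto\ul{\mathcal R}_p$''. For fixed admissible $f$ the map $p\mapsto\log\sum f^p$ is convex, but the modulus is an \emph{infimum} over such $f$, and an infimum of convex functions need not be convex; so this step, as written, does not go through. In \cite{Kig20} these bounds come instead from comparing $\Mc{E}_{p,k,w}$ across different $p$ via counting estimates involving $N_*$, not from abstract convexity. Finally, your description of \cite{Sas1} is off: its role here is the extension of the framework to non-compact spaces, not sharper modulus control at $p=\ard$.
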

Note that the assumption in Theorem \ref{Kmain} is slightly different from that in \cite[Theorem 4.7.9]{Kig20}, but it is justified by \cite[Theorem 4.7.6]{Kig20}. We also note that the contribution of \cite{Sas1} was an extension of the framework and the result to non-compact spaces. We emphasize that the $p$-spectral dimensions depend only on the given metric space and the partition, and do not have any stochastic characterization. However, it was pointed out in \cite{Kig20} that if $(X,d)$ is the Sierpi\'nski gasket or a generalized Sierpi\'nski carpet with the Euclidean metric and $K$ is the canonical partition as described above, then $\ol{d}^s_2(K)$ and $\ul{d}^s_2(K)$ coincide with the spectral dimension, defined as follows, of the standard Dirichlet form.
\begin{defi}[Spectral dimension]
Let $(X,d)$ be a locally compact separable metric space, $\mu$ be a Radon measure on $X$ with full support, and $(\Mc{E,F})$ be a regular Dirichlet form on $L^2(X,\mu).$ We assume that $(\Mc{E,F})$ has the associated \emph{heat kernel} (or \emph{transition density}), namely, a (jointly) continuous function $p(t,x,y):(0,\infty)\times X\times X\to[0,\infty)$ such that
\[T_tu(x)=\int_X p(t,x,y)u(y)d\mu(y)\quad\text{for $\mu$-a.e. $x\in X$}\]
for any $t\in(0,\infty)$ and any $u\in L^2(X,\mu),$ where $\{T_t\}_{t\in(0,\infty)}$ denotes the Markovian semigroup on $L^2(X,\mu)$ associated with $(\Mc{E,F}).$ The limit
\[d_s(\mu,\Mc{E,F})=d_s(X,\mu,\Mc{E,F}):=-2\lim_{t\downarrow0}\frac{\log p(t,x,x)}{\log t}\]
is called the \emph{spectral dimension} of the regular Dirichlet space $(X,\mu,\Mc{E,F}),$ if it exists and is independent of a choice of $x\in X.$
\end{defi}
In this paper, we prove an inequality similar to the case of $p=2$ of Theorem \ref{Kmain} (1), between the Ahlfors regular conformal dimension and a variation of the spectral dimension defined through the on-diagonal asymptotics of the heat kernel, for the case where the Dirichlet form is induced by a resistance form, defined as follows.

\begin{defi}[Resistance form]
	Let $X$ be a set, $\Mc{F}$ be a linear subspace of the space $\ell(X)$ of $\Mb{R}$-valued functions on $X,$ and $\Mc{E}$ be a nonnegative quadratic form on $\Mc{F}.$ The pair $(\Mc{E,F})$ is called a \emph{resistance form} on $X$ if it satisfies the following conditions. 
	\begin{align}
		\bullet\ & 1_X\in\Mc{F},\text{ and }\Mc{E}(u,u)=0\text{ if and only if $u$ is constant.}\label{RF1} \\
		\bullet\ &\text{Define an equivalence relation $\sim$ as $u\sim v$ if and only if $u-v$ is}\notag\\
		& \text{constant, then $(\Mc{F}/\!\sim,\Mc{E})$ is a Hilbert space.} \label{RF2} \\
		\bullet\ & \text{If $x\ne y$ then there exists $u\in\Mc{F}$ with $u(x)\ne u(y).$} \\
		\bullet\ &R(x,y)\!:=\bigl(\inf\{\Mc{E}(u,u)\mid u\in\Mc{F},\ u(x)=1,\ u(y)=0\}\bigr)^{-1}<\infty \label{RF3}\text{ if }x\ne y. \\
		\bullet\ &\text{If $u\in\Mc{F}$ then $\hat u:=\max\{0,\min\{1,u\}\}\in\Mc{F}$ and $\Mc{E}(\hat u,\hat u)\le \Mc{E}(u,u).$} \label{RF4}
	\end{align}
We define $R(x,x)=0$ for $x\in X.$
\end{defi}
One of the most basic properties of a resistance form is that the infimum in \eqref{RF3} is attained and defines a metric $R$ on $X$, called the \emph{resistance metric} associated with the resistance form. The notion of resistance form was introduced in \cite{Kig95}. Typical examples of the Dirichlet forms induced by resistance forms are the standard Dirichlet forms on ``low-dimensional'' fractals, such as p.c.f. self-similar fractals. This framework includes Dirichlet forms whose associated Hunt processes have jumps (see \cite[Chapter 16]{Kig12}, for example). Moreover, there are also examples of resistance forms on some spatially inhomogeneous fractals (see Figure \ref{nonsym} and \cite{Ham}, for example) and more general sets (see \cite{Cro}, for example).\par
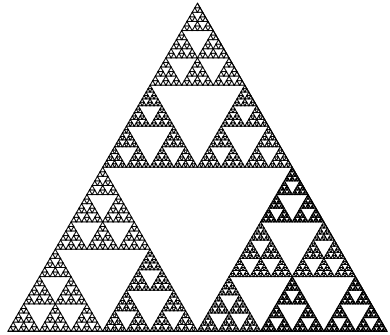
\begin{figure}[tb]
	\centering
	\begin{tikzpicture}[scale=0.14]
				\foreach \a/\b/\c in {0/0/9,0/9/9,0/27/9,18/0/6,18/6/6,24/6/6}{
						\coordinate (p\a\b) at ({\a+\b*cos(60)+\c/2},{\b*sin(60)+\c*sqrt(3)/6});
						\foreach \d/\e in {30/1, 90/2, 150/1, 210/2, 270/1, 330/2}{
								\foreach \f/\g in {30/1, 90/2, 150/1, 210/2, 270/1, 330/2}{
										\foreach \h in {90, 210, 330}{
												\draw[very thin] ($(p\a\b)+\c*\e*({sqrt(3)*cos(\d)/9},{sqrt(3)*sin(\d)/9})+\c*\g*({sqrt(3)*cos(\f)/27},{sqrt(3)*sin(\f)/27})+\c*({sqrt(3)*cos(\h)/54}, {sqrt(3)*sin(\h)/54})-\c*({sqrt(3)*cos(30)/54},{sqrt(3)*sin(30)/54})$) -- ++ ({\c/18},0) --++ ({cos(120)*\c/18},{sin(120)*\c/18})--cycle ;
								}}}}
				\foreach \a/\b/\c in {9/0/9,24/0/6,30/0/6,18/12/6,0/18/9,9/18/9}{
						\coordinate (p\a\b) at ({\a+\b*cos(60)+\c/2},{\b*sin(60)+\c*sqrt(3)/6});
						\foreach \d in {90, 210, 330}{
								\foreach \e in {90, 210, 330}{
										\foreach \f/\g in {30/1, 90/2, 150/1, 210/2, 270/1, 330/2}{
												\foreach \h in {90, 210, 330}{
														\draw[very thin] ($(p\a\b)+\c*({sqrt(3)*cos(\d)/6},{sqrt(3)*sin(\d)/6})+\c*({sqrt(3)*cos(\e)/12},{sqrt(3)*sin(\e)/12})+\c*\g*({sqrt(3)*cos(\f)/36},{sqrt(3)*sin(\f)/36})+\c*({sqrt(3)*cos(\h)/72},{sqrt(3)*sin(\h)/72})-\c*({sqrt(3)*cos(30)/72},{sqrt(3)*sin(30)/72})$) -- ++ ({\c/24},0) --++ ({cos(120)*\c/24},{sin(120)*\c/24})--cycle ;
									}}}}}
		\draw(0,0)--(36,0) -- (18, {sqrt(3)*18});
	\end{tikzpicture}
	\caption{Inhomogeneous fractal, having a canonical resistance form.}
	\label{nonsym}
\end{figure}
In the remainder of this section except for Subsection \ref{secpre}, we make the following assumption, which is needed for our main theorem.
\begin{ass}\label{ass} 
	$(\Mc{E,F})$ is a resistance form on a set $X,$ and the resistance metric $R$ associated with  $(\Mc{E,F})$ is complete and satisfies $\ard(X,R)<\infty.$ 
\end{ass}
We note that the condition $\ard(Y,\rho)<\infty$ for a metric space $(Y,\rho)$ has simple geometric characterizations which may be easily checked (see Theorem \ref{xbf}). In particular, under Assumption \ref{ass}, there exists a partition $K$ of $(X,R)$ satisfying the basic framework. Let us recall the definition of the volume doubling property.
\begin{defi}[Volume doubling property]
	A Borel measure $\mu$ on a metric space $(Y,\rho)$ has the \emph{volume doubling property} with respect to $\rho$ if
	\[0<\mu(B_\rho(x,2r))\le C\mu(B_\rho(x,r))<\infty\]
	for any $x\in Y$ and $r>0,$ for some $C>1.$ Then we say $\mu$ is (VD)$_\rho$ for short. We write $\Mc{M}_{(Y,\rho)}$ for the set of all Borel measures on $(Y,\rho)$ that are (VD)$_\rho.$ 
\end{defi}
For any $\mu\in\Mc{M}_{(X,R)},$ we can check that the assumptions of \cite[Theorems 9.4 and 10.4]{Kig12} are satisfied (see Proposition \ref{pre}) and obtain the following lemma.
\begin{lem}\label{xhk}
	Let $\mu\in\Mc{M}_{(X,R)}.$ For $u,v\in \Mc{F}\cap L^2(X,\mu),$ we define $\Mc{E}_{\mu,1}(u,v)$ by
	\[\Mc{E}_{\mu,1}(u,v)=\Mc{E}(u,v)+\int_X uvd\mu,\]
	then $(\Mc{F}\cap L^2(X,\mu), \Mc{E}_{\mu,1})$ is a Hilbert space.
	Let $\Mc{D}_\mu$ be the closure of $\Mc{F}\cap C_0(X)$ with respect to $\Mc{E}_{\mu,1},$ and $\Mc{E}_\mu=\Mc{E}|_{\Mc{D}_\mu\times \Mc{D}_\mu},$ where $C_0(X)$ is the set of all continuous functions on $(X,R)$ whose supports are compact. Then $(\Mc{E}_\mu, \Mc{D}_\mu)$ is a regular Dirichlet form on $L^2(X,\mu).$ Moreover, the associated heat kernel $p_\mu(t,x,y)$ exists.
\end{lem}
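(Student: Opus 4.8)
The plan is to obtain all three assertions by checking that, under Assumption \ref{ass} together with $\mu\in\Mc{M}_{(X,R)}$, the hypotheses of \cite[Theorems 9.4 and 10.4]{Kig12} are met, and then quoting those theorems; this is precisely the role of Proposition \ref{pre}, so the real content lies in the verification rather than in any fresh estimate. Kigami's results require a \emph{regular} resistance form whose resistance metric space $(X,R)$ is complete, separable and locally compact, together with a Radon measure of full support, and---for the heat kernel---the volume doubling property of $\mu$. I would therefore establish each of these structural facts in turn.

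For the metric space, completeness of $(X,R)$ is assumed. Finiteness of $\ard(X,R)$ forces $(X,R)$ to be doubling: this is immediate from the characterization in Theorem \ref{xbf}, and can also be seen directly since the doubling property is a quasisymmetric invariant and every Ahlfors regular space is doubling. In a doubling metric space every bounded set is totally bounded, so completeness makes closed bounded sets compact; thus $(X,R)$ is proper, and in particular locally compact and separable. For the measure, $\mu\in\Mc{M}_{(X,R)}$ means $0<\mu(B_R(x,r))<\infty$ for all $x$ and $r>0$, so $\mu$ is locally finite with full support, and on a proper (hence $\sigma$-compact) space any locally finite Borel measure is Radon; the same inequality is exactly the volume doubling hypothesis of Theorem 10.4. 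Finally, regularity of the resistance form, i.e.\ density of $\Mc{F}\cap C_0(X)$ in $(C_0(X),\|\cdot\|_\infty)$, follows from local compactness of $(X,R)$ by the general theory of \cite{Kig12}, using that $\Mc{F}$ contains $1_X$, separates points, and is stable under the unit contraction \eqref{RF4}. These verifications together constitute Proposition \ref{pre}.

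Granting the hypotheses, parts (1) and (2) are exactly \cite[Theorem 9.4]{Kig12}: that theorem yields completeness of $(\Mc{F}\cap L^2(X,\mu),\Mc{E}_{\mu,1})$ and shows that the $\Mc{E}_{\mu,1}$-closure $\Mc{D}_\mu$ of $\Mc{F}\cap C_0(X)$ produces a regular Dirichlet form on $L^2(X,\mu)$ whose bilinear form is the restriction of $\Mc{E}$, so the identification $\Mc{E}_\mu=\Mc{E}|_{\Mc{D}_\mu\times\Mc{D}_\mu}$ is built into the construction. Part (3) follows from \cite[Theorem 10.4]{Kig12}, whose assumptions hold precisely because $\mu$ is volume doubling. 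Heuristically, the fundamental resistance estimate $|u(x)-u(y)|^2\le R(x,y)\Mc{E}(u,u)$ embeds $\Mc{F}$ continuously into $C(X)$, so the semigroup maps $L^2(X,\mu)$ into continuous functions and the volume doubling bounds upgrade this to a jointly continuous kernel $p_\mu(t,x,y)$.

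The step I expect to be the main obstacle is the geometric verification, namely extracting local compactness, separability and properness of $(X,R)$ from the hypothesis $\ard(X,R)<\infty$, and confirming regularity of the resistance form in the non-compact setting where compactness of $X$ is unavailable; once these are in place the conclusions reduce to direct citations of \cite{Kig12}.
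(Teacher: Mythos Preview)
Your overall strategy coincides with the paper's: verify the hypotheses of \cite[Theorems 9.4 and 10.4]{Kig12} and then cite those theorems. The paper indeed says Lemma~\ref{xhk} ``immediately follows'' from these theorems together with Proposition~\ref{pre}, and your treatment of the metric/measure side (completeness, doubling hence proper, separable, locally compact; $\mu$ Radon with full support and volume doubling) is correct.

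Where you diverge from the paper is in the argument for regularity of the resistance form. You suggest it follows from local compactness together with the resistance-form axioms ($1_X\in\Mc{F}$, separation of points, stability under the unit contraction). These ingredients by themselves do not suffice in the non-compact case: they are just \eqref{RF1}, \eqref{RF3} and \eqref{RF4}, and do not produce any nonconstant element of $\Mc{F}\cap C_0(X)$, so a Stone--Weierstrass type appeal has nothing to work with. The paper's Proposition~\ref{pre} instead constructs explicit cutoff functions. Using the annular resistance bounds of \cite[Lemma 7.10]{Kig12}, one takes the optimal functions $f_k$ for $\Mc{R}(x, B(x,2^k)^c\cap\ol{B(x,2^{k+N})})$, shows via Lemma~\ref{inf} that $g_a=\inf_{k\ge a}f_k\in\Mc{F}$ with $g_a(x)=1$ and $g_a|_{B(x,2^a)^c}\equiv 0$, and then invokes \cite[Theorem 6.3]{Kig12} to conclude regularity. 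The same construction simultaneously yields (ACC), which the paper notes is needed only later and not for Lemma~\ref{xhk} itself. So you have correctly located the obstacle, but the resolution you sketch is a hand-wave; the actual argument requires the annular resistance estimates and Lemma~\ref{inf}.
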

The main theorem of this paper is the following.
\begin{thm}\label{main}
	Let $\mu\in\Mc{M}_{(X,R)}.$ Then the limit 
	\begin{equation}\label{eqmain}
		\ol{d_s}(\mu,\Mc{E}_\mu,\Mc{D}_\mu):=\lim_{t\to\infty} \sup_{x\in X, s\in(0,\diam(X,R))}2\frac{\log\bigl(p_\mu(s/t,x,x)/p_\mu(s,x,x)\bigr)}{\log t}
	\end{equation}
exists and satisfies $\ard(X,R)\le\ol{d_s}(\mu,\Mc{E}_\mu,\Mc{D}_\mu)<2.$
\end{thm}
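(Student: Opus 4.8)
The plan is to reduce \eqref{eqmain} to a comparison of volume and resistance scales, then treat existence, the upper bound, and the lower bound in turn. By Lemma \ref{xhk} and Proposition \ref{pre} the hypotheses of \cite[Theorems 9.4 and 10.4]{Kig12} hold, so for every $\mu\in\Mc{M}_{(X,R)}$ the on-diagonal heat kernel obeys the resistance-form estimate $p_\mu(t,x,x)\asymp 1/V(x,r_t)$, where $V(x,r):=\mu(B_R(x,r))$ and the scale $r_t=r_t(x)$ is fixed by the Einstein-type relation $t\asymp r_t V(x,r_t)$ (``time $\asymp$ resistance $\times$ volume''). Inverting this, each ratio $p_\mu(s/t,x,x)/p_\mu(s,x,x)$ is comparable to $V(x,r_1)/V(x,r_2)$, where $s\asymp r_1V(x,r_1)$ and $s/t\asymp r_2V(x,r_2)$, so that $\log t=\log(r_1/r_2)+\log\!\bigl(V(x,r_1)/V(x,r_2)\bigr)+O(1)$. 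Thus \eqref{eqmain} measures, uniformly over $x$ and over pairs of scales $r_2<r_1\lesssim\diam(X,R)$, how the volume ratio competes with the resistance-scale ratio.

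For existence I would set $g(t):=\sup_{x,\,s\in(0,\diam(X,R))}\log\!\bigl(p_\mu(s/t,x,x)/p_\mu(s,x,x)\bigr)$ and prove the subadditivity $g(t_1t_2)\le g(t_1)+g(t_2)$: factoring through the intermediate time $s/t_2$ and taking logarithms splits the ratio into two factors, each bounded by the corresponding $g$, since $s/t_2$ again lies in $(0,\diam(X,R))$. Fekete's subadditivity lemma applied to $\tau\mapsto g(e^\tau)$ then shows $\lim_{t\to\infty}2g(t)/\log t=2\inf_{t>1}g(t)/\log t$ exists and equals $\ol{d_s}(\mu,\Mc{E}_\mu,\Mc{D}_\mu)$. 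The upper bound $\ol{d_s}<2$ uses only volume doubling: if $\mu$ is $\vd_R$ with constant $C$ then $V(x,r_1)/V(x,r_2)\le C(r_1/r_2)^{D}$ with $D:=\log_2C$, so $\log(V\text{-ratio})\le D\log(r_1/r_2)+O(1)$; substituting into $2\log(V\text{-ratio})/\log t$ and letting $t\to\infty$ (which forces the scales to separate and absorbs the $O(1)$) gives $\ol{d_s}\le 2D/(D+1)<2$.

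The lower bound $\ard(X,R)\le\ol{d_s}$ is the crux, and I expect the main difficulty here. Under Assumption \ref{ass} there is a partition $K$ of $(X,R)$ satisfying the basic framework (Theorem \ref{xbf}), carrying the lower $2$-spectral dimension $\ul{d}^s_2(K)$ of Definition \ref{pspec}, which is intrinsic to $(\Mc{E,F})$ and does not depend on $\mu$. The goal is the measure-free comparison $\ol{d_s}(\mu,\Mc{E}_\mu,\Mc{D}_\mu)\ge\ul{d}^s_2(K)$ for every $\mu\in\Mc{M}_{(X,R)}$. I would prove it by evaluating the heat-kernel ratio along the cells of $K$: a level-$m$ cell $K(w)$ has a resistance scale and a $\mu$-volume attached to it, and picking $x\in K(w)$ together with the two times $s,s/t$ bracketing consecutive levels turns $\sup_{x,s}$ into a supremum over cells and levels of exactly the resistance-versus-volume exponent defining $\ul{d}^s_2(K)$; the resistance scaling is $\mu$-independent, while the measure enters only through the volumes, whose contribution is precisely what the $2$-spectral dimension discards. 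The obstacle is to make these comparisons \emph{uniform}: the scale-separation $t$ in \eqref{eqmain} is fixed first, whereas the partition supplies only discrete scales, so one must match the two---choosing, for each $t$, a sufficiently deep level and a cell nearly realizing $\ul{d}^s_2(K)$---and do so for an arbitrary doubling measure rather than for one adapted to $K$.

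Finally I would assemble the estimates without circularity. From $\ol{d_s}<2$ and $\ol{d_s}\ge\ul{d}^s_2(K)$ we get $\ul{d}^s_2(K)<2$. If $\ard(X,R)\ge2$, then $p=2\le\ard(X,R)$ and Theorem \ref{Kmain}(2) would give $\ul{d}^s_2(K)\ge 2$, a contradiction; hence $\ard(X,R)<2$. Now $p=2>\ard(X,R)$, so Theorem \ref{Kmain}(1) yields $\ul{d}^s_2(K)\ge\ard(X,R)$, and therefore $\ard(X,R)\le\ul{d}^s_2(K)\le\ol{d_s}(\mu,\Mc{E}_\mu,\Mc{D}_\mu)<2$, as required.
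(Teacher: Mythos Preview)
Your existence argument via subadditivity and Fekete's lemma, and your upper bound $\ol{d_s}<2$ from volume doubling, are both correct and coincide with what the paper does (Proposition~\ref{dsvh} and equation~\eqref{eqds}). Your final assembly---deducing $\ard(X,R)<2$ by contraposition from Theorem~\ref{Kmain}(2) and then invoking Theorem~\ref{Kmain}(1)---is also sound, and in fact spells out a step the paper leaves implicit.

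The gap is in the lower bound. You describe $\ul{d}^s_2(K)$ as ``the resistance-versus-volume exponent'' one can read off from the cells, but that is not what it is: by Definition~\ref{pspec}, $\ul{d}^s_2(K)$ and $\ol{d}^s_2(K)$ are built from the \emph{combinatorial} energies $\Mc{E}_{2,k,w}$ on the abstract graphs $(T_n,E_n)$ with unit edge weights, together with the branching number $N_*$. Neither the resistance form $\Mc{E}$ nor any measure $\mu$ appears in that definition. The entire content of the lower bound is to bridge this combinatorial object and the analytic one, and the paper does so in three separate steps: Proposition~\ref{dsvh} rewrites $\ol{d_s}$ purely in terms of volume ratios (giving \eqref{eqds}); Proposition~\ref{nstar}(1) shows $\sup_{x,r}V_\mu(x,r)/V_\mu(x,\zeta^k r)\gtrsim N_*^k$ for \emph{every} doubling $\mu$, by counting descendants in the tree; and Proposition~\ref{e2kw}(1) shows $\Mc{E}_{2,k,w}\lesssim\zeta^k$, which is the genuine comparison between the graph resistance and the resistance form. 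This last step is not soft: it routes a unit flow on $(T_{[w]+k},E_{[w]+k})$ through short-range flows in $(\Mc{E}|_{V_n},\ell(V_n))$ using the localized estimate of Proposition~\ref{PropCo}, and then invokes Lemma~\ref{resaw}. Your sketch does not isolate either of these ingredients; in particular the phrase ``whose contribution is precisely what the $2$-spectral dimension discards'' is backwards, since $N_*$ sits in the denominator of \eqref{uds} and its comparison with the volume is exactly what Proposition~\ref{nstar}(1) supplies. Note also that the paper actually proves the stronger inequality $\ol{d}^s_2(K)\le\ol{d_s}$ (Theorem~\ref{main2}), not merely the $\ul{d}^s_2(K)$ version you target, though for Theorem~\ref{main} itself either suffices.
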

Note that if $\diam(X,R)<\infty$ then $\diam(X,R)$ in the right-hand side of \eqref{eqmain} can be replaced by $1$ because of Lemma \ref{exp} (5) and (6).\par
The following theorem is needed to prove Theorem \ref{main}, and it characterizes $\ol{d}^s_2$ if $(\Mc{E,F})$ is local. Recall from \cite[Definition 7.5]{Kig12} that $(\Mc{E,F})$ is said to be \emph{local} if $\Mc{E}(u,v)=0$ whenever $u,v\in\Mc{F}$ and $\inf_{x,y\in X,u(x)v(y)\ne 0}R(x,y)>0$ (see also Definition \ref{local}). We also recall that $(X,R)$ has a partition satisfying the basic framework by Theorem \ref{xbf}.
\begin{thm}\label{main2}
	Let $K$ be a partition of $(X,R)$ satisfying the basic framework. Then $\ol{d}^s_2(K)\le \ol{d_s}(\mu,\Mc{E}_\mu,\Mc{D}_\mu)$ for any $\mu\in\Mc{M}_{(X,R)}.$ Moreover,  if $(\Mc{E,F})$ is local then $\inf_{\mu\in\Mc{M}_{(X,R)}}\ol{d_s}(\mu,\Mc{E}_\mu,\Mc{D}_\mu)=\ol{d}^s_2(K).$
\end{thm}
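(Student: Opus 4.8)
The plan is to bridge the analytic quantity $\ol{d_s}(\mu,\Mc{E}_\mu,\Mc{D}_\mu)$ and the combinatorial quantity $\ol{d}^s_2(K)$ by the on-diagonal heat kernel estimate available for resistance forms. For any $\mu\in\Mc{M}_{(X,R)}$ the same results of \cite[Theorems 9.4 and 10.4]{Kig12} that are used through Proposition \ref{pre} and Lemma \ref{xhk} also furnish the two-sided estimate
\[
p_\mu(t,x,x)\asymp\frac{1}{\mu(B_R(x,r))},\qquad t\asymp r\,\mu(B_R(x,r)),
\]
the second relation being the resistance (Einstein-type) identity that the exit time from a resistance ball is comparable to (resistance)$\times$(volume)$\,\asymp r\,\mu(B_R(x,r))$. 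Substituting this into \eqref{eqmain}, the multiplicative constants contribute only an additive $O(1)$ to the numerator and hence disappear after dividing by $\log t$; this identifies $\ol{d_s}(\mu,\Mc{E}_\mu,\Mc{D}_\mu)$ with the worst-case (over $x$ and over pairs of scales) growth exponent of $\mu(B_R(x,\cdot))$ measured against the scaling $t\asymp r\,\mu(B_R(x,r))$.

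I would first check that the limit in \eqref{eqmain} exists. Writing $g(u)=\sup_{x\in X,\,s\in(0,\diam(X,R))}2\log\bigl(p_\mu(se^{-u},x,x)/p_\mu(s,x,x)\bigr)$, the monotonicity of $t\mapsto p_\mu(t,x,x)$ gives $g\ge 0$, and inserting an intermediate reference time $se^{-u_1}\in(0,\diam(X,R))$ into the ratio yields the subadditivity $g(u_1+u_2)\le g(u_1)+g(u_2)$; since the heat kernel estimate and the doubling property make $g(u)=O(u)$ uniformly, Fekete's lemma shows that $g(u)/u$ converges as $u\to\infty$, which is exactly the limit \eqref{eqmain}.

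For the inequality $\ol{d}^s_2(K)\le\ol{d_s}(\mu,\Mc{E}_\mu,\Mc{D}_\mu)$ I would pass from balls to the cells of the partition. By Theorem \ref{xbf} and Definition \ref{bf} each cell $K(w)$ is comparable to a resistance ball, so in the reformulation above one may replace $r$ by the resistance diameter of $K(w)$ and $\mu(B_R(x,r))$ by $\mu(K(w))$; thus $\ol{d_s}(\mu,\Mc{E}_\mu,\Mc{D}_\mu)$ equals the worst-case growth of (resistance diameter)$\times\,\mu$-mass along the tree of cells. The exponent $\ol{d}^s_2(K)$ of Definition \ref{pspec} is built from the combinatorial $2$-conductances of the approximating graphs and the resistance scale alone; since it is of infimum (modulus) type while a fixed $\mu$ only provides one feasible weighting of the cells, translating through the Einstein relation shows that the combinatorial conductance growth bounds the volume--resistance growth of $\mu$ from below. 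This gives $\ol{d}^s_2(K)\le\ol{d_s}(\mu,\Mc{E}_\mu,\Mc{D}_\mu)$ for every $\mu\in\Mc{M}_{(X,R)}$.

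The previous inequality already yields $\ol{d}^s_2(K)\le\inf_{\mu}\ol{d_s}(\mu,\Mc{E}_\mu,\Mc{D}_\mu)$, so for the local case it remains to produce, for each $\varepsilon>0$, a measure $\mu_\varepsilon\in\Mc{M}_{(X,R)}$ with $\ol{d_s}(\mu_\varepsilon,\Mc{E}_{\mu_\varepsilon},\Mc{D}_{\mu_\varepsilon})\le\ol{d}^s_2(K)+\varepsilon$. I would distribute $\mu_\varepsilon$ over the cells according to a near-optimal weighting for the combinatorial problem defining $\ol{d}^s_2(K)$, verify that $\mu_\varepsilon$ is volume doubling, and evaluate $\ol{d_s}(\mu_\varepsilon,\dots)$ through the heat kernel estimate. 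I expect this to be the main obstacle: one must show that, when $(\Mc{E,F})$ is local, the analytic effective resistance between adjacent cells agrees up to uniform constants with the combinatorial $2$-conductance entering $\ol{d}^s_2(K)$, so that the scaling $t\asymp r\,\mu_\varepsilon(B_R(x,r))$ reproduces the combinatorial exponent uniformly over all levels and basepoints and the additive $O(1)$ errors stay negligible in the limit. For non-local forms this matching can fail, since jumps between non-adjacent cells provide shortcuts that lower the effective resistance, which is consistent with the strict inequalities discussed in the introduction.
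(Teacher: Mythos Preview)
Your reduction of $\ol{d_s}(\mu,\Mc{E}_\mu,\Mc{D}_\mu)$ to a volume--resistance growth exponent via the on-diagonal heat kernel estimate, and the Fekete argument for existence of the limit, match the paper (Proposition~\ref{dsvh} and Lemma~\ref{sub}). The gap is in how you compare that growth exponent with $\ol{d}^s_2(K)$. The paper does \emph{not} compare them directly; it routes both through the intermediate quantity
\[
\frac{2}{1+\dfrac{\log\zeta^{-1}}{\log N_*}},\qquad N_*=\lim_{k\to\infty}\Bigl(\sup_{w\in T}\#\acute\pi^{-k}(w)\Bigr)^{1/k},
\]
and proves four independent estimates. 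For the inequality $\ol{d}^s_2(K)\le\ol{d_s}$: (i) Proposition~\ref{e2kw}(1) shows $\Mc{E}_{2,k,w}\lesssim\zeta^k$ by a \emph{flow} argument on the resistance form (constructing a graph flow out of optimal flows for short-range resistances and invoking Proposition~\ref{PropCo}); this bounds $\ol{d}^s_2(K)$ above by the displayed quantity. (ii) Proposition~\ref{nstar}(1) shows $\sup_{x,r}V_\mu(x,r)/V_\mu(x,\zeta^k r)\gtrsim N_*^k$ for every $\mu$, by counting disjoint balls inside a cell; via \eqref{eqds} this bounds $\ol{d_s}$ below by the same quantity. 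Your sentence ``since it is of infimum (modulus) type while a fixed $\mu$ only provides one feasible weighting'' does not correspond to either step: the bound on $\Mc{E}_{2,k,w}$ has nothing to do with any measure, and the bound on the volume ratio has nothing to do with the combinatorial modulus.

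For the local case the paper again splits: (iii) Proposition~\ref{e2kw}(2) uses locality (through Proposition~\ref{Propwbd}, which makes the long-range trace weights $\sum_{D_n}(\mu_n)_{x,y}$ vanish) to prove the reverse bound $\Mc{E}_{2,k,w}\gtrsim\zeta^k$, turning \eqref{it} into an equality; (iv) Proposition~\ref{nstar}(2) constructs, for each $\epsilon>0$, a $\mu\in\Mc{M}_{(X,R)}$ with $\sup_{x,r}V_\mu(x,r)/V_\mu(x,\zeta^k r)\lesssim(N_*+\epsilon)^k$. This construction is purely combinatorial/Assouad-type (mass split according to branching numbers, with a small boost on an interior child), and neither uses locality nor any ``near-optimal weighting for the combinatorial problem defining $\ol{d}^s_2(K)$'' as you propose. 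So locality enters only in step (iii), to pin $\ol{d}^s_2(K)$ exactly at the $N_*$-value; the measure realising the infimum is the same whether or not the form is local. Your plan of building $\mu_\varepsilon$ from modulus-optimal weights would have to reprove from scratch that such a measure is volume doubling and that its growth exponent matches $\ol{d}^s_2(K)$, and it is not clear how to do that without first establishing $\Mc{E}_{2,k,w}\asymp\zeta^k$ anyway.
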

If $(\Mc{E,F})$ and $\mu$ are the standard resistance form and the standard measure on the Sierpi\'nski gasket or a generalized Sierpi\'nski carpet with $\ard(X,d)<2$ where $d$ is the Euclidean metric on $X,$ then $\ol{d_s}(\mu,\Mc{E}_\mu,\Mc{D}_\mu)$ coincides with $d_s(\mu,\Mc{E}_\mu,\Mc{D}_\mu).$ Therefore in these cases Theorem \ref{main} yields $\ard(X,d)\le d_s(\mu,\Mc{E}_\mu,\Mc{D}_\mu)<2$ because $d\qs R,$ which recovers the result obtained in \cite{Kig20} as an application of Theorem \ref{Kmain} (1).\par
In general, $\ol{d_s}(\mu,\Mc{E}_\mu,\Mc{D}_\mu)$ does not coincide with $d_s(\mu,\Mc{E}_\mu,\Mc{D}_\mu)$ even if the latter exists. Moreover, the analog of the inequality in Theorem \ref{main} is false in general for the latter, as the following theorem states.
\begin{thm}\label{cex}
	There exist $X,$ $(\Mc{E,F})$ (satisfying Assumption \ref{ass}) and $\mu\in\Mc{M}_{(X,R)},$ such that $d_s(\mu,\Mc{E}_\mu,\Mc{D}_\mu)$ exists and \[d_s(\mu,\Mc{E}_\mu,\Mc{D}_\mu)<\ard(X,R)<2.\]
\end{thm}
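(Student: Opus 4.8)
The plan is to realize the required example as a spatially homogeneous but scale-irregular resistance fractal, built from two finite templates whose ``pure'' exponents differ in a controlled way, so that the genuine small-time limit defining $d_s$ is dictated by a soft template while the conformal dimension $\ard(X,R)$ is forced up by rare but arbitrarily long occurrences of a rigid template. Concretely, I would fix a soft template $A$ (e.g.\ the subdivision of an interval into a chain of cells, whose pure resistance-metric volume exponent is $1$) and a rigid template $B$ chosen to be a self-similar resistance form whose resistance metric has conformal dimension $\ard_B>1$ (for instance a diamond-type hierarchical lattice, where many parallel crossing routes at every scale make the combinatorial modulus, hence $\ard_B$, exceed $1$). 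I would then take a sequence $\omega\in\{A,B\}^{\Mb{N}}$ of template labels whose blocks of consecutive $B$'s have lengths tending to infinity while the asymptotic density of $B$ is $0$, and let $X$ be the inductive limit of the approximating networks subdivided according to $\omega$ (the non-self-similar construction of \cite{Sas1}), with $(\Mc{E,F})$ the associated resistance form and $\mu$ the natural uniform measure. Because only two templates occur, $(X,R)$ admits a partition satisfying the basic framework and $\ard(X,R)<\infty$ by Theorem \ref{xbf}, $R$ is complete, and $\mu$ is doubling; thus Assumption \ref{ass} holds, and $\ard(X,R)<2$ is then automatic from Theorem \ref{main}.

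Next I would verify that $d_s$ exists and is small. From the resistance-form on-diagonal estimate $p_\mu(t,x,x)\asymp V(x,r)^{-1}$ with $t\asymp r\,V(x,r)$ and $V(x,r)=\mu(B_R(x,r))$ (valid here by \cite[Theorems 9.4 and 10.4]{Kig12}; cf.\ Lemma \ref{xhk} and Proposition \ref{pre}), one gets $d_s=2\beta/(1+\beta)$ whenever the volume exponent $\beta=\lim_{r\downarrow0}\log V(x,r)/\log r$ exists and is independent of $x$. Since the geometry is homogeneous across space at each generation, $V(x,r)$ is comparable for all $x$, and because the $B$-blocks occupy a vanishing density of generations, their contributions to both $\log V$ and the time scaling are $o$ of the total; hence $\beta$ equals the pure-$A$ value (so $\beta=1$, $d_s=1$, for the interval template), the limit existing for every $x$. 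The crucial point is that the $B$-block \emph{lengths} nonetheless tend to infinity, so the fluctuation of $\log V(x,r)-\beta\log r$ is unbounded; this is exactly what makes $\ol{d_s}$ exceed $d_s$ in \eqref{eqmain}, leaving room for $\ard(X,R)$ to sit strictly above $d_s$ without contradicting Theorem \ref{main}.

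It then remains to bound $\ard(X,R)$ below by $\ard_B$. Along the scales of a $B$-block of length $L$, the rescaled space looks like $L$ generations of the pure-$B$ self-similar fractal; as $L\to\infty$ the pointed blow-ups converge to the unbounded pure-$B$ fractal, which therefore arises as a weak tangent of $(X,R)$. Since the conformal dimension of a space dominates that of any of its weak tangents, this yields $\ard(X,R)\ge\ard_B>1\ge d_s$. Alternatively one argues directly with the combinatorial $p$-modulus: for $p<\ard_B$ the crossing-curve modulus is amplified by a factor $>1$ per generation throughout each $B$-block, so it fails to decay along the block endpoints, and the lower $p$-spectral-dimension estimates of Theorem \ref{Kmain} force $\ard(X,R)\ge\ard_B$. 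Combining the three ingredients gives $d_s<\ard(X,R)<2$.

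I expect the main obstacle to be precisely this lower bound $\ard(X,R)\ge\ard_B$: upper bounds on a conformal dimension are comparatively soft, needing only one quasisymmetric Ahlfors regular metric, whereas a strict lower bound must exclude all Ahlfors regular metrics of smaller exponent quasisymmetric to $R$, and making the weak-tangent (or modulus) argument rigorous in the resistance-metric setting is delicate. A secondary but essential difficulty is the very existence of a rigid resistance template with $\ard_B>1$: every resistance metric is connected, so $\beta\ge1$ and hence $d_s\ge1$, and a counterexample is therefore possible only if such a $B$ exists; pinning down $\ard_B>1$ for a concrete $B$ (for instance by estimating the combinatorial modulus of the diamond lattice) is where the construction must ultimately be anchored.
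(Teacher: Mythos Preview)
Your high-level strategy matches the paper's: build a scale-inhomogeneous fractal from two templates, with the ``rigid'' template occurring along blocks whose lengths tend to infinity but whose density of scales tends to zero, so that $d_s$ is governed by the soft template while $\ard$ is forced up by the rigid one. The paper implements this with the Vicsek set (soft) and the Sierpi\'nski carpet (rigid) as templates, both acting on the $3\times 3$ subdivision of the unit square, with the carpet used at scales $n$ in $(k^2(k-1),k^3]$ for $k\in\Mb{N}$.

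The substantive difference is in how the hard lower bound $\ard(X,R)>d_s$ is obtained. The paper does \emph{not} run a weak-tangent or modulus argument directly in the resistance metric. Instead it (i) constructs the resistance form as a limit of rescaled graph energies using the Barlow--Bass carpet estimates (Lemma~\ref{resnk}) together with a Kasue-type compactness result (Proposition~\ref{finlim}, Corollary~\ref{lim}); (ii) proves that the associated resistance metric $R$ is quasisymmetric to the Euclidean metric $d$ (Lemma~\ref{evres} and Theorem~\ref{cres}); and (iii) thereby reduces the problem to the Euclidean side, invoking the separately proved equality $\ard(X,d)=\ard(\Mr{SC},d)$ from \cite{Sas2} and the classical bound $\ard(\Mr{SC},d)\ge 1+\log 2/\log 3>1.5$ from \cite{Tys}. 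Since the Vicsek spectral dimension $2\log 5/(\log 3+\log 5)<1.5$, the inequality follows. So the paper's key manoeuvre is to transfer the conformal-dimension question to the Euclidean metric via $R\qs d$, where the answer is already known.

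Your proposal, by contrast, would have to establish two things from scratch: that your chosen diamond template $B$ has $\ard_B>1$ in its resistance metric, and that arbitrarily long $B$-blocks force $\ard(X,R)\ge\ard_B$. The first is genuinely delicate --- many diamond/Laakso-type resistance spaces have conformal dimension equal to $1$ despite large Hausdorff dimension, precisely because one can collapse the parallel branches quasisymmetrically; you would need to pick $B$ carefully and prove the bound. The second (your weak-tangent argument) is in spirit what \cite{Sas2} does for the carpet via combinatorial modulus, but carrying it out in the resistance metric for a new template is real work. Both gaps are exactly the ones you flag as obstacles; the paper's template choices are designed so that these obstacles are replaced by citations.
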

We briefly describe the set $X$ on which we will construct the example of Theorem \ref{cex}. Following a particular rule, we use either the cell subdivision rule of $\Mr{SC}$ or that of the Vicsek set (that is, the unique nonempty compact subset $\Mr{VS}$ of $\Mb{C}$ with $\Mr{VS}=\cup_{j=1,3,5,7}\varphi_j(\Mr{VS})\Cup\frac{1}{3}\Mr{VS}$) for each scale, and obtain $X$ as the resulting set (see Section \ref{seccex} for details). $X$ has the full symmetry of the unit square, but is not exactly self-similar (see Figure \ref{FigCex}). In this example, we also show that the resistance metric $R$ associated with $(\Mc{E,F})$ is quasisymmetric to the Euclidean metric on $X$ (see Theorem \ref{cres}).\par
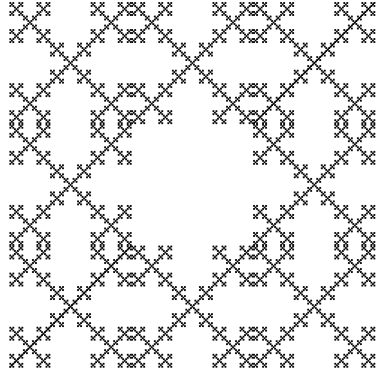
\begin{figure}[tb]
	\centering
	\begin{tikzpicture}[scale=0.03]
				\foreach \a/\A in {54/0, 54/54,0/54, -54/54,-54/0, -54/-54, 0/-54,54/-54}{
						\foreach \b / \B in {0/0, 18/18, -18/18, -18/-18, 18/-18}{		
								\foreach \c / \C in {0/0, 6/6, -6/6, -6/-6, 6/-6}{	
										\foreach \d /\D in {0/0, 2/2, -2/2, -2/-2, 2/-2}{
												\foreach \e/\E in {0/0, {2/3}/{2/3},{-2/3}/{2/3},{-2/3}/{-2/3},{2/3}/{-2/3}}{
													\draw[very thin] (\a+\b+\c+\d+\e+1/3,\A+\B+\C+\D+\E+1/3) -- (\a+\b+\c+\d+\e-1/3,\A+\B+\C+\D+\E-1/3) ;
													\draw[very thin] (\a+\b+\c+\d+\e+1/3,\A+\B+\C+\D+\E-1/3) -- (\a+\b+\c+\d+\e-1/3,\A+\B+\C+\D+\E+1/3) ;
								}}}}}
		\draw[very thin] (-81,-81)--(-27,-27);
		\draw[very thin] (27,27)--(81,81);
	\end{tikzpicture}
	\caption{Example of Theorem \ref{cex}.}
	\label{FigCex}
\end{figure}
The structure of this paper is as follows. Section \ref{secres} is devoted to proving inequalities of resistances used in the later sections. In Section \ref{secpart} we introduce the precise definition of a partition satisfying the basic framework and show related inequalities. We prove Theorems \ref{main} and \ref{main2} in Section \ref{secpf}, and Theorem \ref{cex} in Section \ref{seccex}. Appendix \ref{ApLoc} is devoted to proving the equivalence between different formulations of the  local property of a resistance form.

\subsection{Notation}\label{secpre}
Throughout this paper, we use the following notation.
\begin{itemize}
	\item The letter $\#$ denotes the cardinality of sets, and $\Mf{P}$ denotes the power set of sets. 
	\item For a set $X,$ we denote by $\ell(X)$ the set of all $\Mb{R}$-valued functions on $X.$
	 \item $a\vee b=\max\{a,b\}$ and $a\wedge b=\min\{a,b\}$ for $a,b\in\Mb{R}$ (or $\Mb{R}$-valued functions).
	 \item By abuse of notation, we write $x$ instead of $\{x\}$ if no confusion can arise. For example, we write $f^{-1}(x)$ instead of $f^{-1}(\{x\}).$
	 \item For a set $X$ and $A\subset X,$ we write $A^c$ instead of $X\setminus A$ if the whole set $X$ is obvious.
	 \item Let $(X,d)$ be a metric space. For $A\subset X,$ we will denote by $\nai(A)$ the interior of $A$ and by $\ol{A}$ the closure of $A$. Moreover, for any Borel measure $\mu$ on $X,$ we write $V_{d,\mu}(x,r)=\mu(B_d(x,r))$ (where $B_d(x,r)=\{y\in X\!\mid\! d(x,y)<r\}$). We will omit subscripts of $V_{d,\mu}(x,r)$ and $B_d(x,r)$ if the metric and/or measure is obvious.
	 \item Let $X$ be a set and $f:X\to X$ be a map, then we set $f^k:=\overbrace{f\circ\cdots\circ f}^{k}$ for $k>0$ and $f^0:=\Mr{id}_X.$ Moreover, $f^{-k}$ denotes $(f^k)^{-1}$ for $k>0.$
	\item Let $f,g$ be functions on a set $X$ and $A\subset X.$ We say $f(x)\lesssim g(x)$ (resp. $f(x)\gtrsim g(x)$) for any $x\in A$ if there exists $C>0$ such that $f(x)\le Cg(x)$ (resp. $f(x)\ge Cg(x)$) for any $x\in A.$ We also write $f(x)\asymp g(x)$ (for any $x$) if $f(x)\lesssim g(x)$ and $f(x)\gtrsim g(x).$ Note that we will not use this notation when we want to stress the constant $C.$ 
	\item Let $f$ be a function or variable defined by some type of maximum or minimum over a set of functions. Then we say $g$ is the optimal function for $f$ if $g$ attains the maximum or minimum. For example, let $R$ be the resistance metric associated with $(\Mc{E,F}),$ then the optimal function $u$ for $R(x,y)$ is such that $u\in \Mc{F},$ $u(x)=1,$ $u(y)=0$ and $\Mc{E}(u,u)=R(x,y)^{-1}.$
\end{itemize}
\section{Resistance forms}\label{secres}
In this section, we prove some properties of resistance forms and associated heat kernels, which we will use in the proof of Theorem \ref{main} and related statements. We first note the difference between two types of resistances between subsets. Throughout the rest of this paper, $(\Mc{E,F})$ denotes a resistance form on a set $X$ and $R$ denotes the associated resistance metric.
\begin{lem} Let $A,B\subset X$ be nonempty. If 
	\[\Mc{F}_{A,B}:=\{u\in\Mc{F}\mid u|_A\equiv 1,\ u|_B\equiv 0\}\ne\emptyset,\]
	then $\min_{u\in F_{A,B}}\Mc{E}(u,u)$ exists and $u\in F_{A,B}$ attaining the minimum is unique.
\end{lem}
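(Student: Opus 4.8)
The plan is to transfer the minimization to the Hilbert space $(\Mc{F}/\!\sim,\Mc{E})$ and invoke the projection theorem onto a closed convex set. First I would fix $a\in A$ and $b\in B$, which exist because $A$ and $B$ are nonempty; every $u\in\Mc{F}_{A,B}$ then satisfies $u(a)=1$ and $u(b)=0$. Because the value at $a$ is prescribed, the quotient map $\Mc{F}\to\Mc{F}/\!\sim$ restricts to an injection on $\Mc{F}_{A,B}$: if $u,v\in\Mc{F}_{A,B}$ are equivalent then $u-v$ is a constant vanishing at $a$, so $u=v$. Since $\Mc{F}$ is linear and the defining constraints are preserved by convex combinations, $\Mc{F}_{A,B}$ is convex, and hence so is its image $C\subset\Mc{F}/\!\sim$. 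As $\Mc{E}(u,u)$ depends only on the class of $u$, minimizing $\Mc{E}(u,u)$ over $\Mc{F}_{A,B}$ coincides with minimizing $\|\cdot\|_{\Mc{E}}^2$ over $C$.

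The key analytic input is the pointwise bound $|u(x)-u(y)|^2\le R(x,y)\,\Mc{E}(u,u)$ for $u\in\Mc{F}$, which I would derive from \eqref{RF3} by normalizing $u$ to take the values $1$ at $x$ and $0$ at $y$ (using that $\Mc{F}$ contains the constants by \eqref{RF1}). I would use this to show $C$ is closed. Take $u_n\in\Mc{F}_{A,B}$ whose classes converge in $\Mc{F}/\!\sim$; the classes are then Cauchy, so $\Mc{E}(u_n-u_m,u_n-u_m)\to0$. Applying the bound to $u_n-u_m$ at the pair $(x,a)$ and using $u_n(a)=u_m(a)=1$ gives $|u_n(x)-u_m(x)|^2\le R(x,a)\,\Mc{E}(u_n-u_m,u_n-u_m)\to0$, so $u_n$ converges pointwise to some $u_\infty$ with $u_\infty|_A\equiv1$ and $u_\infty|_B\equiv0$. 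Choosing a representative $v$ of the $\Mc{E}$-limit and running the same estimate for $u_n-v$ shows that $u_\infty$ equals $v$ plus a constant; hence $u_\infty\in\Mc{F}$, $u_\infty\in\Mc{F}_{A,B}$, and the class of $u_\infty$ is the given limit, proving $C$ closed.

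With $C$ a nonempty closed convex subset of the Hilbert space $\Mc{F}/\!\sim$, the projection theorem provides a unique element of $C$ of minimal norm, that is, a unique class minimizing $\Mc{E}(u,u)$. Pulling this back through the injection of $\Mc{F}_{A,B}$ onto $C$ yields both the existence of $\min_{u\in\Mc{F}_{A,B}}\Mc{E}(u,u)$ and the uniqueness of the minimizer inside $\Mc{F}_{A,B}$.

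I expect the closedness of $C$ to be the main obstacle: convexity and the projection step are routine, but ensuring that the boundary conditions $u|_A\equiv1$ and $u|_B\equiv0$ survive passage to the $\Mc{E}$-limit forces one to upgrade abstract Hilbert-space convergence to genuine pointwise convergence, which is exactly what the resistance estimate supplies. The remaining verifications are bookkeeping.
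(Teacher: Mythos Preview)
Your proof is correct and follows essentially the same approach as the paper: show that $\Mc{F}_{A,B}$ corresponds to a nonempty closed convex subset of the Hilbert space coming from \eqref{RF2}, using the resistance estimate from \eqref{RF3} to upgrade $\Mc{E}$-convergence to pointwise convergence, and then invoke the minimum-norm element in a closed convex set. The only cosmetic difference is that the paper works directly in the concrete realization $(\{u\in\Mc{F}\mid u(x)=0\},\Mc{E})$ for a fixed $x\in B$, inside which $\Mc{F}_{A,B}$ already sits, thereby avoiding the pass to the quotient and back.
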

\begin{proof}
	Fix any $x\in B,$ then we have
	\[|u(y)-v(y)|=|(u-v)(x)-(u-v)(y)|\le \Mc{E}(u-v,u-v)^{1/2}R(x,y)^{1/2}\]  
	for any $y\in X$ and $u,v\in\Mc{F}$ with $u(x)=v(x)=0.$ This shows that $\Mc{F}_{A,B}$ is a closed convex subset of the Hilbert space $(\{u\in\Mc{F}\mid u(x)=0\},\Mc{E})$ and the claim follows.
\end{proof}
\begin{defi}[Resistance between sets]
Let $\Mc{R}(A,B)$ denote the reciprocal of $\min_{u\in F_{A,B}}\Mc{E}(u,u)$ if $\Mc{F}_{A,B}\ne\emptyset$ and $0$ if $\Mc{F}_{A,B}=\emptyset.$ We also define $\Mc{R}(A,B)=\infty$ if $A=\emptyset$ or $B=\emptyset$ for ease of notation. We call $\Mc{R}(A,B)$ the \emph{resistance between sets} $A$ and $B$, associated with $(\Mc{E,F}).$
\end{defi}
On the other hand, we use the notation $R(A,B)$ for the resistance metric between sets, that is, $R(A,B)=\inf_{x\in A, y\in B}R(x,y).$ Note that $R(x,y)=\Mc{R}(\{x\},\{y\})$ for any $x,y\in X$ and $\Mc{R}(A,B)\le R(A,B)$ for any $A,B\subset X$ but $R(A,B)\ne\Mc{R}(A,B)$ in general. Throughout this paper $\Mc{R}$ denotes the resistance between sets associated with $(\Mc{E,F}),$ and more generally, the letter $R$ is used for resistance metrics (``$R_n$'' for example) and $\Mc{R}$ is used for resistances between sets (``$\Mc{R}_n$'' for example).\par
Our next aim is to prove Lemma \ref{xhk}. For this purpose, we first introduce some notions of a metric space and a resistance form. 
\begin{defi}[Doubling, uniformly perfect] Let ($Y,\rho$) be a metric space.
	\begin{enumerate}
		\item $(Y,\rho)$ is called \textit{doubling} if there exists $N\in\Mb{N}$ such that for any $x\in Y$ and $r>0,$ there exist $\{x_i\}_{i=1}^N\subset Y$ with $B(x,2r) \subset \Cup_{i=0}^N B(x_i,r).$
		\item $(Y,\rho)$ is called \emph{uniformly perfect} if there exists $\gamma>1$ such that $B(x,\gamma r)\setminus B(x,r)\ne\emptyset$ whenever $B(x,r)\ne Y.$
	\end{enumerate}
\end{defi}
\begin{rem}
	It is easy to see that a doubling metric space is separable.
\end{rem}
\begin{defi}[regular]
	$(\Mc{E,F})$ is called \emph{regular} if $\Mc{F}\cap C_0(X,R)$ is dense in $C_0(X,R)$ with respect to the supremum norm. 
\end{defi}
\begin{defi}[Annulus comparable condition]
	We say that $(\Mc{E,F})$ satisfies the \emph{annulus comparable condition}, (ACC) for short, if there exists $\alpha>1$ such that
	$\Mc{R}(x,B(x,r)^c)\gtrsim \Mc{R}(x,B(x,r)^c\cap\ol{B(x,\alpha r)})$
	for any $x\in X$ and $r>0.$
\end{defi}
Note that the inverse direction of the above inequality immediately follows from the inclusion of sets. It is easy to see that if (ACC) holds then $(X,R)$ is uniformly perfect.\par
Hereafter, we make Assumption \ref{ass} to the end of Section \ref{secpf}. Note that by Theorem \ref{xbf}, both $R$ and $d$ are doubling and uniformly perfect.
\begin{prop}\label{pre}
$(\Mc{E,F})$ is regular and satisfies (ACC).
\end{prop}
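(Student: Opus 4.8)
The plan is to read both conclusions off the geometry furnished by Assumption \ref{ass}. Since $R$ is complete and, by Theorem \ref{xbf}, doubling, every bounded set is totally bounded, so its closure is compact; thus $(X,R)$ is proper (hence locally compact and separable) and, again by Theorem \ref{xbf}, uniformly perfect. These are exactly the geometric hypotheses needed to invoke \cite[Theorems 9.4 and 10.4]{Kig12}, so the whole task reduces to establishing regularity and (ACC) from properness and uniform perfectness.

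For regularity I would argue by Stone--Weierstrass. Every $u\in\Mc{F}$ is $R$-continuous because $|u(x)-u(y)|^2\le \Mc{E}(u,u)R(x,y)$, so $\Mc{A}:=\Mc{F}\cap C_0(X,R)$ is a linear subspace of $C_0(X,R)$, and it is in fact a subalgebra: the bounded elements of a resistance form obey a Leibniz-type energy estimate $\Mc{E}(uv,uv)^{1/2}\le \|u\|_\infty\Mc{E}(v,v)^{1/2}+\|v\|_\infty\Mc{E}(u,u)^{1/2}$, so $uv\in\Mc{F}$ for $u,v\in\Mc{A}$, while $\mathrm{supp}(uv)\subset\mathrm{supp}(u)$ is compact. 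It then remains to check that $\Mc{A}$ separates points and vanishes nowhere, for which it suffices to produce, for each $x$ and $\rho>0$, a cutoff $\psi\in\Mc{A}$ with $\psi(x)=1$ and $\mathrm{supp}(\psi)\subset\ol{B(x,\rho)}$: multiplying a $g\in\Mc{F}$ with $g(x)\ne g(y)$ (which exists by the third resistance-form axiom) by such a cutoff separates $x$ and $y$, and the cutoffs vanish nowhere. Here $\ol{B(x,\rho)}$ is compact by properness and $R(x,B(x,\rho)^c)\ge\rho>0$, so once one knows $\Mc{F}_{\{x\},B(x,\rho)^c}\ne\emptyset$, the minimizer lemma (existence of $\min_{u\in\Mc{F}_{A,B}}\Mc{E}(u,u)$) together with the truncation \eqref{RF4} delivers $\psi$. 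The existence of this minimizing set is the one genuinely technical point of this half, and it is precisely Kigami's regularity lemma for resistance forms with locally compact resistance metric, which I would cite rather than reprove.

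For (ACC) the inequality $\Mc{R}(x,B(x,r)^c)\le\Mc{R}\bigl(x,B(x,r)^c\cap\ol{B(x,\alpha r)}\bigr)$ is immediate from the set inclusion, so the content lies in the reverse comparison. I would take the optimal function $v$ for $\Mc{R}(x,\,B(x,r)^c\cap\ol{B(x,\alpha r)})$, so that $v(x)=1$, $0\le v\le1$, and $v$ vanishes on the annulus $\{y\mid r\le R(x,y)\le\alpha r\}$. Writing $B(x,r)^c$ as the union of this annulus and the far exterior $B(x,r)^c\setminus\ol{B(x,\alpha r)}$, the function $v$ already vanishes on the annulus, and the natural candidate for the $\Mc{R}(x,B(x,r)^c)$ problem is obtained by forcing $v$ to vanish on the far exterior as well; decomposing $v$ into its inner part supported in $B(x,r)$ and its outer part supported in $\{R(x,\cdot)>\alpha r\}$, the excess energy of this truncation is exactly the long-range interaction between the inner and outer regions.

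The hard part will be controlling this long-range term. If $(\Mc{E,F})$ is local it vanishes identically and (ACC) holds with comparison constant $1$; but the framework here explicitly allows jumps, in which case the far exterior communicates with $x$ directly and the annulus does not energetically shield it, so the truncation need neither lie in $\Mc{F}$ nor keep the energy comparable without extra input. This is where uniform perfectness must be used \emph{quantitatively}: because the annulus $\{r\le R(x,\cdot)\le\alpha r\}$ is nonempty and uniformly thick at every scale, one can bound the cross interaction by a fixed multiple of $\Mc{E}(v,v)$ for a suitable fixed $\alpha>1$ via resistance estimates across the annulus. Establishing this reduction of (ACC) to uniform perfectness for general (possibly non-local) resistance forms is the substantive step and the point at which I expect the main difficulty; I would carry it out along the lines of — or directly cite — Kigami's treatment of the annulus comparable condition in \cite{Kig12}.
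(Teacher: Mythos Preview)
Your outline defers exactly the two steps that carry all the content, and in both places the proposed citation does not exist in the form you need. For regularity, \cite[Theorem~6.3]{Kig12} takes the separation property ``for every closed $Y$ and $x\notin Y$ there is $f\in\Mc{F}$ with $f(x)=1$, $f|_Y\equiv0$'' as \emph{hypothesis} and returns regularity; it does not produce the cutoff $\psi$ for you from local compactness. For (ACC) the situation is the same: in \cite{Kig12} the annulus comparable condition is a standing assumption, not a consequence of uniform perfectness, so there is nothing to cite. Your own mechanism --- truncate the optimal annulus function $v$ to zero on the far exterior and bound the cross term --- does not work in the non-local case, and the fix you sketch (``the annulus is uniformly thick, so resistance estimates across it control the cross interaction'') is not a valid reduction: the cross term is the total jump intensity between $B(x,r)$ and $B(x,\alpha r)^c$, which the form may place arbitrarily large weight on regardless of how thick the intermediate annulus is. Uniform perfectness only guarantees the annulus is nonempty; it says nothing about jumps over it.

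The paper's argument avoids truncation altogether. From \cite[Lemma~7.10]{Kig12} one has $\Mc{R}(x,A_k)\asymp 2^k$ for the dyadic annuli $A_k=B(x,2^k)^c\cap\ol{B(x,2^{k+N})}$, so the optimal functions $f_k$ for $\Mc{R}(x,A_k)$ have $\sum_{k\ge a}\Mc{E}(f_k,f_k)\lesssim 2^{-a}$. Lemma~\ref{inf} (closure of $\Mc{F}$ under infima with summable energies) then gives $g_a:=\inf_{k\ge a}f_k\in\Mc{F}$ with $\Mc{E}(g_a,g_a)\lesssim 2^{-a}$; since $g_a(x)=1$ and $g_a$ vanishes on $\Cup_{k\ge a}A_k=B(x,2^a)^c$, one gets $\Mc{R}(x,B(x,2^a)^c)\gtrsim 2^a$ in one stroke. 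Combined with the upper bound on $\Mc{R}(x,A_a)$ this is (ACC), and the very same $g_a$ is the cutoff that, via \cite[Theorem~6.3]{Kig12}, yields regularity. The idea you are missing is: do not try to repair a single annulus function, take the infimum over \emph{all} larger scales --- the geometric decay of the energies makes Lemma~\ref{inf} applicable with no locality assumption whatsoever.
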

For the proof of this proposition, we introduce some results.
\begin{prop}[{\cite[Theorem 8.4]{Kig12}}]\label{Proptr}
	Let $Y$ be a nonempty subset of X. Define $	\Mc{F}|_Y=\{u|_Y\mid u\in\Mc{F} \}$ and  \begin{equation}
	\Mc{E}|_Y(u_*,u_*)=\inf\{\Mc{E}(u,u)\mid u\in\Mc{F},\ u_*=u|_Y\} \label{ey}
	\end{equation}
for any $u_*\in\Mc{F}|_Y,$  then the infimum of \eqref{ey} is attained. Moreover, there exists a unique extension of $\Mc{E}|_Y$ to $\Mc{F}|_Y\times \Mc{F}|_Y$ such that $(\Mc{E}|_Y, \Mc{F}|_Y)$ is a resistance form. In particular,  if $\#(Y)<\infty$ then $\Mc{F}|_Y=\ell(Y).$
\end{prop}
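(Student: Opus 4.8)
The plan is to realize the trace form through energy-minimizing (``harmonic'') extensions and then to check the five resistance-form axioms one at a time, reducing each to the corresponding property of $(\Mc{E},\Mc{F})$. The key object is a linear extension map $h$ built from a Hilbert-space projection.

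First I would fix a basepoint $p\in Y$ and work inside the Hilbert space $(\Mc{F}/\!\sim,\Mc{E})$ from \eqref{RF2}. Set $W=\{u\in\Mc{F}\mid u|_Y\equiv 0\}$, viewed as a subspace of $\Mc{F}/\!\sim$ (the assignment is injective, since a function vanishing on the nonempty set $Y$ determines its class). Using the a priori estimate $|u(y)|\le\Mc{E}(u,u)^{1/2}R(p,y)^{1/2}$ for $u(p)=0$ from the first lemma of this section, $\Mc{E}$-convergence forces pointwise convergence, so $W$ is a \emph{closed} subspace. For $u_*\in\Mc{F}|_Y$ pick any $u\in\Mc{F}$ with $u|_Y=u_*$ and let $h(u_*)$ be the orthogonal projection of its class onto $W^\perp$; then $h(u_*)$ is the unique minimizer in \eqref{ey} (so the infimum is attained), and $h$ is a well-defined linear isometry of $\Mc{F}|_Y/\!\sim$ onto $W^\perp$. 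Setting $\Mc{E}|_Y(u_*,v_*)=\Mc{E}(h(u_*),h(v_*))$ produces the required bilinear extension, and it is the only one, because any symmetric form is recovered from its diagonal \eqref{ey} by polarization.

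Next I would verify the axioms. Containment of $1_Y$ and the ``kernel $=$ constants'' part of \eqref{RF1} are immediate, since $h$ maps constants to constants and is an isometry; \eqref{RF2} holds because $W^\perp$ is a closed subspace of a Hilbert space, hence complete, and $h$ is an isometry onto it. The separation axiom is inherited directly from the one for $\Mc{F}$. For \eqref{RF3} I would record the identity $R_Y(x,y)=R(x,y)$ for $x,y\in Y$: minimizing $\Mc{E}|_Y(u_*,u_*)$ over $u_*\in\Mc{F}|_Y$ with $u_*(x)=1,\ u_*(y)=0$ unfolds, through \eqref{ey}, into minimizing $\Mc{E}(u,u)$ over all $u\in\Mc{F}$ with $u(x)=1,\ u(y)=0$, which equals $R(x,y)^{-1}<\infty$. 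Finally for \eqref{RF4}, given $u_*\in\Mc{F}|_Y$ I apply the unit contraction to its harmonic extension $w=h(u_*)\in\Mc{F}$: then $\hat w\in\Mc{F}$ and $\hat w|_Y=\widehat{u_*}$, so $\widehat{u_*}\in\Mc{F}|_Y$, and since $\Mc{E}|_Y(\widehat{u_*},\widehat{u_*})$ is an infimum over extensions of $\widehat{u_*}$ while $\hat w$ is one such, $\Mc{E}|_Y(\widehat{u_*},\widehat{u_*})\le\Mc{E}(\hat w,\hat w)\le\Mc{E}(w,w)=\Mc{E}|_Y(u_*,u_*)$.

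I expect the finite case $\#(Y)<\infty$ to be the main obstacle, since the argument above yields a resistance form without controlling $\dim\Mc{F}|_Y$. I would argue by contradiction: if $\Mc{F}|_Y\subsetneq\ell(Y)$, there are coefficients $(c_z)_{z\in Y}$, not all zero, with $\sum_{z}c_z u_*(z)=0$ for every $u_*\in\Mc{F}|_Y$; taking $u_*=1_Y$ gives $\sum_z c_z=0$. Applying the relation to the truncations $u_*\wedge t$ (which lie in $\Mc{F}|_Y$ by the already-established \eqref{RF4} together with linearity) and differentiating the resulting piecewise-linear identity in $t$ forces $\sum_{z\,:\,u_*(z)=s}c_z=0$ on every level set of every $u_*\in\Mc{F}|_Y$. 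Using the separation axiom and a generic linear combination of finitely many separating functions, I would produce a single $u_*\in\Mc{F}|_Y$ that is injective on $Y$, so each level set is a singleton and every $c_z=0$, a contradiction. Hence $\Mc{F}|_Y=\ell(Y)$.
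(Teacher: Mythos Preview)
The paper does not supply its own proof here: Proposition~\ref{Proptr} is quoted from \cite[Theorem~8.4]{Kig12}, with only a remark that the standard Hilbert-space argument goes through without separability or completeness of $R$. Your proposal is precisely such an argument, and it is correct. The orthogonal projection onto $W^\perp$ produces the harmonic extension, giving both attainment of the infimum in \eqref{ey} and the bilinear form via polarization; your verifications of \eqref{RF1}--\eqref{RF4} are sound, in particular the identity $R_Y=R|_{Y\times Y}$ for \eqref{RF3} and the Markov property by truncating the harmonic extension.

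Your treatment of the finite case is correct but heavier than needed. Once $(\Mc{E}|_Y,\Mc{F}|_Y)$ is known to be a resistance form, one can argue directly: for $x\in Y$ and each $y\in Y\setminus\{x\}$ pick $f_y\in\Mc{F}|_Y$ with $f_y(x)=1,\ f_y(y)=0$ from \eqref{RF3}, replace it by $\hat f_y$ so $0\le f_y\le 1$, and set $g=\min_{y\ne x}f_y$. Since $u\wedge v=v-((v-u)\vee 0)$ and everything is bounded on the finite set $Y$, repeated use of \eqref{RF4} with linearity gives $g\in\Mc{F}|_Y$; but $g=1_{\{x\}}$ on $Y$, so $\Mc{F}|_Y=\ell(Y)$. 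Your level-set argument via $t\mapsto\sum_z c_z(u_*(z)\wedge t)$ also works; the only step that deserves one more line is ``produce an injective $u_*$ by a generic linear combination'': the coefficient vectors failing to separate a fixed pair form a proper hyperplane, and finitely many hyperplanes cannot cover $\Mb{R}^N$.
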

\begin{defi}
$(\Mc{E}|_Y, \Mc{F}|_Y)$ is called the \textit{trace} of $\Mc{(E,F)}$ on $Y.$
\end{defi}
\begin{rem}
	In \cite[Theorem 8.4]{Kig12}, $R$ is assumed to be separable and complete, and it is so in our case. However, by the standard argument in Hilbert space theory, it is easy to show that Proposition \ref{Proptr} is also true for resistance forms whose associated resistance metric is not necessarily separable and complete (see \cite[Theorem 2.29]{Kaj}, for example).
\end{rem}
\begin{defi}
		We define the following terminologies for abbreviation. 
		\begin{enumerate}
			\item We say $\{V_n\}_{n\ge0}$ is a \emph{spread sequence} of a metric space $(Y,\rho)$ if it is an increasing sequence of nonempty finite subsets satisfying $\ol{\cup_{n\ge0}V_n}=Y.$
			\item Assume that $V$ is a finite set and  $(\Mc{E},\ell(V))$ is a resistance form on $V.$ We call $\mu=\{\mu_{x,y}\}_{x,y\in V} \subset \Mb{R}^{V\times V}$ the \emph{resistance weights} associated with $(\Mc{E},\ell(V))$ if 
			$\mu_{x,x}=-\sum_{z:z\ne x}\mu_{x,z}$ and $\mu_{x,y}=\mu_{y,x}$ for any $x,y\in V,$ and \[\Mc{E}(u,v)=\frac{1}{2}\sum_{x,y\in V}(u(x)-u(y))(v(x)-v(y))\mu_{x,y} \text{ for any } u,v\in\ell(V).\]
		\end{enumerate}
\end{defi}
Moreover, we write $\Mc{R}_Y(A,B)$ (resp. $R_Y(A,B)$) instead of $\Mc{R}_Y(A\cap Y,B\cap Y)$ (resp. $R_Y(A\cap Y,B\cap Y)$) for abbreviation, where $\Mc{R}_Y$ (resp. $R_Y$) is the resistance between sets (resp. resistance metric) associated with the trace of $(\Mc{E,F})$ on $Y.$
\begin{rem}
	By \cite[Proposition 2.1.3]{Kig01}, for any  $(\Mc{E},\ell(V)),$ the unique resistance weights associated with that exist. Moreover, $\mu_{x,y}\ge0$ for any $x,y\in V$ with $x \ne y$ and $\mu_{x,x}<0$ for any $x\in V$ by the same proposition and \eqref{RF1}.
\end{rem}
\begin{prop}[{\cite[Section 2.3]{Kig01}}]\label{comp}
	Assume that $\{V_n\}_{n\ge0}$ is a spread sequence, then
	\[\Mc{F}=\{u\mid u\in C(X,R),\ \lim_{n\to\infty}\Mc{E}|_{V_n}(u|_{V_n},u|_{V_n})<\infty\}.\]
	Moreover, $\Mc{E}(u,v)=\lim_{n\to\infty}\Mc{E}|_{V_n}(u|_{V_n},v|_{V_n})$ for any $u,v\in\Mc{F}.$
\end{prop}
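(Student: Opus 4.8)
The plan is to establish the set equality by a double inclusion and to read off the energy formula from the constructive half, working throughout in the Hilbert space $\Mc{H}:=(\{v\in\Mc{F}\mid v(x_0)=0\},\Mc{E})$ attached to a fixed base point $x_0\in V_0$, which is complete by \eqref{RF2}.

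For the inclusion $\Mc{F}\subset\{\,\cdots\,\}$ I would first note that any $u\in\Mc{F}$ is continuous on $(X,R)$, via the bound $|u(x)-u(y)|\le\Mc{E}(u,u)^{1/2}R(x,y)^{1/2}$ already used in this section. Because $V_n\subset V_{n+1}$, the infimum \eqref{ey} defining $\Mc{E}|_{V_{n+1}}$ ranges over a more constrained family of extensions than that defining $\Mc{E}|_{V_n}$, so $n\mapsto\Mc{E}|_{V_n}(u|_{V_n},u|_{V_n})$ is nondecreasing; taking $v=u$ in \eqref{ey} bounds it above by $\Mc{E}(u,u)$. Hence the limit exists and is finite, giving this inclusion.

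The substantive half is the reverse inclusion, which I would prove together with the energy identity. Take $u\in C(X,R)$ with $M:=\lim_n\Mc{E}|_{V_n}(u|_{V_n},u|_{V_n})<\infty$ and, after subtracting a constant, assume $u(x_0)=0$. By Proposition \ref{Proptr} the infimum in \eqref{ey} is attained, so I can choose $h_n\in\Mc{F}$ with $h_n|_{V_n}=u|_{V_n}$ and $\Mc{E}(h_n,h_n)=\Mc{E}|_{V_n}(u|_{V_n},u|_{V_n})$; since $h_n(x_0)=0$, each $h_n\in\Mc{H}$. The energy-minimality of $h_n$ makes it $\Mc{E}$-orthogonal to every function in $\Mc{F}$ vanishing on $V_n$. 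For $m>n$ the function $h_m-h_n$ vanishes on $V_n$, so $\Mc{E}(h_n,h_m-h_n)=0$ and the Pythagorean identity $\Mc{E}(h_m-h_n,h_m-h_n)=\Mc{E}(h_m,h_m)-\Mc{E}(h_n,h_n)$ holds. As $\{\Mc{E}(h_n,h_n)\}$ increases to $M<\infty$ it is Cauchy in $\Mb{R}$, so $\{h_n\}$ is Cauchy in $\Mc{H}$; by completeness it converges to some $h\in\Mc{F}$ with $\Mc{E}(h,h)=M$. Energy convergence yields pointwise convergence through $|h_n(x)-h(x)|^2\le\Mc{E}(h_n-h,h_n-h)R(x_0,x)$, so on the dense set $\cup_mV_m$ (where $h_n$ eventually equals $u$) I get $h=u$; continuity of both functions then upgrades this to $h=u$ on all of $X$. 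Thus $u=h\in\Mc{F}$ and $\Mc{E}(u,u)=M$, and the bilinear formula follows by polarization.

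I expect the Cauchy estimate for $\{h_n\}$ to be the main obstacle: it is exactly the orthogonality of the energy minimizer to functions supported off $V_n$ that converts the monotone convergence of trace energies into genuine Hilbert-space convergence, and without it one only obtains a bounded, not convergent, sequence. The secondary delicate point is the identification $h=u$, which uses both the continuity of $u$ and the density of $\cup_nV_n$ guaranteed by the spread-sequence hypothesis.
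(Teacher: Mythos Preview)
Your proof is correct and is essentially the standard argument from \cite[Section 2.3]{Kig01}: harmonic extensions $h_n$ of $u|_{V_n}$, orthogonality of the minimizer giving the Pythagorean identity $\Mc{E}(h_m-h_n,h_m-h_n)=\Mc{E}(h_m,h_m)-\Mc{E}(h_n,h_n)$, Cauchy convergence in the Hilbert space of \eqref{RF2}, and identification of the limit with $u$ via pointwise convergence on the dense set $\cup_nV_n$. The paper itself does not supply a proof of this proposition but simply cites \cite{Kig01}, so there is nothing further to compare.
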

\begin{rem}
	$\{\Mc{E}|_{V_n}(u|_{V_n},u|_{V_n})\}_{n\ge0}$ is an increasing sequence for any $u\in \Mc{F}$ by definition.
\end{rem}
\begin{lem}\label{inf}
	Let $\{f_n\}_{n\ge0}\subset\Mc{F}$ with $\sum_{n\ge0}\Mc{E}(f_n,f_n)<\infty.$
	\begin{enumerate}
		\item If $\sup_{n\ge0}f_{n}(x_*)<\infty$ for some $x_*\in X,$ then $\bar f:=\sup_{n\ge0}f_n\in\Mc{F}$ and $\Mc{E}(\bar f,\bar f)\le \sum_{n\ge0}\Mc{E}(f_n,f_n).$
		\item If $\inf_{n\ge0}f_{n}(x_*)>-\infty$ for some $x_*\in X,$ then $\ul f:=\inf_{n\ge0}f_n\in\Mc{F}$ and $\Mc{E}(\ul f,\ul f)\le \sum_{n\ge0}\Mc{E}(f_n,f_n).$
	\end{enumerate}
\end{lem}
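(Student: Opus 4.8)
The plan is to prove part (1) directly and then obtain part (2) by applying (1) to $\{-f_n\}_{n\ge0}$, since $\ul f=-\sup_{n\ge0}(-f_n)$, since $\Mc{E}(-u,-u)=\Mc{E}(u,u)$, and since $\inf_{n}f_n(x_*)>-\infty$ is exactly $\sup_n(-f_n)(x_*)<\infty$. Throughout I write $S:=\sum_{n\ge0}\Mc{E}(f_n,f_n)<\infty$ and, for $N\ge0$, set the partial maximum $g_N:=f_0\vee\cdots\vee f_N$. First I would record the elementary fact that $\max$ is $1$-Lipschitz, i.e. $|\max\{a,c\}-\max\{b,d\}|\le\max\{|a-b|,|c-d|\}$ for $a,b,c,d\in\Mb{R}$, whence $(\max\{a,c\}-\max\{b,d\})^2\le(a-b)^2+(c-d)^2$. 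Applying this to $g_N=g_{N-1}\vee f_N$ and iterating yields the pointwise bound $(g_N(x)-g_N(y))^2\le\sum_{n=0}^N(f_n(x)-f_n(y))^2$ for all $x,y\in X$.

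I would then fix a spread sequence $\{V_m\}_{m\ge0}$ of $(X,R)$, which exists because $(X,R)$ is doubling by Theorem \ref{xbf} and hence separable. Expressing $\Mc{E}|_{V_m}$ via its resistance weights, which are nonnegative off the diagonal, the pointwise bound (which only involves the squared differences $(u(x)-u(y))^2$) integrates to
\[\Mc{E}|_{V_m}(g_N|_{V_m},g_N|_{V_m})\le\sum_{n=0}^N\Mc{E}|_{V_m}(f_n|_{V_m},f_n|_{V_m})\le\sum_{n=0}^N\Mc{E}(f_n,f_n)\le S.\]
Since $g_N$ is a finite maximum of functions in $\Mc{F}$, and functions in $\Mc{F}$ are $R$-continuous by the fundamental resistance estimate $|u(x)-u(y)|^2\le\Mc{E}(u,u)R(x,y)$ used in the first lemma of this section, we have $g_N\in C(X,R)$; Proposition \ref{comp} then gives $g_N\in\Mc{F}$ with $\Mc{E}(g_N,g_N)\le S$.

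Next I would pass to the limit in $N$. The resistance estimate applied to $g_N$ gives $|g_N(x)-g_N(x_*)|\le S^{1/2}R(x,x_*)^{1/2}$ uniformly in $N$, and together with $f_0(x_*)\le g_N(x_*)\le\sup_n f_n(x_*)<\infty$ this shows $\{g_N(x)\}_N$ is bounded for each $x$; being nondecreasing it converges, so $\bar f(x):=\sup_n f_n(x)=\lim_N g_N(x)$ is finite everywhere. Letting $N\to\infty$ in the same estimate gives $|\bar f(x)-\bar f(y)|\le S^{1/2}R(x,y)^{1/2}$, so $\bar f\in C(X,R)$. Finally, for each fixed $m$ the convergence $g_N|_{V_m}\to\bar f|_{V_m}$ holds on the finite set $V_m$, and $\Mc{E}|_{V_m}$ is a continuous quadratic form on $\ell(V_m)=\Mc{F}|_{V_m}$, so $\Mc{E}|_{V_m}(\bar f|_{V_m},\bar f|_{V_m})=\lim_N\Mc{E}|_{V_m}(g_N|_{V_m},g_N|_{V_m})\le S$; letting $m\to\infty$ gives $\lim_m\Mc{E}|_{V_m}(\bar f|_{V_m},\bar f|_{V_m})\le S<\infty$, and Proposition \ref{comp} yields $\bar f\in\Mc{F}$ with $\Mc{E}(\bar f,\bar f)\le S$.

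I expect the only genuinely delicate point to be establishing the finiteness and $R$-continuity of $\bar f$. Proposition \ref{comp} characterizes $\Mc{F}$ only among continuous functions, so without the uniform resistance bound one could not even treat $\bar f$ as an admissible candidate; it is precisely here that the hypothesis $\sup_n f_n(x_*)<\infty$ at a \emph{single} point is used, converting the uniform oscillation estimate into finiteness and continuity of $\bar f$ on all of $X$.
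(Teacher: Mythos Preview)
Your proof is correct and follows essentially the same route as the paper's: reduce (2) to (1) by negation, use the pointwise inequality $(\sup_n f_n(x)-\sup_n f_n(y))^2\le\sum_n(f_n(x)-f_n(y))^2$ together with the resistance-weight representation on a spread sequence to bound the approximating energies, and invoke Proposition~\ref{comp} after checking $\bar f\in C(X,R)$ via the resistance estimate. The only cosmetic difference is that the paper works directly with $\bar f$ from the start, whereas you pass through the partial maxima $g_N$ before taking $N\to\infty$; this extra step is harmless but not needed.
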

\begin{rem}
	Lemma \ref{inf} is essentially a special case of \cite[Theorem 2.38 (1)]{Kaj}, which is written in Japanese. We give a proof of Lemma \ref{inf} for the reader's convenience.
\end{rem}
\begin{proof}
	Replacing $f_n$ by $-f_n,$ we only need to show $(1).$ We first note that
	\begin{align*}
	 |\bar f(x)-\bar f(y)|\le \sup_{n}|f_n(x)-f_n(y)|&\le \sup_n(R(x,y)\Mc{E}(f_n,f_n))^{1/2}\\
	 &\le R(x,y)^{1/2}(\textstyle \sum_{n\ge0}\Mc{E}(f_n,f_n))^{1/2}
	\end{align*}
	for any $x,y\in X,$ so $\bar f(x)<\infty$ and $\bar f\in C(X,R).$ Let $\{V_m\}_{m\ge1}$ be a spread sequence of $(X,R)$  and $\mu_m$ be associated resistance weights with $(\Mc{E}|_{V_m},\ell(V_m)).$ Then
	\begin{align*}
		\Mc{E}(\bar f, \bar f)=&\lim_{m\to\infty}\Mc{E}|_{V_m}(\bar f|_{V_m},\bar f|_{V_m})\\
		=&\lim_{m\to\infty}\frac{1}{2}\sum_{x,y\in V_m: x\ne y}\bigl(\bar f(x)-\bar f(y)\bigr)^2(\mu_m)_{x,y}\\
		\le &\lim_{m\to\infty}\frac{1}{2} \sum_{n\ge0}\sum_{x,y\in V_m: x\ne y}\bigl(f_n(x)-f_n(y)\bigr)^2(\mu_m)_{x,y}\\
		=& \lim_{m\to\infty}\sum_{n\ge0}\Mc{E}|_{V_m}(f_n|_{V_m},f_n|_{V_m}) =\sum_{n\ge0}\Mc{E}(f_n,f_n).
	\end{align*}
	(Note that any term of the sums in the above inequalities is nonnegative.) This with Proposition \ref{comp} proves the lemma. 
\end{proof}
\begin{proof}[Proof of Proposition \ref{pre}]
	By \cite[Lemma 7.10]{Kig12}, there exist $n\ge0$ and $C>0$ with
	\[C^{-1}2^k\le \Mc{R}(x, B(x,2^k)^c\cap\ol{B(x,2^{k+N})})\le C2^k\]
	for any $x\in X$ and $k\in\Mb{Z}$ unless $B(x,2^k)=X.$ Fix any $x\in X$ and set $A_k=B(x,2^k)^c\cap\ol{B(x,2^{k+N})}.$ Let $f_k$ be the optimal functions for $\Mc{R}(x,A_k)$ if $B(x,2^k)\ne X,$ and otherwise $f_k\equiv 1.$ 
	Then for any $a\in\Mb{Z},\ \sum_{k\ge a}\Mc{E}(f_k,f_k)<C2^{-a+1}$ and so $g_a:=\inf_{k \ge a}f_k\in\Mc{F}$ by Lemma \ref{inf} (2). Since $g_a(x)=1$ and $g_a|_{B(x,2^a)^c} \equiv 0,$ it follows that 
\begin{equation}\label{eq2}
	\Mc{R}(x, B(x,2^a)^c)>C^{-1}2^{a-1} \text{ for any }a\in\Mb{Z}\text{ with }B(x, 2^a)\ne X,
\end{equation}
which shows (ACC). Moreover, \eqref{eq2} also shows that for any nonempty $Y\subset X$ with $x\not\in \ol{Y},$ there exists $f\in \Mc{F}$ satisfying $f(x)=1$ and $f|_Y\equiv 0.$ Applying \cite[Theorem 6.3]{Kig12}, we conclude that $(\Mc{E,F})$ is regular. 
\end{proof}
Now Lemma \ref{xhk} immediately follows from \cite[Theorems 9.4 and 10.4]{Kig12} with Proposition \ref{pre}. (Note that the condition (ACC) is used later.)\par
We next give some properties of resistance forms, which will be needed in Section \ref{secpart}.
\begin{lem}\label{resab}
	Let $A_1, A_2$ be nonempty subsets of $X$ and $\{V_n\}_{n\ge0}$ be a spread sequence. Suppose that $A_i\subset\ol{\cup_{n\ge0}(A_i\cap V_n)}$ for $i=1,2.$ Then $\Mc{R}(A_1,A_2)$ $=\lim_{n\to\infty}\Mc{R}_n(A_1,A_2),$ where $\Mc{R}_n$ is the resistance between sets associated with $(\Mc{E}|_{V_n},\Mc{F}|_{V_n}).$
\end{lem}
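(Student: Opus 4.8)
The plan is to pass to conductances and to realize both sides as constrained energy‑minimization problems over the ambient form $\Mc{F}.$ Write $c:=\Mc{R}(A_1,A_2)^{-1}$ and $c_n:=\Mc{R}_n(A_1,A_2)^{-1},$ with the convention $0^{-1}=\infty.$ First I would use Proposition \ref{Proptr} together with $\Mc{F}|_{V_n}=\ell(V_n)$ to rewrite the trace conductance purely in terms of $\Mc{E}:$
\[
c_n=\inf\bigl\{\Mc{E}(u,u)\bigm| u\in\Mc{F},\ u|_{A_1\cap V_n}\equiv 1,\ u|_{A_2\cap V_n}\equiv 0\bigr\}.
\]
Indeed, minimizing $\Mc{E}|_{V_n}(w,w)$ over $w\in\ell(V_n)$ subject to the prescribed boundary values on $A_i\cap V_n$ and then unfolding the definition \eqref{ey} of $\Mc{E}|_{V_n}$ turns the finite‑dimensional problem into the displayed ambient infimum; this infimum is attained by some $u_n\in\Mc{F},$ and by the truncation property \eqref{RF4} I may assume $0\le u_n\le 1.$ Since $\{A_i\cap V_n\}_n$ is increasing, the feasible sets shrink with $n,$ so $c_n$ is non‑decreasing; and since $\Mc{F}_{A_1,A_2}$ is contained in every feasible set, $c_n\le c.$ Hence $c_\infty:=\lim_{n\to\infty}c_n$ exists and $c_\infty\le c,$ which is the easy half.

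It remains to prove $c\le c_\infty,$ and here I may assume $c_\infty<\infty$ (otherwise the inequality is trivial). The idea is to produce a minimizer of the ambient problem as a limit of the $u_n.$ The optimizers satisfy $\Mc{E}(u_n,u_n)=c_n\le c_\infty,$ so $\{[u_n]\}$ is bounded in the Hilbert space $(\Mc{F}/\!\sim,\Mc{E})$ afforded by \eqref{RF2}; I extract a weakly convergent subsequence $[u_{n_k}]\rightharpoonup[u].$ For each $x$ the functional $v\mapsto v(x)-v(x_0)$ (with a fixed base point $x_0$) is well defined on $\Mc{F}/\!\sim$ and bounded, since $|v(x)-v(x_0)|\le R(x,x_0)^{1/2}\Mc{E}(v,v)^{1/2};$ so after passing to a further subsequence along which $u_{n_k}(x_0)$ converges, I may choose the representative $u$ so that $u_{n_k}\to u$ pointwise on all of $X.$ Weak lower semicontinuity of the Hilbert norm then gives $\Mc{E}(u,u)\le\liminf_k\Mc{E}(u_{n_k},u_{n_k})\le c_\infty,$ and in particular $u\in\Mc{F}$ is continuous.

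The hypothesis $A_i\subset\ol{\Cup_{n}(A_i\cap V_n)}$ enters precisely in verifying the boundary conditions. For fixed $m$ and all $n_k\ge m$ one has $u_{n_k}\equiv 1$ on $A_1\cap V_m$ and $u_{n_k}\equiv 0$ on $A_2\cap V_m;$ letting $k\to\infty$ pointwise and then letting $m$ vary gives $u\equiv 1$ on $\Cup_m(A_1\cap V_m)$ and $u\equiv 0$ on $\Cup_m(A_2\cap V_m).$ Since $u$ is continuous these equalities persist on the closures, and the hypothesis yields $u\equiv 1$ on $A_1$ and $u\equiv 0$ on $A_2.$ Thus $u\in\Mc{F}_{A_1,A_2}$ with $\Mc{E}(u,u)\le c_\infty,$ whence $c\le c_\infty.$ Combined with the easy half this gives $c=c_\infty,$ i.e. $\Mc{R}(A_1,A_2)=\lim_n\Mc{R}_n(A_1,A_2).$ The degenerate case is covered automatically: if $\Mc{F}_{A_1,A_2}=\emptyset$ then the construction above is impossible, forcing $c_\infty=\infty,$ which is consistent with $c=\infty.$

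The main obstacle is this hard inequality: extracting from the energy‑bounded optimizers a genuine limit $u\in\Mc{F}$ whose energy does not increase (lower semicontinuity) and which still meets the boundary values on all of $A_1$ and $A_2$ rather than only on the finite approximations $A_i\cap V_n.$ The hypothesis $A_i\subset\ol{\Cup_n(A_i\cap V_n)}$ is exactly what transfers the boundary conditions from the approximating sets to $A_i$ via continuity of the limit. (Alternatively, since $(X,R)$ is complete and doubling, hence proper, one may replace the weak‑compactness step by an Arzel\`a--Ascoli argument using the uniform H\"older bound $|u_n(x)-u_n(y)|\le R(x,y)^{1/2}c_\infty^{1/2},$ and then recover $u\in\Mc{F}$ with $\Mc{E}(u,u)\le c_\infty$ from the increasing convergence $\Mc{E}|_{V_N}(u|_{V_N},u|_{V_N})\nearrow\Mc{E}(u,u)$ in Proposition \ref{comp}.)
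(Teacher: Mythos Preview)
Your argument is correct. Both you and the paper reduce to showing $c\le c_\infty$ by producing, from the optimizers $u_n\in\Mc{F}$ of the approximating problems, a limit $u\in\Mc{F}_{A_1,A_2}$ with $\Mc{E}(u,u)\le c_\infty,$ and both verify the boundary conditions on $A_i$ via pointwise convergence on $\Cup_n(A_i\cap V_n)$ followed by continuity and the density hypothesis. The difference is in how the limit is extracted. You use the direct method: boundedness of $\{[u_n]\}$ in the Hilbert space $(\Mc{F}/\!\sim,\Mc{E}),$ weak compactness, and lower semicontinuity of the norm, obtaining $\Mc{E}(u,u)\le c_\infty$ along a subsequence. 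The paper instead exploits the nested structure of the constraint sets: since $u_m$ is the energy minimizer over an affine subspace containing $u_n$ for $n>m,$ orthogonality gives $\Mc{E}(u_n-u_m,u_n-u_m)=\Mc{E}(u_n,u_n)-\Mc{E}(u_m,u_m),$ so $\{u_n\}$ is genuinely Cauchy and converges \emph{strongly} to some $f$ with $\Mc{E}(f,f)=c_\infty.$ The paper's route is a little sharper (no subsequence, strong convergence, equality rather than inequality) and avoids invoking weak compactness, while your route is the standard variational template and would still work if the approximating constraint sets were not monotone.
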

\begin{proof}
	By definition of $\Mc{E}|_{Y_n},$ it suffices to show that \[\textstyle \Mc{R}(A_1,A_2)\ge\lim_{n\to\infty}\Mc{R}_n(A_1,A_2).\]
	We may assume $\lim_{n\to\infty}\Mc{R}_n(A_1,A_2)>0$ and $A_i\cap V_0\ne\emptyset\ $ for $i=1,2$ without loss of generality. Let $\{f_n\}_{n\ge0}\subset\Mc{F}$ be functions satisfying
	\begin{align*}
		&\Mc{R}_n(A_1,A_2)^{-1}\\
		=&\!\min\Bigl\{ \min\bigl\{ \Mc{E}(f,f)\bigm\vert f\in\Mc{F}, f|_{V_n}\equiv f_*  \bigr\}\!\Bigm\vert\! f_*\in\ell(V_n), f_*|_{A_1\cap V_n}\equiv1, f_*|_{A_2\cap V_n}\equiv 0\Bigr\}\\
	=&\Mc{E}(f_n,f_n),
	\end{align*} 
$f_n|_{A_1\cap V_n}\equiv 1$ and $f_n|_{A_2\cap V_n}\equiv 0.$ Then by the convexity argument, we obtain $0\le \Mc{E}(f_n-f_m, f_n-f_m)=\Mc{E}(f_n,f_n)-\Mc{E}(f_m,f_m)$ for any $n,m\in\Mb{N}$ with $n>m.$ Fix any $x\in A_1\cap V_1.$ Since $\lim_{n\to\infty}\Mc{E}(f_n,f_n)=\lim_{n\to\infty}\Mc{R}_n(A_1,A_2)^{-1}<\infty,$ there exists $f\in\Mc{F}$ such that $f(x)=1$ and $\lim_{n\to\infty}\Mc{E}(f-f_n,f-f_n)=0$ by \eqref{RF2}. Then for any $y\in \cup_{n\ge0}(A_1\cap V_n),$\[
	|f(y)-1|=|(f-f_n)(y)-(f-f_n)(x)|\le R(x,y)\Mc{E}(f-f_n,f-f_n).\]
Hence $f|_{A_1}\equiv 1$ because the right hand side of the last inequality tends to 0 and $f\in C(X,R).$ In the same way, we obtain $f|_{A_2}\equiv 0$ and so 
\[\Mc{R}(A_1,A_2)^{-1}\le\Mc{E}(f,f)=\lim_{n\to\infty}\Mc{E}(f_n,f_n)=\lim_{n\to\infty}\Mc{R}_n(A_1,A_2)^{-1},\]
which completes the proof.
\end{proof}
\begin{lem}\label{resbb}
	Let $\eta\in(0,1),$ then $\Mc{R}(\ol{B(x,\eta r)}, B(x,r)^c)\asymp r$ for any $x\in X, r>0$ with $B(x,r)\ne X.$
\end{lem}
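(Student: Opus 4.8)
The plan is to prove the two inequalities in $\asymp$ separately; the single input I would isolate first is the uniform two-sided \emph{point} estimate
\[\Mc{R}(z,B(z,\rho)^c)\asymp\rho\qquad\text{for all }z\in X,\ \rho>0\text{ with }B(z,\rho)\ne X.\]
Here the lower bound $\Mc{R}(z,B(z,\rho)^c)\gtrsim\rho$ is \eqref{eq2} extended from dyadic radii to all $\rho$: if $2^a\le\rho$ then $B(z,\rho)^c\subset B(z,2^a)^c$, and shrinking the second argument increases $\Mc{R}.$ The upper bound $\Mc{R}(z,B(z,\rho)^c)\lesssim\rho$ comes from the estimate $\Mc{R}(z,B(z,2^k)^c\cap\ol{B(z,2^{k+N})})\le C2^k$ of \cite[Lemma 7.10]{Kig12} used in the proof of Proposition~\ref{pre}, since $B(z,2^k)^c\cap\ol{B(z,2^{k+N})}\subset B(z,2^k)^c$ and enlarging the second argument decreases $\Mc{R}.$ Throughout I use the elementary monotonicity of $\Mc{R}(A,B)$ following from its variational definition: enlarging $A$ or $B$ can only decrease it.

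The upper bound in the lemma is then immediate from $\{x\}\subset\ol{B(x,\eta r)}$ and monotonicity: $\Mc{R}(\ol{B(x,\eta r)},B(x,r)^c)\le\Mc{R}(x,B(x,r)^c)\lesssim r.$ For the lower bound I would first record a threshold lemma: there is $\eta_0\in(0,1),$ depending only on the structural constants, such that whenever $B(z,\rho)\ne X$ the optimal function $g$ for $\Mc{R}(z,B(z,\rho)^c)$ satisfies $g\ge1/2$ on $\ol{B(z,\eta_0\rho)}.$ Indeed $g(z)=1$ and, for $w\in\ol{B(z,\eta_0\rho)},$ the Cauchy--Schwarz bound gives $(1-g(w))^2\le R(z,w)\,\Mc{E}(g,g)\le\eta_0\rho\cdot\Mc{R}(z,B(z,\rho)^c)^{-1},$ which is $\le1/4$ once $\eta_0$ is small enough, by $\Mc{R}(z,B(z,\rho)^c)\gtrsim\rho.$

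Now I build a competitor for $\Mc{R}(\ol{B(x,\eta r)},B(x,r)^c)$ of energy $\lesssim1/r.$ Set $s=\eta_0(1-\eta)r.$ Since $(X,R)$ is doubling, I can cover $\ol{B(x,\eta r)}$ by $M=M(\eta)$ balls $B(z_i,s)$ with $z_i\in\ol{B(x,\eta r)}.$ For $z_i\in\ol{B(x,\eta r)}$ one has $B(z_i,(1-\eta)r)\subset B(x,r)\ne X,$ so the optimal function $g_i$ for $\Mc{R}(z_i,B(z_i,(1-\eta)r)^c)$ is well defined; put $\phi_i:=\widehat{2g_i}.$ Because the radii ratio is $s/((1-\eta)r)=\eta_0,$ the threshold lemma gives $\phi_i\equiv1$ on $\ol{B(z_i,s)},$ while $\phi_i\equiv0$ on $B(z_i,(1-\eta)r)^c\supset B(x,r)^c,$ and \eqref{RF4} yields $\Mc{E}(\phi_i,\phi_i)\le4\Mc{E}(g_i,g_i)=4\,\Mc{R}(z_i,B(z_i,(1-\eta)r)^c)^{-1}\lesssim 1/((1-\eta)r).$ Taking $u:=\max_{1\le i\le M}\phi_i,$ each point of $\ol{B(x,\eta r)}$ lies in some $\ol{B(z_i,s)},$ so $u\equiv1$ on $\ol{B(x,\eta r)}$ and $u\equiv0$ on $B(x,r)^c.$ By Lemma~\ref{inf}(1), $u\in\Mc{F}$ with $\Mc{E}(u,u)\le\sum_{i=1}^M\Mc{E}(\phi_i,\phi_i)\lesssim M/((1-\eta)r),$ hence $\Mc{R}(\ol{B(x,\eta r)},B(x,r)^c)\ge\Mc{E}(u,u)^{-1}\ge c(\eta)\,r,$ which finishes the lower bound.

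The main obstacle is the lower bound for \emph{thin} annuli, i.e. $\eta$ near $1.$ A single global cutoff—such as the equilibrium potential of $\Mc{R}(x,B(x,r)^c)$ truncated at a fixed level—fails here, since that potential need not stay bounded away from $0$ on $\ol{B(x,\eta r)}$ when $\eta\to1$ (already on $\Mb{R}$ its value at radius $\eta r$ is $1-\eta$). The covering construction circumvents this by trading one cutoff adapted to the thin shell for boundedly many cutoffs adapted to balls of radius comparable to the shell width $(1-\eta)r,$ each of which is a \emph{thick}-annulus cutoff to which the threshold bound $g_i\ge1/2$ applies; the cost is the $\eta$-dependent constant $M(\eta),$ which is harmless as $\eta$ is fixed. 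The points requiring care are that the outer balls $B(z_i,(1-\eta)r)$ stay inside $B(x,r)$ and that Lemma~\ref{inf} legitimately bounds the energy of the finite maximum.
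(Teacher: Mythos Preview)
Your proof is correct and follows essentially the same approach as the paper: establish a threshold bound on the equilibrium potential of $\Mc{R}(z,B(z,\rho)^c)$, then cover $\ol{B(x,\eta r)}$ by finitely many small balls using doubling and take the maximum of the resulting cutoffs via Lemma~\ref{inf}. Two minor simplifications compared to the paper's argument: you treat all $\eta\in(0,1)$ uniformly by the covering (the paper first handles the case $\eta\le 1/2C$ with a single cutoff and only covers for larger $\eta$), and your threshold lemma uses only the basic resistance inequality $(g(z)-g(w))^2\le R(z,w)\,\Mc{E}(g,g)$ rather than the explicit potential formula from \cite[Theorems~4.1, 4.3 and Lemma~4.5]{Kig12} that the paper invokes.
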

\begin{proof}
	By Proposition \ref{pre} and \cite[Theorem 7.12]{Kig12}, there exists $C>0$ such that
	$C^{-1}r \le \Mc{R}(x,B(x,r)^c)\le Cr$ for any $x\in X, r>0$ with $B(x,r)\ne X.$ Hence we only need to show $\Mc{R}(\ol{B(x,\eta r)}, B(x,r)^c)\gtrsim r.$ \\
	We first prove for the case $\eta\le 1/2C.$ Let $f_{x,r}$ be the optimal function of $\Mc{R}(x,B(x,r)^c),$ then by \cite[Theorems 4.1 and 4.3 and Lemma 4.5]{Kig12},
	\begin{align*}
		f_{x,r}(y)\ge&\frac{\Mc{R}(x,B(x,r)^c)+\Mc{R}(y,B(x,r)^c)-\Mc{R}(x,y)}{2\Mc{R}(x,B(x,r)^c)}\\
		\ge&(C^{-1}r-\eta r)C^{-1}/2r\ge C^{-2}/4
	\end{align*}
for any $y\in B(x,\eta r).$ Hence let $g_{x,r}:=\bigl((4C^2f_{x,r}\wedge1)\vee 0\bigr),$ then $g_{x,r}|_{B(x,\eta r)}\equiv 1,\ g_{x,r}|_{B(x,r)^c}\equiv 0$ and $\Mc{E}(g_{x,r},g_{x,r})\le 16C^4\Mc{E}(f_{x,r},f_{x,r})\le 16C^3/r.$ This proves the statement for this case.\\
We now turn to the case $1/2C<\eta<1.$ Since $(X,R)$ is doubling, there exists $N=N_\eta\in\Mb{N}$ such that for any $x \in X,$ there exists $\{x_i\}_{i=1}^N\subset X$ satisfying $B(x,\eta r)\subset \cup_{i=1}^N B(x_i, (1-\eta)r/2C).$ Let $g=\max_{1\le i\le N}(g_{x_i,(1-\eta)r}),$ where $g_{x_i,(1-\eta)r}$ is same as above, then $g\in\Mc{F}$ by Lemma \ref{inf} (1). Moreover, $g|_{B(x,\eta r)}\equiv 1$ and $g|_{B(x,r)^c}\equiv0$ because $R(x_i,y)\ge R(x,y)-R(x,x_i)\ge (1-\eta)r$ for any $y\in B(x,r)^c$ and $1\le i\le N.$ Therefore
\begin{align*}
	\Mc{R}(\ol{B(x,\eta r)}, B(x,r)^c)^{-1}\le& \Mc{E}(g,g)\le \sum_{i=1}^N\Mc{E}(g_{x_i,(1-\eta)r},g_{x_i,(1-\eta)r})\\
	\le& 16N_\eta C^3/(1-\eta)r,
\end{align*}
which proves the lemma.
\end{proof}
\begin{cor}\label{bdr}
	Let $A_1, A_2\subset X$ be nonempty subsets. If $A_1$ is bounded and $R(A_1,A_2)>0,$ then $\Mc{R}(A_1,A_2)>0.$
\end{cor}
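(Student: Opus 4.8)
The plan is to exhibit a single function that is admissible for $\Mc{R}(A_1,A_2)$ and has finite energy. Since $\Mc{R}(A_1,A_2)$ is the reciprocal of the minimal energy over such functions, producing one finite-energy competitor forces $\Mc{R}(A_1,A_2)>0$. Write $r:=R(A_1,A_2)>0$. The key observation is that for every $x\in A_1$ one has $A_2\subset B(x,r)^c$, because $R(x,z)\ge R(A_1,A_2)=r$ for all $z\in A_2$; in particular $B(x,r)\ne X$, since $A_2\ne\emptyset$.

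Fix any $\eta\in(0,1)$. For each $x\in A_1$, Lemma \ref{resbb} provides (as its optimal function) some $g_x\in\Mc{F}$ with $g_x|_{\ol{B(x,\eta r)}}\equiv 1$, $g_x|_{B(x,r)^c}\equiv 0$, which we may take to satisfy $0\le g_x\le 1$, and whose energy obeys $\Mc{E}(g_x,g_x)=\Mc{R}(\ol{B(x,\eta r)},B(x,r)^c)^{-1}\lesssim 1/r$, with an implied constant independent of $x$ by Lemma \ref{resbb}. By the previous paragraph, each such $g_x$ vanishes on $A_2$.

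Next I would cover $A_1$ by finitely many such balls. Since $A_1$ is bounded and $(X,R)$ is doubling by Theorem \ref{xbf}, iterating the doubling property covers $A_1$ by finitely many balls of radius $\eta r/2$; selecting a point of $A_1$ inside each ball that meets $A_1$ and enlarging the radius, I obtain points $x_1,\dots,x_N\in A_1$ with $A_1\subset\Cup_{i=1}^N B(x_i,\eta r)$. Set $g:=\max_{1\le i\le N}g_{x_i}$. By Lemma \ref{inf} (1), $g\in\Mc{F}$ and $\Mc{E}(g,g)\le\sum_{i=1}^N\Mc{E}(g_{x_i},g_{x_i})<\infty$. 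Moreover $g|_{A_1}\equiv 1$, since each point of $A_1$ lies in some $B(x_i,\eta r)$ where $g_{x_i}\equiv 1$, and $g|_{A_2}\equiv 0$, since every $g_{x_i}$ vanishes on $A_2$. Hence $g\in\Mc{F}_{A_1,A_2}$ with finite energy, so $\Mc{R}(A_1,A_2)\ge\Mc{E}(g,g)^{-1}>0$.

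The only delicate point is this covering step: I must ensure that the centers can be chosen in $A_1$, so that the associated functions vanish on $A_2$, and that finitely many of them suffice. Both facts rest on the boundedness of $A_1$ together with the doubling property of $(X,R)$. Everything else is a direct combination of Lemmas \ref{resbb} and \ref{inf}, carried out exactly as in the second case ($1/2C<\eta<1$) of the proof of Lemma \ref{resbb}.
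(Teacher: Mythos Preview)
Your proof is correct and follows essentially the same approach as the paper's: cover $A_1$ by finitely many balls $B(x_i,\eta r)$ with centers $x_i\in A_1$ using the doubling property, take the functions from Lemma~\ref{resbb}, and combine them via Lemma~\ref{inf}~(1) to produce a function in $\Mc{F}_{A_1,A_2}$. The paper states this in one line (with the specific choice $\eta=1/2$), while you have spelled out the details.
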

\begin{proof}
	Since $(X,R)$ is doubling, there exist $N\in\Mb{N}$ and $\{x_i\}_{i=1}^N\subset A_1$ with $A_1\subset \cup_{i=1}^N B(x_i,R(A_1,A_2)/2).$ Thus the proof is straightforward by Lemmas \ref{inf} (1) and \ref{resbb}.
\end{proof}

\begin{defi}[Local]\label{local}
	$(\Mc{E,F})$ is called \emph{local} if it satisfies $\Mc{E}(u,v)=0$ whenever $u,v\in \Mc{F}$ and $R(\{x\mid u(x)\ne0 \},\{x\mid v(x)\ne0 \})>0.$ 
\end{defi}
Under Assumption \ref{ass}, for each $\mu\in\Mc{M}_{(X,d)},$ $(\Mc{E,F})$ is a local resistance form if and only if $(\Mc{E}_\mu,\Mc{D}_\mu)$ is a local Dirichlet form. See Appendix \ref{ApLoc} for details.
\begin{prop}\label{Propwbd}
	Let $A_i\ (i=1,2)$ be nonempty subsets of $(X,R)$ with $R(A_1,A_2)>0$ and  $\diam(A_1)<\infty,$ $\{V_n\}_{n\ge0}$ be a spread sequence and $\mu_n$ be the resistance weights associated with $(\Mc{E}|_{V_n}, \ell(V_n))$. Assume $(\Mc{E,F})$ to be local, then
	\[\lim_{n\to\infty}\sum_{(x,y)\in D_n}(\mu_n)_{x,y}=0, \text{ where }D_n=(A_1\times A_2\cup A_2\times A_1)\cap V_n\times V_n. \]
	
\end{prop}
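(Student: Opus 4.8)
The plan is to reduce the statement to the fact, guaranteed by locality, that the mixed energy of two suitable cut-off functions with disjoint supports vanishes. Set $\varepsilon=R(A_1,A_2)>0$; since $R(A_1,A_2)>0$ forces $A_1\cap A_2=\emptyset$ and $(\mu_n)_{x,y}\ge0$ for $x\ne y$, the two pieces of $D_n$ are disjoint and related by the symmetry of $\mu_n$, so $\sum_{(x,y)\in D_n}(\mu_n)_{x,y}=2\sum_{x\in A_1\cap V_n,\,y\in A_2\cap V_n}(\mu_n)_{x,y}$ is a sum of nonnegative terms. I would aim to dominate this by $-\Mc{E}|_{V_n}(\phi|_{V_n},\psi|_{V_n})$, where $\phi,\psi\in\Mc{F}$ satisfy $0\le\phi,\psi\le1$, $\phi\equiv1$ on $A_1$, $\psi\equiv1$ on $A_2$, $\phi\psi\equiv0$, and $R(\{\phi\ne0\},\{\psi\ne0\})>0$. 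Given such functions, Proposition \ref{comp} yields $\Mc{E}|_{V_n}(\phi|_{V_n},\psi|_{V_n})\to\Mc{E}(\phi,\psi)$, while locality (Definition \ref{local}) gives $\Mc{E}(\phi,\psi)=0$, so the dominating quantity tends to $0$.

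The key computation exploits $\phi\psi\equiv0$ to simplify the bilinear form. For any $n$,
\[\Mc{E}|_{V_n}(\phi|_{V_n},\psi|_{V_n})=\frac12\sum_{x,y\in V_n}\bigl(\phi(x)-\phi(y)\bigr)\bigl(\psi(x)-\psi(y)\bigr)(\mu_n)_{x,y}=-\sum_{x,y\in V_n}\phi(x)\psi(y)(\mu_n)_{x,y},\]
the second equality coming from expanding the product, dropping the terms $\phi(x)\psi(x)$ and $\phi(y)\psi(y)$ (which vanish since $\phi\psi\equiv0$), and using $(\mu_n)_{x,y}=(\mu_n)_{y,x}$. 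Every summand $\phi(x)\psi(y)(\mu_n)_{x,y}$ is nonnegative (for $x\ne y$ because $\phi,\psi\ge0$ and $(\mu_n)_{x,y}\ge0$, and for $x=y$ because $\phi(x)\psi(x)=0$), and for $x\in A_1\cap V_n$, $y\in A_2\cap V_n$ it equals $(\mu_n)_{x,y}$ since $\phi(x)=\psi(y)=1$. Hence $\sum_{x\in A_1\cap V_n,\,y\in A_2\cap V_n}(\mu_n)_{x,y}\le-\Mc{E}|_{V_n}(\phi|_{V_n},\psi|_{V_n})$, whose right-hand side tends to $0$ by the previous paragraph; squeezing the nonnegative quantity $\sum_{(x,y)\in D_n}(\mu_n)_{x,y}$ then gives the claim.

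The main obstacle is the construction of $\psi$, because $A_2$ need not be bounded, so a cut-off supported on a neighborhood of $A_2$ could fail to have finite energy. I would circumvent this by building both functions from cut-offs around the bounded set $A_1$. Choosing $\delta=\varepsilon/4$, let $\phi\in\Mc{F}$ be a cut-off with $\phi\equiv1$ on $A_1$ and $\{\phi\ne0\}\subset\{R(\cdot,A_1)\le\delta\}$, and let $\phi_2\in\Mc{F}$ be a cut-off with $\phi_2\equiv1$ on $\{R(\cdot,A_1)\le2\delta\}$ and $\{\phi_2\ne0\}\subset\{R(\cdot,A_1)\le3\delta\}$; both exist with finite energy by covering the relevant bounded sets with finitely many balls (using that $(X,R)$ is doubling) and applying the cut-off estimate of Lemma \ref{resbb}, exactly as in the proof of Corollary \ref{bdr}. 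Put $\psi:=1_X-\phi_2\in\Mc{F}$. Then $0\le\psi\le1$; since $3\delta<\varepsilon$ we have $\{R(\cdot,A_1)\le3\delta\}\cap A_2=\emptyset$, so $\phi_2\equiv0$ and thus $\psi\equiv1$ on $A_2$; moreover $\psi\equiv0$ on $\{R(\cdot,A_1)\le2\delta\}\supset\{\phi\ne0\}$, which gives $\phi\psi\equiv0$, and via $\{\psi\ne0\}\subset\{R(\cdot,A_1)>2\delta\}$ together with the triangle inequality $R(a,b)\ge R(b,A_1)-R(a,A_1)>\delta$ for $a\in\{\phi\ne0\}$, $b\in\{\psi\ne0\}$, it gives $R(\{\phi\ne0\},\{\psi\ne0\})\ge\delta>0$. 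With these functions in hand the argument above closes the proof.
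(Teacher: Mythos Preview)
Your proof is correct and follows the same strategy as the paper's: build two cut-off functions in $\Mc{F}$ with well-separated supports, both constructed from cut-offs around the bounded set $A_1$, and use locality to conclude that their mixed energy vanishes in the limit. The only difference is cosmetic---the paper routes the computation through the polarization identity $\Mc{E}|_{V_n}(f_1,f_1)+\Mc{E}|_{V_n}(f_2,f_2)-\Mc{E}|_{V_n}(f_1+f_2,f_1+f_2)$ and bounds the off-$D_n$ remainder via $|(f_1+f_2)(x)-(f_1+f_2)(y)|\le|f_1(x)-f_1(y)|\vee|f_2(x)-f_2(y)|$, whereas you exploit $\phi\psi\equiv0$ to write $-\Mc{E}|_{V_n}(\phi,\psi)=\sum_{x,y}\phi(x)\psi(y)(\mu_n)_{x,y}$ directly, which is a slightly slicker path to the same inequality.
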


\begin{proof}
	Let \[A_1^*=\ol{\{x\mid R(x,A_1)\ge R(A_1,A_2)/3\}}\ \text{and}\ A_2^*=\ol{\{x\mid R(x,A_1)\le 2R(A_1,A_2)/3\}}.\]  By Corollary \ref{bdr}, we can take the optimal functions $f_i\in\Mc{F}$ for $\Mc{R}(A_i, A_i^*)$ and $i=1,2.$ Then,
	\begin{align*}
		&\Mc{E}|_{V_n}(f_1|_{V_n},f_1|_{V_n})+\Mc{E}|_{V_n}(f_2|_{V_n},f_2|_{V_n})-\Mc{E}|_{V_n}(f_1+f_2|_{V_n},f_1+f_2|_{V_n})\\
		=&\sum_{(x,y)\in D_n}(\mu_n)_{x,y}\\
		&\begin{aligned} +\frac{1}{2}\sum_{\substack{(x,y)\in V_n\times V_n\setminus D_n\\ 
					:x\ne y}}\bigl((f_1(x)&-f_1(y))^2+(f_2(x)-f_2(y))^2\\
				 &-((f_1+f_2)(x)-(f_1+f_2)(y))^2\bigr)(\mu_n)_{x,y}.
		\end{aligned}
	\end{align*} 
Since $0\le f_1,f_2 \le 1$ and $supp(f_1)\cap supp(f_2)=\emptyset,$ we have
\begin{align*}
	|(f_1+f_2)(x)-(f_1+f_2)(y)|=& |(f_1\vee f_2)(x)-(f_1\vee f_2)(y)|\\
	\le & |f_1(x)-f_1(y)|\vee |f_2(x)-f_2(y)|
\end{align*}	
for any $x,y\in X.$ Therefore
\begin{align*}
	0=&\Mc{E}(f_1,f_1)+\Mc{E}(f_2,f_2)-\Mc{E}(f_1+f_2,f_1+f_2)\\
	=&\lim_{n\to\infty}	\Mc{E}|_{V_n}(f_1|_{V_n},f_1|_{V_n})+\Mc{E}|_{V_n}(f_2|_{V_n},f_2|_{V_n})-\Mc{E}|_{V_n}(f_1+f_2|_{V_n},f_1+f_2|_{V_n})\\
	\ge& \lim_{n\to\infty}\sum_{(x,y)\in D_n}(\mu_n)_{x,y}\ge 0
\end{align*}
because $(\Mc{E,F})$ is local, which completes the proof.
\end{proof}

\begin{prop}\label{PropCo}
	There exists $\alpha>1$ with $R_{V, B(x, \alpha R(x,y))}(x,y)\le 2R(x,y)$ for any nonempty finite subset $V\subset X$ and $x,y\in V,$ where $\mu_{x,y}$ are the resistance weights associated with $(\Mc{E}|_V,\ell(V))$ and
	\[R_{V,A}(x,y)^{-1}=\min\{\frac{1}{2}\sum_{x,y\in A}(f(x)-f(y))^2\mu_{x,y}\mid f(x)=1,\ f(y)=0\}. \]
\end{prop}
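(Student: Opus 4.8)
The plan is to read the minimization defining $R_{V,A}(x,y)$ as an effective resistance. For a finite $V$ the weights $\{\mu_{u,w}\}$ are precisely the conductances of the resistance form $(\Mc{E}|_V,\ell(V))$, and $\frac12\sum_{u,w\in A}(f(u)-f(w))^2\mu_{u,w}$ is the Dirichlet energy of $f$ in the \emph{subnetwork} $N_A$ obtained by keeping only the conductances with both endpoints in $A:=B(x,\alpha R(x,y))$. Thus $R_{V,A}(x,y)$ is the effective resistance between $x$ and $y$ in $N_A$. Since passing to a trace does not change effective resistances between points of $V$ (Proposition~\ref{Proptr}), $R_{V,V}(x,y)=R(x,y)=:r$; and deleting the conductances meeting $A^c$ can only increase resistance, so $R_{V,A}(x,y)\ge r$ for every $\alpha>1$. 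The content of the proposition is the reverse, localization estimate: for a suitable universal $\alpha$ the current between $x$ and $y$ barely uses the part of the network outside $B(x,\alpha r)$, so cutting it off costs at most a factor $2$.

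To turn this into an inequality I would argue on the potential side. Let $g^\star$ be the optimal function for $R_{V,A}(x,y)^{-1}$, i.e.\ the $N_A$-harmonic function with $g^\star(x)=1$, $g^\star(y)=0$ and $0\le g^\star\le 1$. For \emph{any} extension $\hat g\in\ell(V)$ with $\hat g|_A=g^\star$ we have $\hat g(x)=1,\ \hat g(y)=0$, hence $\Mc{E}|_V(\hat g,\hat g)\ge r^{-1}$; splitting this energy into the edges inside $A$ and the edges meeting $A^c$ gives
\[
	r^{-1}\le \Mc{E}|_V(\hat g,\hat g)=R_{V,A}(x,y)^{-1}+\mathcal{X}(\hat g),\qquad
	\mathcal{X}(\hat g):=\tfrac12\sum_{\{u,w\}\cap A^c\ne\emptyset}(\hat g(u)-\hat g(w))^2\mu_{u,w}.
\]
Consequently $R_{V,A}(x,y)^{-1}\ge r^{-1}-\mathcal{X}(\hat g)$, and it suffices to produce \emph{one} extension $\hat g$ of $g^\star$ with outer energy $\mathcal{X}(\hat g)\le\tfrac12 r^{-1}$; then $R_{V,A}(x,y)^{-1}\ge\tfrac12 r^{-1}$, i.e.\ $R_{V,A}(x,y)\le 2R(x,y)$.

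The delicate point is the choice of the extension, and this is where I expect the real work to be. Extending $g^\star$ by a single constant on $A^c$ is too wasteful: already on a path graph $g^\star$ is near $1$ on one side of $B(x,\alpha r)$ and near $0$ on the other, so no constant makes all boundary differences small. Instead I would extend $g^\star$ into $A^c$ \emph{locally}, letting it relax toward its boundary values scale by scale. Following the pattern of Lemma~\ref{resbb}, I would damp $g^\star$ across the dyadic annuli $B(x,2^{k+1}\alpha r)\setminus B(x,2^{k}\alpha r)$ using the cutoff functions produced there, covering each annulus by boundedly many balls via the doubling property and pasting the pieces by $\max/\min$ while staying in $\Mc{F}$ thanks to Lemma~\ref{inf}. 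Crucially, those cutoffs realize the \emph{effective} (not merely total) conductance of the annulus, which by Lemma~\ref{resbb} is $\asymp(2^{k}\alpha r)^{-1}$; since the oscillation of $g^\star$ across any annulus is trivially at most $1$, the $k$-th annulus contributes energy $\lesssim(2^{k}\alpha r)^{-1}$, and summing the geometric series yields $\mathcal{X}(\hat g)\lesssim(\alpha r)^{-1}$ with a constant depending only on the doubling and $\mathrm{(ACC)}$ constants of Proposition~\ref{pre}. Choosing $\alpha$ large enough forces $\mathcal{X}(\hat g)\le\tfrac12 r^{-1}$ uniformly in $V,x,y$, which closes the argument.

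The main obstacle is therefore the uniform control of the boundary leakage $\mathcal{X}$: one must exhibit an extension whose far-annulus energy is governed by effective rather than total cross-conductance, and here $\mathrm{(ACC)}$ is essential, since it prevents the conductances crossing $\partial B(x,\alpha r)$ from being concentrated on long, low-resistance shortcuts; the harmonic-function value estimates of \cite[Theorems 4.1 and 4.3 and Lemma 4.5]{Kig12}, already used in Lemma~\ref{resbb}, let one damp toward the correct \emph{local} boundary value. Equivalently, one could argue on the dual (flow) side via Thomson's principle: take the harmonic unit current in $N_V$, observe that its detour into $A^c$ carries total current $O(\alpha^{-1/2})$ by the same annulus resistance bounds, and reroute that small deficit inside $A$; I expect the potential-side bookkeeping above to be the more transparent of the two routes.
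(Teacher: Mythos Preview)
Your energy-splitting setup and the target $\mathcal{X}(\hat g)\le\tfrac12 r^{-1}$ match the paper's argument, including the factor $2$. The gap is the construction of $\hat g$. By insisting on $\hat g|_A=g^\star$ you must specify values on $A^c$, and your dyadic-annulus scheme does not actually do this: $g^\star$ is undefined outside $A$, so there is nothing to ``damp''; a cutoff $\psi_k$ on the $k$-th annulus is a $[0,1]$-valued function, not an extension rule that knows which boundary value of $g^\star$ to approach. The claim ``the $k$-th annulus contributes energy $\lesssim(2^k\alpha r)^{-1}$'' would follow from a Leibniz-type bound if you were multiplying an already-defined bounded function by $\psi_k$, but you have no such function on $A^c$; extending by $0$ throws you back onto the raw boundary conductance $\sum_{u\in A,\,w\notin A}g^\star(u)^2\mu_{u,w}$, which is not controlled by the effective annulus resistance. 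Covering annuli by balls and pasting via $\max/\min$ does not fix this, because it still leaves the values of $\hat g$ on $A^c$ unspecified.

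The paper sidesteps the extension problem by dropping the constraint $\hat g|_A=g^\star$. Writing $B_\beta=B(x,\beta r)$, it takes $f_1:=g^\star$ together with the optimal functions $f_2$ for $\Mc{R}_V(x,B_{\alpha/2}^c)$ and $f_3$ for $\Mc{R}_V(\ol{B_{\alpha/2}},B_\alpha^c)$ (a \emph{single} scale, two cutoffs, no dyadic sum), and sets $f=f_1\wedge f_2\wedge f_3$ on $B_\alpha\cap V$ and $f=f_2\wedge f_3=0$ outside. Then $f(x)=1$, $f(y)=0$, and since $|\min_i a_i-\min_i b_i|\le\max_i|a_i-b_i|$ one gets
\[
\Mc{E}|_V(f,f)\le R_{V,B_\alpha}(x,y)^{-1}+\Mc{R}_V(x,B_{\alpha/2}^c)^{-1}+\Mc{R}_V(\ol{B_{\alpha/2}},B_\alpha^c)^{-1};
\]
choosing $\alpha$ via Lemma~\ref{resbb} so that each of the last two terms is at most $(4r)^{-1}$ finishes. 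The key point you missed is that the $\min$ may change $f_1$'s values \emph{inside} $A$, so the inside energy is no longer exactly $R_{V,A}(x,y)^{-1}$---but this is harmless, because you only ever needed the inequality $r^{-1}\le\Mc{E}|_V(f,f)$, not your exact splitting, and the $\min$ construction delivers the upper bound directly with no extension to perform.
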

\begin{rem}
	The idea and the proof of Proposition \ref{PropCo} essentially come from \cite[Lemma 2.5]{BCK}.
\end{rem}
\begin{proof}
	We first note that since Lemma \ref{resbb} holds,
	\[\Mc{R}_V(x,B(x,r)^c)\ge \Mc{R}_V(\ol{B(x,r/2)},B(x,r)^c)\ge  \Mc{R}(\ol{B(x,r/2)},B(x,r))^c\gtrsim r\]
	for any $x\in X$ and $r>0,$ where $\Mc{R}_V$ is the resistance between sets associated with $(\Mc{E}|_V,\ell(V)).$
	Thus we can find $\alpha>1$ such that
	\[\Mc{R}_V(x,B(x,(\alpha/2)r)^c)\vee \Mc{R}_V(\ol{B(x,(\alpha/2)r)},B(x,\alpha r)^c)\ge4r\]
	for any $x$ and $r.$  To shorten notation, we write $B_\beta$ instead of $B(x,\!B(x,\beta R(x,y))).$ Let $f_1$ (resp. $f_2$, $f_3$) be the optimal function for $R_{V, B_\alpha}(x,y)$ (resp. $\Mc{R}_V(x,B_{\alpha/2}^c),$ $\Mc{R}_V(\ol{B_{\alpha/2}},B_{\alpha}^c)$). We define $f\in\ell(V)$ by
	\begin{gather*}
		f(x)=\begin{cases}
			f_1(x)\wedge f_2(x)\wedge f_3(x) & (x\in B_\alpha)\\
			f_2(x)\wedge f_3(x) & (\text{otherwise})\end{cases}.\\
		\shortintertext{Then, $f(x)=1,$ $f(y)=0,$ $f|_{B_{2/\alpha}^c}\equiv 0$ and}
		|f(x)-f(y)|\le \begin{cases}
			\sum_{i=1}^3|f_i(x)-f_i(y)| &(\text{if }x,y\in B_{\alpha})\\
			0 &(\text{if }x,y\not\in B_{\alpha/2})\\
			|f_3(x)-f_3(y)|=1&
			\begin{aligned}(&\text{if }x\in B_{\alpha/2}\text{ and }y\not\in B_{\alpha}, \\
			&\text{or }y\in B_{\alpha/2}\text{ and }x\not\in B_{\alpha}).\end{aligned}
		\end{cases}
	\end{gather*}
	Therefore 
	\begin{align*}
		(R(x,y))^{-1}&\le\Mc{E}(f,f)\\
		&\le (R_{V, B_\alpha}(x,y))^{-1}+(\Mc{R}_V(x,B_{\alpha/2}^c))^{-1}+(\Mc{R}_V(\ol{B_{\alpha/2}},B_{\alpha}^c))^{-1}\\
		&\le(R_{V, B_\alpha}(x,y))^{-1}+\frac{1}{2}(R(x,y))^{-1}
	\end{align*}
	and $R_{V, B_\alpha}(x,y)\le 2R(x,y)$ as claimed.
\end{proof}
The remainder of this section is devoted to the proof of Proposition \ref{dsvh} below, which gives one of the key inequalities to prove Theorem \ref{main2}. For the rest of this section, we assume $d$ to be a metric on $X$ with $d\qs R.$ Then by Assumption \ref{ass}, it is easily shown that $\Mc{M}_{(X,d)}=\Mc{M}_{(X,R)}$ (see \cite[Corollary 12.4]{Kig12}, for example).  
\begin{defi}\label{rh}
	Set \[\ol{R}_d(x,r):=\sup_{y\in B_d(x,r)}R(x,y)\ \text{ and }\ h_{d,\mu}(x,r):=V_{d,\mu}(x,r)\ol{R}_d(x,r).\]
 We write $h_d(x,r)$ instead of $h_{d,\mu}(x,r)$ when no confusion can arise.
\end{defi}
\begin{prop}\label{dsvh}
	The limit $\ol{d_s}(\mu,\Mc{E}_\mu,\Mc{D}_\mu)$ exists for any $\mu\in\Mc{M}_{(X,d)}.$ Moreover,
	\begin{equation} \label{eqdsvh}
		\ol{d_s}(\mu,\Mc{E}_\mu,\Mc{D}_\mu)=2\limsup_{s\to\infty} \sup_{x\in X, r\in (0,\diam(X,d))}\frac{\log\bigl( V_{d,\mu}(x,r)/V_{d,\mu}(x,r/s)\bigr)}{\log\bigl( h_{d,\mu}(x,r)/h_{d,\mu}(x,r/s)\bigr)}.
	\end{equation}
\end{prop}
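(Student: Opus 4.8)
The plan is to reduce the statement to the on-diagonal heat kernel estimate for resistance forms and then to change variables between time scales and length scales. First I would record the estimate: since $(\Mc{E,F})$ is regular and satisfies (ACC) by Proposition~\ref{pre} and $\mu$ is (VD)$_d$ (recall $\Mc{M}_{(X,d)}=\Mc{M}_{(X,R)}$ and $d\qs R$), the hypotheses of \cite[Theorems 9.4 and 10.4]{Kig12} underlying Lemma~\ref{xhk} give constants $c,C>0$, independent of $x$ and $\tau$, with
\[
\frac{c}{V_{d,\mu}(x,r)}\le p_\mu(\tau,x,x)\le\frac{C}{V_{d,\mu}(x,r)}\qquad\text{whenever }\tau\asymp h_{d,\mu}(x,r).
\]
As $r\mapsto h_{d,\mu}(x,r)$ is increasing I may choose $r(x,\tau)$ with $h_{d,\mu}(x,r(x,\tau))\asymp\tau$, so that $\log p_\mu(\tau,x,x)=-\log V_{d,\mu}(x,r(x,\tau))+O(1)$ uniformly in $x,\tau$; note also that the ranges of $s$ in \eqref{eqmain} and of $r$ in \eqref{eqdsvh} correspond under $h_{d,\mu}$.

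Second, I would prove that the limit exists, separately from identifying its value. Write $F(t)$ for the expression under the limit in \eqref{eqmain} and set $G(t)=\sup_{x,s}p_\mu(s/t,x,x)/p_\mu(s,x,x)$. Monotonicity of $t\mapsto p_\mu(t,x,x)$ gives $G(t)\ge1$, while the estimate above together with volume doubling gives $G(t)<\infty$, so $F(t)=2\log G(t)/\log t\in[0,\infty)$. Factoring $p_\mu(s/(t_1t_2),x,x)/p_\mu(s,x,x)$ through the intermediate time $s/t_2$, which still lies in $(0,\diam(X,R))$ for $t_2>1$, yields $G(t_1t_2)\le G(t_1)G(t_2)$ for $t_1,t_2>1$; hence $\log G$ is subadditive in $\log t$, and Fekete's lemma shows $\lim_{t\to\infty}F(t)$ exists. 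It then suffices to compute $\limsup_{t\to\infty}F(t)$.

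Third, I substitute the estimate. For fixed $t$ and each $x,s$, set $r=r(x,s)$ and $r'=r(x,s/t)$, so $r'<r$ and $\log\bigl(h_{d,\mu}(x,r)/h_{d,\mu}(x,r')\bigr)=\log t+O(1)$; then
\[
\frac{\log\bigl(p_\mu(s/t,x,x)/p_\mu(s,x,x)\bigr)}{\log t}
=\frac{\log\bigl(V_{d,\mu}(x,r)/V_{d,\mu}(x,r')\bigr)+O(1)}{\log\bigl(h_{d,\mu}(x,r)/h_{d,\mu}(x,r')\bigr)+O(1)}.
\]
Since $t\to\infty$ forces $\log\bigl(h_{d,\mu}(x,r)/h_{d,\mu}(x,r')\bigr)\to\infty$ uniformly, the bounded errors wash out, and $\tfrac12F(t)$ differs by $o(1)$ from $\sup\{\Phi(x,a,b):h_{d,\mu}(x,a)/h_{d,\mu}(x,b)\asymp t\}$, where $\Phi(x,a,b)$ denotes the log-quotient in \eqref{eqdsvh} for radii $b<a$. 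Thus $\limsup_{t\to\infty}F(t)$ equals twice the $\limsup$ over $t$ of these $h$-ratio–constrained suprema.

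Finally I pass to the radius-ratio parametrization of \eqref{eqdsvh}. The decisive input is that $r\mapsto h_{d,\mu}(x,r)$ obeys two-sided polynomial control uniformly in $x$: volume doubling, reverse volume doubling (from uniform perfectness of $(X,d)$, which holds since $(X,R)$ is uniformly perfect by (ACC) and $d\qs R$), and the doubling and reverse doubling of $r\mapsto\ol{R}_d(x,r)$ (from $d\qs R$ and Lemma~\ref{resbb}) give exponents $0<\alpha\le\beta$ with $(a/b)^\alpha\lesssim h_{d,\mu}(x,a)/h_{d,\mu}(x,b)\lesssim(a/b)^\beta$. Hence $\log(a/b)\asymp\log\bigl(h_{d,\mu}(x,a)/h_{d,\mu}(x,b)\bigr)$ uniformly, so constraining the $h$-ratio to be large is equivalent, up to comparable exponents, to constraining the radius ratio $a/b$ to be large; a direct comparison then identifies the $h$-ratio–constrained $\limsup$ with $\limsup_{s\to\infty}\sup_{x,r}\Phi(x,r,r/s)$, i.e.\ half the right-hand side of \eqref{eqdsvh}. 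Combined with the previous paragraph this gives $\limsup_{t\to\infty}F(t)$ equal to the right-hand side of \eqref{eqdsvh}, and since $\lim_{t\to\infty}F(t)$ exists it equals this value. I expect the main obstacle to be precisely this last reparametrization—making the passage between the time-ratio $t$ and the radius-ratio $s$ rigorous and uniform in $x$—which is exactly what the uniform polynomial control of $h_{d,\mu}$ is designed to secure.
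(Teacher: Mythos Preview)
Your proposal is correct and follows essentially the same route as the paper: existence of the limit via subadditivity of $\log G$ in $\log t$ (the paper invokes Lemma~\ref{sub} for this), the on-diagonal estimate $p_\mu(h_{d,\mu}(x,r),x,x)\asymp V_{d,\mu}(x,r)^{-1}$, and the reparametrization between time-ratios and radius-ratios using uniform doubling and reverse doubling of $h_{d,\mu}$ (the paper packages these as Lemma~\ref{exp} and carries out the comparison via auxiliary functions $u(s),v(s)$ with an explicit $\epsilon$-argument, and treats the case $\diam(X,d)<\infty$ separately). One correction: the on-diagonal bound you need is \cite[Theorem~15.6]{Kig12}, not Theorems~9.4 and~10.4 (those give existence of $p_\mu$, which is Lemma~\ref{xhk}); the paper invokes Theorem~15.6 exactly here.
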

\begin{rem}
	We only use the case $d=R$ for the proof of Theorem \ref{main2} (recall that $R\qs R$). However, we prove general case for future works.
\end{rem}

We introduce some basic facts for the proof of Proposition \ref{dsvh}.
\begin{lem}\label{exp}
	Assume $\mu\in\Mc{M}_{(X,d)}$, then the following statements are true.
	\begin{enumerate}
		\item There exists $\gamma_1>1$ such that $V_d(x,r/\gamma_1)\le V_d(x,r)/2$ for any $x\in X$ and $r\in (0, \diam(X,d)).$
		\item $h_d(x,2r)\lesssim h_d(x,r)$ for any $x\in X$ and $r>0.$
		\item There exists $\gamma_2>1$ such that $h_d(x,r/\gamma_2)\le h_d(x,r)/2$ for any $x\in X$ and $r\in (0, \diam(X,d)).$
		\item For any $C>0,$ there exists $\gamma_C>1$ such that for any $t\in (0,C)$ and $x\in X,$ there exists $r\in (0,\diam(X,d))$ with $t/\gamma_C\le h_d(x,r)\le t$
		\item For any $x\in X,\ p_\mu(\cdot,x,x):t\mapsto p_\mu(t,x,x)$ is a decreasing function of $t.$
		\item Fix any $C'>0.$ Then $p_\mu(t/2,x,x)\lesssim p_\mu (t,x,x)$ for any $x\in X$ and $t\in (0, C').$
	\end{enumerate}
\end{lem}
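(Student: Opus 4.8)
The plan is to prove the six assertions essentially independently, using three ingredients: the volume doubling property of $\mu$, the quasisymmetry $d\qs R$ (which by Theorem \ref{xbf} makes $(X,d)$ doubling and uniformly perfect, with uniform perfectness constant $\gamma_0>1$ and quasisymmetry gauge $\theta$), and the heat kernel estimates underlying Lemma \ref{xhk}. For (1), I would derive the reverse volume doubling inequality from (VD) and uniform perfectness in the standard way: given $r<\diam(X,d)$, uniform perfectness produces $z$ with $d(x,z)\in[\rho,\gamma_0\rho)$ for $\rho=r/(2\gamma_0)$; the balls $B_d(x,\rho/2)$ and $B_d(z,\rho/2)$ are disjoint and both lie in $B_d(x,(\gamma_0+1)\rho)$, and (VD) gives $V_d(z,\rho/2)\gtrsim V_d(x,\rho/2)$, whence $V_d(x,(\gamma_0+1)\rho)\ge(1+c)V_d(x,\rho/2)$ for some $c>0$. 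This is reverse doubling with a fixed factor $1+c>1$ and ratio $\lambda=2\gamma_0(\gamma_0+1)$; iterating it finitely many times makes the factor exceed $2$, which is exactly (1) with $\gamma_1=\lambda^k$.

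For (2) I split $h_d(x,2r)=V_d(x,2r)\,\ol{R}_d(x,2r)$. The first factor obeys $V_d(x,2r)\lesssim V_d(x,r)$ by (VD). For the second, pick (uniform perfectness again) a point $z$ with $d(x,z)\in[r/(2\gamma_0),r/2)$, so $z\in B_d(x,r)$ and $R(x,z)\le\ol{R}_d(x,r)$; for any $y\in B_d(x,2r)$ one has $d(x,y)/d(x,z)<4\gamma_0$, so quasisymmetry yields $R(x,y)\le\theta(4\gamma_0)R(x,z)\le\theta(4\gamma_0)\ol{R}_d(x,r)$, and taking the supremum over $y$ gives $\ol{R}_d(x,2r)\lesssim\ol{R}_d(x,r)$; for $r\gtrsim\diam(X,d)$ the bound is trivial since both factors have stabilized. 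This proves (2). Assertion (3) is then immediate: as $\ol{R}_d(x,\cdot)$ is nondecreasing, $h_d(x,r/\gamma_1)\le V_d(x,r/\gamma_1)\,\ol{R}_d(x,r)\le\tfrac12 h_d(x,r)$ by (1), so $\gamma_2=\gamma_1$ works.

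Assertion (4) is the most delicate of the elementary items. I would first record that $V_d(x,\cdot)$ and $\ol{R}_d(x,\cdot)$ are nondecreasing and left-continuous — the former as the measure of an increasing union of open balls, the latter as a supremum over such a union — hence so is $h_d(x,\cdot)$, and $h_d(x,r)\to0$ as $r\to0$. Given $t\in(0,C)$, put $r_0=\sup\{r<\diam(X,d)\mid h_d(x,r)\le t\}$; left-continuity gives $h_d(x,r_0)\le t$, while (2) gives $h_d(x,r_0)\ge h_d(x,2r_0)/C_2>t/C_2$ provided $2r_0<\diam(X,d)$, so $\gamma_C=C_2$ suffices on this range. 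The only obstruction is the top scale, where $r_0\approx\diam(X,d)$ and $t$ is comparable to $\sup_{r<\diam}h_d(x,r)$; there I use that this supremum is either infinite (in which case $2r_0<\diam(X,d)$ always holds and the obstruction disappears) or bounded below uniformly in $x$ by $\gtrsim\mu(X)\diam(X,R)>0$ via (VD) and uniform perfectness, which lets me cover the whole interval $(0,C)$ by enlarging $\gamma_C$ depending on $C$. This is pure bookkeeping at $r\approx\diam(X,d)$ and constitutes the only mild technicality among (1)--(4).

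For (5) I would argue purely from the semigroup: writing $f_t=p_\mu(t,x,\cdot)\in L^2(X,\mu)$, symmetry and the Chapman--Kolmogorov identity give $p_\mu(2t,x,x)=\|f_t\|_{L^2}^2$ and $f_{t+s}=T_sf_t$, so $\|f_{t+s}\|_{L^2}\le\|f_t\|_{L^2}$ because $T_s$ is an $L^2$-contraction; hence $t\mapsto p_\mu(2t,x,x)$, and thus $t\mapsto p_\mu(t,x,x)$, is nonincreasing. Finally (6), which I expect to be the real crux, rests on the two-sided on-diagonal estimate supplied by \cite[Theorems 9.4 and 10.4]{Kig12}: for $t$ in the relevant range there is $r=r(x,t)<\diam(X,d)$ — produced by (4) applied with $C=C'$ — with $h_{d,\mu}(x,r)\asymp t$ and $p_\mu(t,x,x)\asymp V_{d,\mu}(x,r)^{-1}$. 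Applying this at $t$ and at $t/2$ gives radii $r_1,r_2$ with $h_{d,\mu}(x,r_1)\asymp t/2$ and $h_{d,\mu}(x,r_2)\asymp t$; by the doubling (2) and reverse doubling (3) of $h$ these radii are comparable, so $V_{d,\mu}(x,r_1)\asymp V_{d,\mu}(x,r_2)$ by (VD), and therefore $p_\mu(t/2,x,x)\lesssim p_\mu(t,x,x)$. The cutoff $t\in(0,C')$ is precisely what keeps $r(x,t)$ below $\diam(X,d)$ so that the estimate applies; the main obstacle is invoking this resistance heat kernel estimate and aligning its scale function with $h_{d,\mu}$, after which only doubling bookkeeping remains.
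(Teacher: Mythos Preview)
Your argument for (1)--(5) is correct and matches the paper's approach; for (5) you use $L^2$-contractivity of the semigroup rather than the spectral expansion $p_n(t,x,y)=\sum_k e^{-\lambda_{n,k}t}\varphi_{n,k}(x)\varphi_{n,k}(y)$ that the paper extracts from \cite[Theorem 10.4 and Lemma 10.7]{Kig12}, but both routes are standard and yours is arguably more self-contained.

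In (6) there is a citation error and a small gap. The two-sided on-diagonal estimate $p_\mu(t,x,x)\asymp V_{d,\mu}(x,r)^{-1}$ is \emph{not} supplied by \cite[Theorems 9.4 and 10.4]{Kig12}---those yield only the existence of the regular Dirichlet form and its heat kernel, which is exactly the content of Lemma~\ref{xhk}---but by \cite[Theorem 15.6]{Kig12}, and that theorem requires (ACC), so Proposition~\ref{pre} must be invoked. More substantively, when $\diam(X,d)<\infty$ and $C'$ exceeds $\sup_r h_{d,\mu}(x,r)$, your sentence ``the cutoff $t\in(0,C')$ is precisely what keeps $r(x,t)$ below $\diam(X,d)$ so that the estimate applies'' does not hold: for $t$ beyond the range of $h_{d,\mu}(x,\cdot)$ the lower bound in \cite[Theorem 15.6]{Kig12} is no longer available at scale $t$. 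The paper closes this with the elementary inequality $p_\mu(t,x,x)\ge\mu(X)^{-1}$ (an immediate consequence of Chapman--Kolmogorov and Cauchy--Schwarz), combined with the monotonicity (5) to bound $p_\mu(t/2,x,x)$ from above by its value at any fixed small time. Adding that one ingredient, your argument for (6) goes through.
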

\begin{proof}
	(1) It is well-known and easily follows from the volume doubling and uniformly perfect conditions (see \cite[Excersise 13.1]{Hei} for example).\par
	(2), (3) Since $d \qs R$ and both $d$ and $R$ are uniformly perfect, it is easy to check that there exists $\gamma'>1$ such that $\ol R_d(x,2r)\lesssim \ol R_d(x,r)$ and $\ol R_d(x,r/\gamma')\le \ol R_d(x,r)/2$ for any $x\in X$ and $r\in (0, \diam(X,d)).$ This with (1) and the volume doubling condition shows (2) and (3).\par
	(4) Since $C/\sup_{r\in (0,\diam(X,d))} h_d(x,r)\le 2C/\diam(X,d)\mu(X)$ for any $x\in X,$ it follows from (2) and (3).\par
	(5) By the proof of \cite[Theorem 10.4 and Lemma 10.7]{Kig12}, $\lim_{n\to\infty}p_n(t,x,y)=p_\mu(t,x,y)$ where $p_n(t,x,y): (0,\infty)\times X\times X\to \Mb{R}$ is of the form 
	\[p_n(t,x,y)=\sum_{k\ge1}\exp(-\lambda_{n,k}t)\varphi_{n,k}(x)\varphi_{n,k}(y)\]
	for some $\lambda_{n,k}>0$ and $\varphi_{n,k}: X\to\Mb{R}.$ Hence (5) is clear.\par
	(6) Recall that by Proposition \ref{pre}, $(X,R)$ satisfies (ACC). Thus this follows from (3)-(5), \cite[Theorem 15.6]{Kig12} and the fact that $p_\mu(t,x,x)\ge\mu(X)^{-1}$ for any $t>0,$ which follows from the Chapman-Kolmogorov equation.
\end{proof}
\begin{rem}
	The condition of Lemma \ref{exp} (1) is called reverse volume doubling condition (e.g. \cite{GHL, KM}).
\end{rem}

For the existence of $\ol{d_s}(\mu,\Mc{E}_\mu,\Mc{D}_\mu),$ we use the classical result for subadditive functions. For a proof, see \cite[Proof of Theorem 7.6.1]{HP} for instance.
\begin{lem}\label{sub}
	Let $f:(0,\infty)\to\Mb{R}$ be subadditive, that is, $f(t+s)\le f(t)+f(s)$ for any $t,s\in (0,\infty).$ Assume that $\sup_{t\in I} f(t)<\infty$ for any bounded interval $I,$ then $\lim_{t\to\infty}f(t)/t=\inf_{t>0}f(t)/t<\infty.$ 
\end{lem}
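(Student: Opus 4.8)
The statement is the continuous version of Fekete's subadditivity lemma, and the plan is to follow the classical scheme while being careful about where the local boundedness hypothesis enters. Set $L:=\inf_{t>0}f(t)/t$, which is a priori an element of $[-\infty,\infty)$; note $L<\infty$ is immediate, since $f(1)/1$ is already a finite upper bound for the infimum. The goal is to show that $\lim_{t\to\infty}f(t)/t$ exists and equals $L$, which amounts to the two inequalities $\liminf_{t\to\infty}f(t)/t\ge L$ and $\limsup_{t\to\infty}f(t)/t\le L$.

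The first inequality is trivial: by definition of the infimum, $f(t)/t\ge L$ for every $t>0$, so $\liminf_{t\to\infty}f(t)/t\ge L$. For the second, I would fix an arbitrary $s>0$ and establish the scale-free bound $\limsup_{t\to\infty}f(t)/t\le f(s)/s$. Given $t\ge s$, write $t=ns+r$ with $n=\lfloor t/s\rfloor\ge 1$ and $r\in[0,s)$. Iterating subadditivity gives $f(ns)\le nf(s)$, and one further application yields $f(t)\le nf(s)+f(r)$ when $r>0$ (and $f(t)\le nf(s)$ when $r=0$). This is the one place the hypothesis is used: since $(0,s]$ is a bounded interval, $M:=\bigl(\sup_{r\in(0,s]}f(r)\bigr)\vee 0<\infty$, so in all cases $f(t)\le nf(s)+M$ for $t\ge s$. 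Dividing by $t$ and using that $n/t=\lfloor t/s\rfloor/t\to 1/s$ as $t\to\infty$, together with $M/t\to 0$, gives $\limsup_{t\to\infty}f(t)/t\le f(s)/s$.

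Taking the infimum over $s>0$ on the right-hand side then yields $\limsup_{t\to\infty}f(t)/t\le \inf_{s>0}f(s)/s=L$. Combined with the trivial lower bound, this shows the limit exists and equals $L$; when $L=-\infty$ the same chain of inequalities reads $\limsup_{t\to\infty}f(t)/t\le-\infty$, i.e. $f(t)/t\to-\infty=L$, so no separate case analysis is needed.

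The argument is routine once the decomposition is in place, so there is no genuine obstacle; the only point requiring care is the treatment of the continuous remainder $r$, which—unlike the integer Fekete lemma, where the remainder ranges over a finite set—varies over the whole interval $(0,s)$. This is precisely why boundedness of $f$ on bounded intervals, rather than mere finiteness of $f$ at isolated points, is assumed: it is what makes the error term $f(r)/t$ uniformly controllable and vanishing as $t\to\infty$. A minor bookkeeping step is to absorb the $r=0$ case, handled above by taking the positive part in the definition of $M$.
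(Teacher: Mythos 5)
Your proof is correct, and it is essentially the classical Fekete-type argument that the paper itself does not reproduce but delegates to the cited reference \cite[Proof of Theorem 7.6.1]{HP}: decompose $t=ns+r$, iterate subadditivity, and use boundedness above on the bounded interval $(0,s]$ to kill the remainder term, yielding $\limsup_{t\to\infty}f(t)/t\le f(s)/s$ for every $s>0$. Your handling of the $r=0$ case and of the possibility $\inf_{t>0}f(t)/t=-\infty$ is also correct, so nothing is missing.
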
 
\begin{proof}[Proof of Proposition \ref{dsvh}]
	Let 
	\[f(\tau)=\log\bigl( \sup_{x\in X, s\in(0,\diam(X,d))}p_\mu(s/e^\tau,x,x)/p_\mu(s,x,x)\bigr)\]
	for $\tau>0,$ then f is subadditive by definition. By Lemma \ref{exp} (5) and (6), $\sup_{\tau\in I}f(\tau)<\infty$ for any bounded interval $I$ and $\inf_{\tau>0}f(\tau)/\tau\ge 0.$ Hence $\lim_{\tau\to\infty}f(\tau)/\tau$ and the limit $\ol{d_s}(\mu,\Mc{E}_\mu,\Mc{D}_\mu)$ exist because $\tau\to\infty$ as $t=e^\tau\to\infty.$\par
	Our next goal is to prove \eqref{eqdsvh}. To this end, let 
	\begin{align*}
		u(s)=& \sup_{\substack{x\in X, \alpha\in [s,\infty),\\ r\in (0,\diam(X,d)) }}\frac{
			\log\bigl( p_\mu(h_d(x,r/\alpha),x,x)/ p_\mu(h_d(x,r),x,x)\bigr)}{
			\log\bigl( h_d(x,r) /h_d(x,r/\alpha)\bigr)}\text{ and}\\
		v(s)=& \sup_{\substack{x\in X, \alpha\in [s,\infty),\\ r\in (0,\diam(X,d)) }}\frac{
			\log\bigl( p_\mu(t/\alpha,x,x)/ p_\mu(t,x,x)\bigr)}{
			\log\alpha}.
	\end{align*} 
	By \cite[Theorem 15.6]{Kig12}, Proposition \ref{pre} and Lemma \ref{exp}(3), the right hand side of \eqref{eqdsvh} equals $\lim_{s\to\infty}u(s),$ hence it is sufficient to show $\lim_{s\to\infty}u(s)=\lim_{s\to\infty}v(s).$ We proceed to show the following claim.
	\begin{cla}
		For any $\epsilon>0,$ there exists $s_0(\epsilon)$ such that for any $s>s_0(\epsilon),$ there exists $s_*(s,\epsilon)$ satisfying $(1+\epsilon)(\epsilon+u(s))\ge v(s_*(s,\epsilon))$
	\end{cla}
 This claim implies $\lim_{s\to\infty}u(s)\ge\lim_{s\to\infty}v(s)$ because both $u$ and $v$ are decreasing.
 \begin{proof}
 For any $\epsilon>0$ and $s>1,$ we can take $C_1,...,C_4>1$ satisfying the following conditions by Lemma \ref{exp} (2), (4)-(6).
 \begin{itemize}
 	\item If $x\in X,\ r\in (0,\diam(X,d))$ and $\beta>0$ satisfy $h_d(x,r)/h_d(x,r/\beta)>C_1,$ then $\beta>s.$ 
 	\item For any $x\in X$ and $t\in (0,\diam(X,d)),$ there exists $r\in (0,\diam(X,d))$ with $t/C_2\le h_d(x,r)\le t.$
 	\item Any $x\in X$ and $t_1,t_2\in (0,\diam(X,d))$ with $t_1\le C_2t_2$ satisfy $C_3\ge \log (p_\mu(t_2,x,x)/p_\mu(t_1,x,x)).$
 	\item $C_4>C_2,$ $C_3/\log C_4<\epsilon$ and $\log C_4/(\log C_4- \log C_2)<1+\epsilon.$
 \end{itemize}  
Let $x\in X,\ t\in (0,\diam(X,d))$ and $\alpha>C_2(C_1\vee C_4)=:s_*$ We take $r_1, r_2\in (0,\diam(X,d))$ such that $t/C_2\le h_d(x,r_1)\le t\text{ and }t/C_2\alpha\le h_d(x,r_2)\le t/\alpha.$ Then $C_2\alpha>h_d(x,r_1)/h_d(x,r_2)>C_1\vee C_4$ and so for 
\[l_1:=\log \biggl( \frac{p_\mu(h_d(x,r_2),x,x) }{p_\mu(h_d(x,r_1),x,x)}\biggr)\quad \text{and}\quad l_2:=\log \biggl( \frac{p_\mu(h_d(x,r_2),x,x) }{p_\mu(h_d(x,r_1),x,x)}\biggr),\]
it follows that
\begin{align*}
		&\log\bigl(p_\mu(t/\alpha,x,x)/p_\mu(t,x,x) \bigr)/\log\alpha \\
	\le&\big(l_1+\log\bigl( p_\mu(h_d(x,r_1),x,x)/p_\mu(t,x,x) \bigr)\bigr)\big/\bigl(l_2-\log C_2\bigr) \\
	\le&\bigl((l_1/l_2)+(C_3/l_2))l_2\big/\bigl(l_2-\log C_2\bigr)\\
	\le&\bigl((l_1/l_2)+(C_3/\log C_4))\log C_4\big/\bigl(\log C_4-\log C_2\bigr).
\end{align*} 
Therefore $v(s_*)\le (1+\epsilon)(\epsilon+u(s))$ by $r_1/r_2>s,$ and the claim follows.
\end{proof}
For $\lim_{s\to\infty}u(s)\le\lim_{s\to\infty}v(s),$ we consider the case $\diam(X,d)<\infty$ and fix any $\epsilon>0.$ Then by Lemma \ref{exp} (5) and (6), there exists $s>1$ with
\[\sup_{\substack{x\in X,\\r\in(0,\diam(X,d))}}\log \biggl( \frac{p_\mu(\diam(X,d),x,x)}{p_\mu(h_d(x,r),x,x)}\biggr)<\epsilon \log s\]
because 
\begin{equation}\label{eq3}
	\sup_{x\in X, r\in (0,\diam(X,d))}\bigl( h_d(x,r)/\diam(X,d) \bigr)\le \mu(X)<\infty.
\end{equation} 
By \eqref{eq3} and Lemma \ref{exp} (3), we can take $s'>1$ such that if $\alpha>s'$ then 
$\bigl(h_d(x,r)\wedge\diam(X,d) \bigr)\big/ h_d(x,r/\alpha)>s$ for any $x\in X$ and $r\in (0,\diam(X,d)).$ Thus we obtain
\begin{align*}
	&\log \biggl( \frac{p_\mu(h_d(x,r/\alpha),x,x) }{p_\mu(h_d(x,r),x,x)}\biggr)\Bigg/ 
		\log \biggl( \frac{h_d(x,r)}{h_d(x,r/\alpha)}\biggr)\\
	\le&\Biggl(\log \biggl( \frac{p_\mu(h_d(x,r/\alpha),x,x) }{p_\mu(h_d(x,r)\wedge \diam(X,d),x,x)}\biggr)\Biggm/
		\log \biggl( \frac{h_d(x,r)\wedge\diam(X,d) }{h_d(x,r/\alpha)}\biggr)\Biggr)+\epsilon
\end{align*} 
and $u(s')<v(s)+\epsilon.$ This shows $\lim_{s\to\infty}u(s)\le\lim_{s\to\infty}v(s)$ in the same way as the inverse direction. The proof for the case $\diam(X,d)=\infty$ is similar, and the proposition follows.
\end{proof}
\section{Partition satisfying basic framework}\label{secpart}
In the former part of this section, we introduce the notion and related results of the \textit{partition satisfying the basic framework,} which is defined in \cite{Kig20} for the bounded case and extended to unbounded cases in \cite{Sas1}. In the latter part of this section, we show some related resistance estimates. Note that we continue to make Assumptions \ref{ass}.
\begin{defi}[Tree with a reference point]
	Let $T$ be a countable set and $\pi:T\to T$ be a map such that the following conditions hold.
	\begin{align*}
		\bullet\ & \text{Let }F_\pi=\{w\mid \pi^n(w)=w\text{ for some }n\ge1\},\text{ then }\# F_\pi\le1.  \\
		\bullet\ & \text{For any }w,v\in T,\text{ there exist }n,m\ge0\text{ such that }\pi^n(w)=\pi^m(v). 
	\end{align*}
	Let $\phi\in F_\pi$ if $F_\pi\ne\emptyset,$ otherwise we fix any $\phi\in T.$ We call the triplet $(T,\pi,\phi)$ a \textit{tree with a reference point}. 
\end{defi}
The above definition is justified as follows.
\begin{lem}[{\cite[Lemma 3.2]{Sas3}}]\hspace{1pt}\\ \vspace{-1\baselineskip} \begin{enumerate}
	\item Let $b(w,v)=\min\{n\ge0| \pi^n(w)=\pi^m(v)\text{ for some }m\ge0\}$ for $w,v\in T,$ then $\pi^{b(w,v)}(w)=\pi^{b(v,w)}(v).$
	\item Let $\Mc{A}=\{(w,v)\mid \pi(w)=v \text{ or }\pi(v)=w\}\setminus\{(\phi,\phi)\},$ then $(T,\Mc{A})$ is a tree.
\end{enumerate}
\end{lem}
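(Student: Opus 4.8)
The plan is to reduce the lemma to two claims and to exploit throughout the hypothesis $\# F_\pi\le 1$, which forces a rigid structure. If $w\in F_\pi$, say $\pi^n(w)=w$ with $n\ge 1$, then $w,\pi(w),\dots,\pi^{n-1}(w)$ are all periodic, so $\# F_\pi\le 1$ already forces $n=1$; hence any periodic point is a fixed point, and when $F_\pi\ne\emptyset$ its unique element is $\phi$ with $\pi(\phi)=\phi$, in which case every forward orbit $O(w):=\{\pi^n(w)\mid n\ge 0\}$ eventually reaches $\phi$ and stays there. The two claims are then: (1) the meeting points of $O(w)$ and $O(v)$ coincide, and (2) $(T,\Mc{A})$ is connected and acyclic.

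For (1) I would set $p:=\pi^{b(w,v)}(w)$ and $q:=\pi^{b(v,w)}(v)$. By definition $p=\pi^{m_0}(v)$ and $q=\pi^{m_1}(w)$ for some $m_0,m_1\ge 0$, so $p\in O(v)$ and $q\in O(w)$. Since $\pi^{m_1}(w)=q\in O(v)$, minimality of $b(w,v)$ gives $m_1\ge b(w,v)$, whence $q=\pi^{a}(p)$ with $a:=m_1-b(w,v)\ge 0$; symmetrically $p=\pi^{c}(q)$ with $c:=m_0-b(v,w)\ge 0$. Combining, $\pi^{a+c}(p)=p$. If $a+c=0$ then $p=q$ directly; if $a+c>0$ then $p$ is periodic, hence $p=\phi$ with $\pi(\phi)=\phi$, so $q=\pi^a(\phi)=\phi=p$ as well. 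Either way $p=q$, which is the assertion. The only delicate point is the correct bookkeeping of the two minimality inequalities, which I have isolated above.

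For (2), connectedness is immediate from (1): concatenating $w\to\pi(w)\to\cdots\to p$ with $p=q\leftarrow\cdots\leftarrow\pi(v)\leftarrow v$ gives a path in $\Mc{A}$, since each consecutive pair of distinct iterates is an edge and the excluded loop $(\phi,\phi)$ is never traversed. The substantive part is acyclicity. I would take a simple cycle $x_0,\dots,x_{\ell-1},x_0$ with $\ell\ge 3$ (the graph is loopless and simple) and orient each edge from child to parent, using that for an edge $\{x_i,x_{i+1}\}$ exactly one of $\pi(x_i)=x_{i+1}$, $\pi(x_{i+1})=x_i$ holds: both cannot hold, as $\pi^2(x_i)=x_i$ and $\pi^2(x_{i+1})=x_{i+1}$ would give two distinct periodic points, contradicting $\# F_\pi\le 1$. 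Since $\pi$ is a function, each vertex is the child endpoint of at most one of its two cycle edges, so every out-degree is $\le 1$; as the $\ell$ edges distribute $\ell$ units of out-degree among the $\ell$ vertices, every out-degree must equal $1$, forcing a consistently directed cycle $\pi(x_0)=x_1,\dots,\pi(x_{\ell-1})=x_0$. Then $\pi^{\ell}(x_0)=x_0$ exhibits $\ell\ge 3$ distinct periodic points, again contradicting $\# F_\pi\le 1$.

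The main obstacle I anticipate is the acyclicity argument: one must make precise that the child-to-parent orientation is well defined on every cycle edge, and that the degree count correctly invokes $\# F_\pi\le 1$ both to exclude length-$2$ alternations and to close the final contradiction. Part (1), by contrast, should be routine once the downstream relations $q=\pi^a(p)$ and $p=\pi^c(q)$ are established, and connectedness follows from (1) with essentially no extra work.
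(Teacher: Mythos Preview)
Your proof is correct. Note, however, that the paper does not actually prove this lemma: it is stated with a citation to \cite[Lemma 3.2]{Sas3} and no argument is given in the present paper. So there is no ``paper's own proof'' to compare against here.

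On the mathematics itself, your argument is clean and complete. A few remarks that may help you tighten the write-up:

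For part (1), you might add explicitly why $m_0\ge b(v,w)$ (symmetric to the inequality $m_1\ge b(w,v)$ you justify): since $p=\pi^{m_0}(v)$ and $p\in O(w)$, minimality of $b(v,w)$ gives $m_0\ge b(v,w)$, hence $c\ge 0$. You use this implicitly.

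For connectedness in (2), it is worth recording why consecutive iterates $\pi^i(w),\pi^{i+1}(w)$ with $i<b(w,v)$ are distinct: if they coincided, $\pi^i(w)$ would be fixed, hence equal to $p=\pi^{b(w,v)}(w)$, and then $\pi^i(w)\in O(v)$ would contradict minimality of $b(w,v)$. You allude to this but do not spell it out.

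For acyclicity, your degree-counting argument is correct; the only point to make explicit is that in a simple cycle of length $\ell\ge 3$ the two neighbours $x_{i-1},x_{i+1}$ of $x_i$ are distinct, which is what guarantees $\pi(x_i)$ can coincide with at most one of them.
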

From now on we assume $(T,\pi,\phi)$ to be a tree with a reference point. We define $[w]=b(w,\phi)-b(\phi,w), \ T_n=\{w\in T\mid [w]=n\}$ for any $w\in T$ and $n\in\Mb{Z}.$ By abuse of notation, we write $\acute\pi^{-k}(w)$ instead of $\pi^{-k}(w)\cap T_{[w]+k}.$ Note that $\acute\pi^{-k}(w)\ne\pi^{-k}(w)$ if and only if $F_\pi\ne\emptyset$, $w=\phi$ and $k\ge1.$ We also define $T^w=\cup_{k\ge0}\acute\pi^{-k}(w).$

\begin{defi}[Partition]\label{DefPart}
		Let $(Y,\rho)$ be a ($\sigma$-compact) metric space without isolated points. We say $K:T\to \Mf{P}(Y)$ is a \emph{partition} of $(Y,\rho)$ parametrized by $(T,\pi,\phi)$ if the following conditions hold.
	\begin{align*}
		\bullet\ & \text{For any }w\in T,\ K(w)\text{ is a compact set, neither a single point nor empty.}\\
		\bullet\ &\Cup_{w\in(T)_0}K(w)=Y\text{ and for any }w\in T,\ \Cup_{v\in\acute\pi^{-1}(w)}K(v)=K(w). \\
		\bullet\ &\text{If }(w_k)_{k\in\Mb{Z}}\subset T\text{ satisfies }\pi(w_{k+1})=w_k \text{ for any }k\in\Mb{Z},\text{ then}\cap_{k\in\Mb{Z}}K(w_k)\\
		&\text{is a single point.} \notag
	\end{align*}
Hereafter, we write $K_w$ instead of $K(w)$ for simplicity.
\end{defi}
\begin{rem}
	The condition that $K_w$ has no isolated points, assumed in \cite[Definition 2.2.1]{Kig20}, follows from Definition \ref{DefPart} (see \cite[Lemma 3.6]{Sas3}).
\end{rem}

\begin{defi}[Basic framework]\label{bf}
	Let $(T,\pi,\phi)$ be a tree with a reference point satisfying $\sup_{w\in T}\#(\acute\pi^{-1}(w))<\infty,$ and  $K$ be a partition of a metric space $(Y,\rho)$ parametrized by $(T,\pi,\phi).$ Let
	\[E_n=\{(w,v)\in T_n\times T_n\mid K_w\cap K_v\ne\emptyset,\ w\ne v\}\]
	and let $l_n$ denote the graph distance of $(T_n,E_n)$ allowing $l_n(w,v)=\infty.$ We say $K$ satisfies the \emph{basic framework} if the following conditions hold.
	\begin{align}
		\bullet\ & \nai(K_w)\cap\nai(K_v)=\emptyset\text{ for any }w,v\in T\text{ with }[w]=[v]\text{ and }w\ne v.\\
		\bullet\ & \text{There exists }\zeta\in(0,1)\text{ such that } \diam_\rho(K_w)\asymp \zeta^{[w]}\text{ for any }w\in T.\label{B1}\\
		\bullet\ & \text{There exists }\xi>0\text{ such that  for each }w\in T,\ B_\rho(x_w,\xi\zeta^{[w]})\subset K_w\notag\\ &\text{for some }x_w\in K_w.\label{B2}\\
		\bullet\ & \text{Let }\Delta_{m}(x,y)\!=\!\sup\{n\!\mid\! x\in K_w, y\in K_v \text{ and }l_n(w,v)\le m\text{ for some }w,v\in T_n\} \notag\\
		&\text{then }\rho(x,y)\asymp \zeta^{\Delta_{M_*}(x,y)}\text{ for any }x,y\in X, \text{ for some } M_*\in\Mb{N}. \label{B3}\\
		\bullet\ &\textstyle L_*:=\sup_{w\in T}\#(\{v\mid (w,v)\in E_{[w]} \})<\infty \label{B4}
	\end{align}
\end{defi}
\begin{rem}
	\begin{enumerate}
		\item The formulation in Definition \ref{bf} differs from the original one in \cite[Section 4.3]{Kig20}, for the reader's convenience. However it follows from \eqref{B1} and \cite[Proposition 3.2.1]{Kig20} that the above definition is equivalent to the original one.
		\item By \eqref{B1}, $\diam(X,d)<\infty$ if $\pi(\phi)=\phi$ and otherwise $\diam(X,d)=\infty.$
	\end{enumerate}
\end{rem}
For the existence of a partition of the
 given metric space satisfying the basic framework, there is the following result.
\begin{thm}[{\cite[Theorem 3.12]{Sas3}}]
	Let $(Y,\rho)$ be a complete metric space. Then the following conditions are equivalent.
\begin{enumerate}\label{xbf}
	\item $\ard(Y,\rho)<\infty.$
	\item $(Y,\rho)$ is doubling and uniformly perfect.
	\item There exist a tree with a reference point $(T,\pi,\phi)$ and a partition $K$ of $(Y,\rho)$ such that $K$ satisfies the basic framework with respect to $\rho.$
\end{enumerate}
\end{thm}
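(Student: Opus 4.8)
The plan is to establish the cycle of implications $(1)\Rightarrow(2)\Rightarrow(3)\Rightarrow(1)$, where the last step carries essentially all of the difficulty.

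For $(1)\Rightarrow(2)$ I would argue softly. If $\ard(Y,\rho)<\infty$ then there is a finite $\alpha$ and an $\alpha$-Ahlfors regular metric $\rho'$ with $\rho\qs\rho'$. Comparing the $\Mc{H}_\alpha$-mass of a ball with those of the balls covering it shows that any Ahlfors regular space is doubling, and since $\alpha>0$ forces $\Mc{H}_\alpha$ to charge every nonempty annulus, $(Y,\rho')$ is also uniformly perfect. Both the doubling property and uniform perfectness are quasisymmetric invariants (see \cite{Hei}), so they descend from $(Y,\rho')$ to $(Y,\rho)$.

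For $(2)\Rightarrow(3)$ I would build the partition by hand from a Christ-type system of dyadic cubes. Using that $(Y,\rho)$ is doubling and complete, I would produce, for each $n\ge0$, a Borel partition of $Y$ into ``cubes'' of diameter $\asymp\zeta^{n}$ that is nested in $n$; the nesting defines the parent map $\pi$ and hence the tree $(T,\pi,\phi)$, and the cubes, after passing to closures, become the sets $K_w$. Doubling yields the finite branching $\sup_{w}\#(\acute\pi^{-1}(w))<\infty$ and the bound \eqref{B4}; uniform perfectness supplies the lower diameter bound in \eqref{B1} together with the inscribed-ball condition \eqref{B2}; and completeness guarantees that a nested intersection of cubes is a single point. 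The delicate point is \eqref{B3}: one must fix $M_*$ so that $\rho(x,y)\asymp\zeta^{\Delta_{M_*}(x,y)}$. The inequality $\rho(x,y)\lesssim\zeta^{\Delta_{M_*}(x,y)}$ comes from chaining at most $M_*$ adjacent cubes of diameter $\asymp\zeta^{n}$, while the reverse inequality---that $\rho$-close points sit in cubes at comparable level joined by a short $E_n$-path---follows from \eqref{B2} and doubling, which together bound the number of level-$n$ cubes that can meet a ball of radius $\asymp\zeta^{n}$.

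The implication $(3)\Rightarrow(1)$ is the main obstacle, since it must manufacture an Ahlfors regular metric $\rho''$ with $\rho\qs\rho''$. First note that the inscribed ball of radius $\asymp\xi\zeta^{[w]}$ inside $K_w$ cannot be covered by a single child (of diameter $\asymp\zeta^{[w]+1}$), so every cube has at least two children and the branching lies in $[2,N_*]$ with $N_*=\sup_w\#(\acute\pi^{-1}(w))$. The naive attempt of keeping the cube-sizes uniform at each level only snowflakes $\rho$, and a metric with $\diam(K_w)\asymp\beta^{[w]}$ is Ahlfors $Q$-regular only for a measure with $m(K_w)\asymp\beta^{Q[w]}$ uniformly in $w$; the consistency relation $m(K_w)=\sum_{v\in\acute\pi^{-1}(w)}m(K_v)$ would then force the branching to be essentially constant, which it is not. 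I would therefore search for a weight $g:T\to(0,\infty)$ that $(a)$ decays geometrically along every branch, so that the induced metric $\rho_g$ with $\diam_{\rho_g}(K_w)\asymp g(w)$ is a genuine metric comparable to the partition; $(b)$ is gentle, i.e.\ $g(w)\asymp g(v)$ whenever $l_{[w]}(w,v)\le M_*$, which is exactly what forces $\rho_g\qs\rho$; and $(c)$ matches a consistent measure $m$ with $m(K_w)\asymp g(w)^{Q}$ for some finite $Q$. Properties $(a)$ and $(c)$ are easy to arrange simultaneously (take $g=m^{1/Q}$ for the measure that splits each cube's mass equally among its children), but such a $g$ need not be gentle; conversely a gentle $g$ need not be consistent with a measure. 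Reconciling $(b)$ and $(c)$ is the crux, and it is precisely here that the combinatorial $p$-modulus machinery underlying $\ard$ (\cite{Car,Kig20}) must be invoked: one takes $Q$ at, or just above, the critical modulus exponent and a corresponding modulus-optimal weight, which is simultaneously gentle and volume-consistent. The bounded branching $N_*$ and the fixed ratio $\zeta$ furnish an a priori finite upper bound for the admissible exponent $Q$, and this is what finally yields $\ard(Y,\rho)\le Q<\infty$, closing the cycle.
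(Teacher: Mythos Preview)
The paper does not prove this theorem: it is quoted verbatim from \cite[Theorem 3.12]{Sas3}, and the remark immediately following it records that $(1)\Leftrightarrow(2)$ is classical (\cite[Theorem 13.3 and Corollary 14.15]{Hei}), while the bridge to $(3)$ is the content of \cite{Sas3} (extending \cite{Kig20} from the compact case). So there is no in-paper proof to compare your proposal against; you have written a sketch of the cited literature, not of anything done here.

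With that caveat, your outline tracks the actual arguments reasonably well. Two places where your sketch is loose relative to the sources: for $(2)\Rightarrow(3)$, a Christ-type cube system gives a rooted tree and handles only the bounded case, whereas Definition~\ref{bf} allows $\pi(\phi)\neq\phi$ to accommodate $\diam(Y,\rho)=\infty$; producing a partition parametrized by a bi-infinite tree in the unbounded case is one of the points of \cite{Sas1,Sas3} and is not automatic from the standard cube construction. For $(3)\Rightarrow(1)$, your identification of the difficulty---finding a weight that is simultaneously gentle (for quasisymmetry) and volume-consistent (for Ahlfors regularity)---is exactly right, and the resolution via combinatorial modulus from \cite{Car,Kig20} is the correct engine; but your final sentence overstates what $N_*<\infty$ and $\zeta<1$ buy by themselves. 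They do give a finite upper bound (essentially the Assouad dimension $-\log N_*/\log\zeta$), but showing that the modulus-optimal weight actually has all three properties $(a)$--$(c)$ at some finite $Q$ is the substantial analytic work of \cite{Kig20}, not a soft consequence of bounded branching.
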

\begin{rem}
	\begin{enumerate}
		\item In \cite{Sas1,Sas2,Sas3}, the definition of the Ahlfors regular conformal dimension was slight different in order to consider that of a discrete metric space. This difference required the additional assumption that ($(Y,\rho)$ is) ``without isolated points'' in the original statement of  \cite[Theorem 3.12]{Sas3}.
		\item The equivalence between (1) and (2) was well-known (see \cite[Theorem 13.3 and Corollary 14.15]{Hei}, for example).
	\end{enumerate}
\end{rem}
We also note that we can choose $\{x_w\}_{w\in T_n}$ as an increasing sequence of sets.
\begin{lem}\label{partsp}
	Let $K$ be a partition of $(Y,\rho),$ parametrized by $(T,\pi,\phi)$ satisfying the basic framework. Then there exist $\{x_w\}_{w\in T}$ satisfying \eqref{B2} and for any $n\le m,$ $\cup_{w\in T_n}\{x_w\}\subset \cup_{w\in T_m}\{x_w\}.$ 
\end{lem}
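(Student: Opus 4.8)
The key observation is that the nesting condition is rigid: if a point $p=x_w$ with $[w]=n$ is used as a center, then for every $m\ge n$ it must reappear as some $x_v$ with $v\in T_m$, and in particular $B_\rho(p,\xi\zeta^m)\subset K_v$ for the (then necessarily unique) $v\in T_m$ with $p\in\nai(K_v)$. Thus I must only ever use points that are \emph{interior at all scales}: a point $p\in K_u$ is of this kind (with constant $\xi$) if $B_\rho(p,\xi\zeta^l)\subset K_{u_l}$ for every $l\ge[u]$, where $u_l$ denotes the unique cell in $T_l$ containing $p$ in its interior. The plan is therefore to (A) produce, in every cell, such a point with a constant $\xi$ independent of the cell, and then (B) assemble these points into a nested family by a downward induction along the tree.

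For (A), fix $c_2$ with $\diam_\rho(K_w)\le c_2\zeta^{[w]}$ from \eqref{B1} and the point $z_w\in K_w$ with $B_\rho(z_w,\xi_0\zeta^{[w]})\subset K_w$ from \eqref{B2}, and choose $N\in\Mb{N}$ with $c_2\zeta^N\le\xi_0/4$. The elementary \emph{deep descendant} estimate is that, for a cell $u$ of level $k$, any $u'\in\acute\pi^{-N}(u)$ with $z_u\in K_{u'}$ satisfies $K_{u'}\subset B_\rho(z_u,c_2\zeta^{k+N})\subset B_\rho(z_u,(\xi_0/4)\zeta^k)$, whence every $p\in K_{u'}$ obeys $B_\rho(p,(\xi_0/2)\zeta^k)\subset K_u$. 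Iterating this, I would build a chain $u=u_0,u_1,u_2,\dots$ with $u_{i+1}\in\acute\pi^{-1}(u_i)$, obtained by selecting such deep descendants at the levels $k,k+N,k+2N,\dots$ and filling in the intermediate cells as the unique ancestors; the limit point $x$ (the single point of $\cap_i K_{u_i}$) then satisfies $B_\rho(x,(\xi_0/2)\zeta^{k+jN})\subset K_{u_{jN}}$ for every $j\ge0$. Transferring this bound from level $k+(j+1)N$ down to an intermediate level $l=k+jN+s$ ($0\le s<N$) via $K_{u_{(j+1)N}}\subset K_{u_l}$ and $\zeta^{N-s}\ge\zeta^N$ yields $B_\rho(x,\xi\zeta^l)\subset K_{u_l}$ for all $l\ge k$, with the uniform constant $\xi:=(\xi_0/2)\zeta^N$. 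This $x$ is the desired interior-at-all-scales point of $K_u$.

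For (B), I would first treat the rooted case $\pi(\phi)=\phi$, proceeding by downward induction on $[w]\ge0$: set $x_\phi$ to be an interior-at-all-scales point of $K_\phi$, and for each $w$ put $x_w:=x_{\pi(w)}$ if $x_{\pi(w)}\in K_w$, and otherwise let $x_w$ be a fresh interior-at-all-scales point of $K_w$. Since an interior-at-all-scales point lies in the interior of exactly one child of its cell, each center is inherited by exactly one child; hence the rule is well defined and gives $\cup_{w\in T_n}\{x_w\}\subset\cup_{w\in T_{n+1}}\{x_w\}$, and so the full nesting. In the unbounded case I would exhaust $T$ by the increasing subtrees $T^{\pi^m(\phi)}$ rooted at the ancestors $\pi^m(\phi)$ of $\phi$ (note $T=\cup_{m\ge0}T^{\pi^m(\phi)}$ since $\pi^{b(w,\phi)}(w)=\pi^{b(\phi,w)}(\phi)$), run the same downward induction on each, and freeze the assignments already made at earlier stages, defining $x_w$ only on the newly appearing cells; compatibility of the stages is then automatic and their union is the required family on all of $T$.

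The main obstacle is step (A): a point lying deep inside a cell can nonetheless sit arbitrarily close to the boundary of one of its finer subcells, so a naive one-step descent cannot maintain a uniform interior radius — this is exactly why the level-by-level ``keep the same center'' idea breaks down, and why the constant produced by a single-level argument degenerates when $\zeta$ is not small. The look-ahead through $N$ levels, combined with $\zeta<1$, is what restores a single constant $\xi$. A secondary, purely bookkeeping, difficulty is to organize the downward induction of step (B) in the unbounded case, where the levels are unbounded below and there is no global root; the spine exhaustion $T=\cup_{m\ge0}T^{\pi^m(\phi)}$ resolves this.
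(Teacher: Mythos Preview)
Your step (A) is essentially identical to the paper's construction: the paper also fixes $k$ with $\diam_\rho(K_v)\le(\xi/2)\zeta^{[v]-k}$, defines $f(w)\in\acute\pi^{-k}(w)$ with $x_w\in K_{f(w)}$, and sets $y_w=\cap_{n\ge0}K_{f^n(w)}$, which is exactly your iterated deep-descendant limit. The key difference is in how the two assemblies are organized. The paper defines $y_w$ \emph{directly} for every $w$ on the $k$-grid $\cup_n T_{kn}$ (over all $n\in\Mb{Z}$), and the nesting between $k$-grid levels is then immediate from $y_w=y_{f(w)}$; intermediate levels are filled in by setting $y_w=y_v$ for a suitable child $v$. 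This handles the bounded and unbounded cases uniformly, with no separate spine argument.

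Your step (B), by contrast, is a top-down induction, and the unbounded case is where your argument has a genuine gap. When you pass from stage $m-1$ to stage $m$ and pick a fresh interior-at-all-scales point $x_{\pi^m(\phi)}$, its descent chain at level $-(m-1)$ may land precisely in the already-frozen cell $\pi^{m-1}(\phi)$. In that event no child of $\pi^m(\phi)$ inherits $x_{\pi^m(\phi)}$ (the only candidate is frozen with a different value), so $x_{\pi^m(\phi)}\notin\cup_{v\in T_{-(m-1)}}\{x_v\}$ and the nesting fails. Thus ``compatibility of the stages is then automatic'' is not correct as stated. The fix is easy but must be made explicit: run the step-(A) iteration for $K_{\pi^m(\phi)}$ starting from the center $z_v$ of some sibling $v$ of $\pi^{m-1}(\phi)$ rather than from $z_{\pi^m(\phi)}$ itself (this costs one extra factor of $\zeta$ in the constant), so that the chain stays in the new branches. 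Alternatively, adopt the paper's organization, which sidesteps the issue entirely by not relying on a global root.
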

\begin{rem}
	It is obvious that $\{x_w\mid w\in T_n\cap T^{\pi^n(\phi)}\}_{n\ge0}$ is a spread sequence.
\end{rem}
\begin{proof}
	Let $\{x_w\}_{w\in T}$ be points satisfying \eqref{B2}. By \eqref{B1}, there exists $k\ge1$ such that  $g_\rho(K_w)\le \xi\zeta^n/2$ for any $w\in T_{n+k}.$ We can define $f:\cup_{n}T_{kn}\to\cup_{n}T_{kn}$ such that $f(w)\in T_{[w]+k}$ and $x_w\in K_{f(w)}.$ For $w\in\cup_{n}T_{kn},$ let $y_w$ be the unique point with $y_w\in\cap_{n\ge0}K_{f^n(w)}.$ Then $\cup_{w\in T_{kn}}\{y_w\}\subset \cup_{w\in T_{km}}\{y_w\}$ for $n\le m$ and
	\[B_\rho(y_w, \frac{\xi}{2}\zeta^{[w]})\subset B_\rho(x_w, \xi\zeta^{[w]})\subset K_w\]
	for $w\in\cup_{n}T_{kn}.$ For $w\in \cup_{n}T_{kn-m}\ (k>m>0),$ we define $y_w$ by induction on $m.$ Let $y_w=y_v$ for some $v\in\acute\pi^{-1}(w)$ such that $v=\pi^{m-1}\circ f\circ \pi^{k-m}(w)$ whenever $w=\pi^m\circ f\circ \pi^{k-m}(w).$  Then  we obtain
	\begin{gather*}
		\cup_{w\in T_{kn-(m-1)}}\{y_w\}\supset \cup_{w\in T_{kn-m}}\{y_w\}\supset \cup_{w\in T_{k(n-1)}}\{y_w\}
		\shortintertext{and}
		B_\rho(y_w, \frac{\xi\zeta^k}{2}\zeta^{[w]})\subset B_\rho(y_v, \frac{\xi}{2}\zeta^{[v]})\subset K_v\subset K_w
	\end{gather*}
	for some $v\in\acute\pi^{-m}(w).$ This shows $\{y_w\}_{w\in T}$ is the desired set of points.
\end{proof}
 We are now able to introduce the precise definitions of $\ol{d}^s_p$ and $\ul{d}^s_p.$ For the rest of this section, we assume that $K$ denotes a partition of a metric space $(Y,\rho)$ parametrized by $(T,\pi,\phi)$ satisfying the basic framework, with points $\{x_w\}_{w\in T}$ satisfying \eqref{B2} such that for any $n\le m,$ $\cup_{w\in T_n}\{x_w\}\subset \cup_{w\in T_m}\{x_w\}.$
 \begin{defi}[$p$-spectral dimensions]\label{pspec}
 Let
 \begin{align*}
 	N_*&=\limsup_{k\to\infty}\Bigl(\sup_{w\in T}\#(\acute\pi^{-k}(w))\Bigr)^{1/k},\\
 	\Mc{E}^p_n(f)&=\frac{1}{2}\sum_{(x,y)\in E_n}|f(x)-f(y)|^p,\\
 	\Mc{C}_{w,k}&=\{v\in T_{[w]+k}\mid  l_{[w]}(w,\pi^k(v))>M_*\}\text { and}\\
 	\Mc{E}_{p,k,w}&=\inf\{\Mc{E}^p_n(f)\mid f\in\ell(T_{[w]+k}),\ f|_{\acute\pi^{-k}(w)}\equiv1,\ f|_{\Mc{C}_{w,k}}\equiv 0\}
 \end{align*}
 for any $n,$ $f\in\ell(T_n),$ $w\in T$ and $p>0.$ (In particular for $p=2,$  $\Mc{E}_{2,k,w}=(\Omega_{[w]+k}(\acute\pi^{-k}(w),\Mc{C}_{w,k}))^{-1}$ where $\Omega_n$ is the standard graph resistance of $(T_n,E_n)$.) We define the \emph{upper $p$-spectral dimensions} of the partition $K$ for $p>0$ by
 \begin{equation}\label{uds}
 	 \ol{d}^s_p(K)=p\biggl(1-\frac{\limsup_{k\to\infty}\frac{1}{k}\bigl(\sup_{w\in T}\log\Mc{E}_{p,k,w}\bigr) }{\log N_*}\biggr)^{-1}
 \end{equation}
and the \emph{lower $p$-spectral dimensions} $\ul{d}^s_p(K)$ for $p>0$ by \eqref{uds} but replacing $\limsup$ by $\liminf.$
 \end{defi}
 \begin{rem} 
 	In the same way as in the proof of Proposition \ref{dsvh}, we have 
 	\begin{equation}\label{ninf}
 	N_*=\lim_{k\to\infty}\bigl(\sup_{w\in T}\#(\acute\pi^{-k}(w))\bigr)=\inf _{k\ge0}\bigl(\sup_{w\in T}\#(\acute\pi^{-k}(w))\bigr)
 	\end{equation}
 	because $\bigl(\sup_{w\in T}\#(\acute\pi^{-j}(w))\bigr)\bigl(\sup_{w\in T}\#(\acute\pi^{-k}(w))\bigr)\ge\bigl(\sup_{w\in T}\#(\acute\pi^{-(j+k)}(w))\bigr)$ for any $j,k\ge0.$
 	
 \end{rem}
The following is the main result of \cite{Kig20}, which leads to Theorem \ref{Kmain}.
\begin{thm}[{\cite[Theorems 4.6.9]{Kig20} and \cite[Theorem 3.9]{Sas1}}]
	\begin{align*}
		\ard(Y,\rho)&=\inf\{p\mid\liminf_{k\to\infty}(\sup_{w\in T}\Mc{E}_{p,k,w})=0 \}\\ &=\inf\{p\mid\limsup_{k\to\infty}(\sup_{w\in T}\Mc{E}_{p,k,w})=0 \}.
	\end{align*}

\end{thm}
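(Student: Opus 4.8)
The plan is to prove that both infima equal $\ard(Y,\rho)$. Write $\ul{p}=\inf\{p\mid\liminf_{k\to\infty}\sup_{w\in T}\Mc{E}_{p,k,w}=0\}$ and $\ol{p}=\inf\{p\mid\limsup_{k\to\infty}\sup_{w\in T}\Mc{E}_{p,k,w}=0\}$. First I would record that the infimum defining $\Mc{E}_{p,k,w}$ is attained by a $[0,1]$-valued function, since truncating any competitor to $[0,1]$ preserves the boundary constraints and does not increase $|f(x)-f(y)|$ on any edge; as $t\mapsto t^p$ is non-increasing on $[0,1]$ in $p$ and the optimizer has all edge-increments in $[0,1]$, the map $p\mapsto\Mc{E}_{p,k,w}$ is non-increasing. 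Hence each set $\{p\mid\cdots=0\}$ is an up-set, so $\ul{p}$ and $\ol{p}$ are genuine critical exponents, with $\ul{p}\le\ol{p}$ because $\liminf\le\limsup$. The key structural input is a submultiplicativity (chaining) estimate $\sup_w\Mc{E}_{p,k+l,w}\lesssim(\sup_w\Mc{E}_{p,k,w})(\sup_w\Mc{E}_{p,l,w})$, obtained by gluing an optimal function across the first $k$ scales with optimal functions on each sub-cell across the remaining $l$ scales. Since this makes $k\mapsto C\sup_w\Mc{E}_{p,k,w}$ submultiplicative, $\liminf_k\sup_w\Mc{E}_{p,k,w}=0$ forces $\sup_w\Mc{E}_{p,k_0,w}<1$ at some scale $k_0$ and hence $\lim_k\sup_w\Mc{E}_{p,k,w}=0$; thus $\ul{p}=\ol{p}=:p_c$, and it remains to prove $p_c=\ard(Y,\rho)$.

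For $p_c\le\ard(Y,\rho)$, fix $p>\alpha>\ard(Y,\rho)$ and a metric $\theta$ that is $\alpha$-Ahlfors regular with $\rho\qs\theta$. I would use $\theta$ to manufacture cheap competitors: for each $w$ take $f$ to be a Lipschitz cutoff in the $\theta$-metric, equal to $1$ on $\acute\pi^{-k}(w)$ and $0$ on the combinatorially far set $\Mc{C}_{w,k}$, which by \eqref{B3} sits at $\rho$-distance, hence $\theta$-distance, comparable to $\diam_\theta(K_w)$. By the basic-framework comparisons \eqref{B1}--\eqref{B3} each level-$([w]+k)$ cell has $\theta$-size geometrically smaller than $\diam_\theta(K_w)$, while the number of cells meeting the transition annulus is controlled by the $\alpha$-regularity of $\theta$. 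A direct count then bounds $\Mc{E}^p_n(f)$ by a constant times $t^{k(p-\alpha)}$ for some $t\in(0,1)$, uniformly in $w$; since $p>\alpha$ this tends to $0$, so $\sup_w\Mc{E}_{p,k,w}\to0$ and $p\ge p_c$. Letting $\alpha\downarrow\ard(Y,\rho)$ gives $p_c\le\ard(Y,\rho)$.

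The reverse inequality $\ard(Y,\rho)\le p_c$ is the substantive part. Fix $p>p_c$, so $\sup_w\Mc{E}_{p,k,w}\to0$, and pick a scale $k_0$ at which this supremum lies below a suitable threshold. From the optimal weights realizing $\Mc{E}_{p,k_0,w}$ I would build a new metric $\theta$ on $Y$ by chaining these weights down the tree: assign to each cell $K_w$ a radius obtained as the product of the local weights along the ancestry of $w$, and let $\theta$ be the induced chain (visual) metric. The two required verifications are that $\theta\qs\rho$ and that $\theta$ is $p$-Ahlfors regular with respect to the canonical measure carried by the tree, which together yield $\ard(Y,\rho)\le p$; letting $p\downarrow p_c$ then finishes the identification.

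The step I expect to be the main obstacle is this last construction and its Ahlfors-regularity verification. The upper bound $\Mc{H}_p(B_\theta(x,r))\lesssim r^p$ follows fairly directly from the definition of the chained weights, but the lower bound---that balls are not too small---requires controlling the modulus \emph{from below} simultaneously at every location and scale, which is precisely where the supremum over $w$ in the hypothesis, the exponential growth rate $N_*$ of the tree, and the duality between combinatorial $p$-capacity and $p$-modulus all enter, through a Frostman-type covering argument. Once $\theta$ is shown to be $p$-regular and quasisymmetric to $\rho$, combining the two directions gives $p_c\le\ard(Y,\rho)\le p_c$, so $\ul{p}=\ol{p}=\ard(Y,\rho)$ as claimed.
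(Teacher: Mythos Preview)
This statement is not proved in the paper at all: it is quoted as \cite[Theorem~4.6.9]{Kig20} (compact case) together with the non-compact extension \cite[Theorem~3.9]{Sas1}, and is used as a black box in the proof of Theorem~\ref{main}. There is therefore no ``paper's own proof'' to compare your proposal against.

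For what it is worth, your outline is broadly faithful to the strategy actually used in those references. The monotonicity of $p\mapsto\Mc{E}_{p,k,w}$, the submultiplicativity of $k\mapsto\sup_{w}\Mc{E}_{p,k,w}$ (yielding $\ul{p}=\ol{p}$), the test-function bound from an Ahlfors-regular metric quasisymmetric to $\rho$ for the direction $p_c\le\ard(Y,\rho)$, and the Carrasco-Piaggio--type construction of a new quasisymmetric Ahlfors-regular metric out of the optimal weights for the hard direction $\ard(Y,\rho)\le p_c$, are indeed the main ingredients in \cite{Kig20}. The one place your sketch is too casual is the submultiplicativity step: in the general basic-framework setting (no self-similarity), gluing an optimal function across the first $k$ levels with optimal functions on sub-cells across the next $l$ levels requires matching the combinatorial boundary data through the $M_*$-neighborhood structure \eqref{B3}--\eqref{B4}, and this takes real work in \cite{Kig20}. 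But again, none of this is carried out in the present paper; the result is simply cited.
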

 In the reminder of this section, we assume $(Y,\rho)=(X,R)$ and prove some inequalities of indices of the partition, which are necessary for the proof of Theorem \ref{main2}. \par Let $V_n=\{x_w\mid w\in T_n\}$ and $A_w=\cup\{K_v\mid v\in \Mc{C}_{w,0}\}.$ We will denote by $\Mc{R}_n$ (resp. $R_n,\ \mu_{x,y,n}$) the resistance between sets (resp. metric, weights) associated with $(\Mc{E}|_{V_n},\ell(V_n)).$ 
\begin{lem}\label{resaw}
	$\Mc{R}(K_w, A_w)\asymp \zeta^{[w]}$ for any $w\in T$ with $A_w\ne\emptyset.$
\end{lem}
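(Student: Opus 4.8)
# Proof Plan for Lemma \ref{resaw}

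The plan is to establish the estimate $\Mc{R}(K_w, A_w)\asymp \zeta^{[w]}$ by sandwiching the set $K_w$ and its ``annular complement'' $A_w$ between metric balls of radii comparable to $\zeta^{[w]}$, and then transfer the already-proven resistance estimate for balls, Lemma \ref{resbb}, to this combinatorially defined pair of sets. The essential geometric input is that the basic framework conditions \eqref{B1}, \eqref{B2}, and \eqref{B3} force both $K_w$ and the ``gap'' separating $K_w$ from $A_w$ to have scale exactly $\zeta^{[w]}$.

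First I would set $n=[w]$ and unwind the definitions. Since $A_w=\cup\{K_v\mid v\in\Mc{C}_{w,0}\}$ and $\Mc{C}_{w,0}=\{v\in T_n\mid l_n(w,v)>M_*\}$, every cell $K_v$ appearing in $A_w$ is separated from $w$ by graph distance exceeding $M_*$. By the definition of $\Delta_{M_*}$ in \eqref{B3}, this means that for $x\in K_w$ and $y\in A_w$ one has $\Delta_{M_*}(x,y)<n$, and hence by \eqref{B3} there is a constant so that $R(x,y)\gtrsim \zeta^{n}$; that is, $R(K_w,A_w)\gtrsim\zeta^{[w]}$. In the other direction, using \eqref{B1} I would bound $\diam_R(K_w)\lesssim\zeta^n$, so $K_w$ lies inside a ball $B(x_w,c_1\zeta^n)$ for a suitable constant $c_1$, and simultaneously $A_w$ lies outside a ball $B(x_w,c_0\zeta^n)$ for some $c_0>0$ coming from the lower separation bound just derived.

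For the upper bound $\Mc{R}(K_w,A_w)\lesssim\zeta^{[w]}$, I would use the monotonicity $\Mc{R}(A_1,A_2)\le \Mc{R}(A_1',A_2')$ whenever $A_1'\subset A_1$ and $A_2'\subset A_2$ (smaller sets mean harder to connect, larger resistance), together with the inclusions just established: since $K_w\supset \ol{B(x_w,\xi\zeta^{n})}$ by \eqref{B2}, and $A_w\subset B(x_w,c_0\zeta^n)^c$, it follows that
\[
\Mc{R}(K_w,A_w)\le \Mc{R}\bigl(\ol{B(x_w,\xi\zeta^{n})},\, B(x_w,c_0\zeta^n)^c\bigr)\asymp \zeta^n,
\]
where the last estimate is exactly Lemma \ref{resbb} applied with $r=c_0\zeta^n$ and $\eta=\xi/c_0$ (rescaling $\xi$ if necessary so that $\eta\in(0,1)$). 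For the lower bound $\Mc{R}(K_w,A_w)\gtrsim\zeta^{[w]}$, I would instead use $\Mc{R}(K_w,A_w)\ge \Mc{R}(x_w, A_w)\ge \Mc{R}(x_w, B(x_w,c_0\zeta^n)^c)$ together with the two-sided ball estimate $\Mc{R}(x,B(x,r)^c)\asymp r$ (from Proposition \ref{pre} and \cite[Theorem 7.12]{Kig12}) to obtain $\gtrsim\zeta^n$; here I must first ensure $B(x_w,c_0\zeta^n)\ne X$, which holds since $A_w\ne\emptyset$ guarantees there is a point outside that ball.

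The main obstacle I anticipate is the careful bookkeeping of constants and the correct application of \eqref{B3} to pass from the combinatorial separation $l_n(w,v)>M_*$ of cells to the genuine metric separation $R(K_w,A_w)\gtrsim\zeta^n$. The subtlety is that \eqref{B3} controls $\Delta_{M_*}$ rather than directly giving separation of cells at graph distance exceeding $M_*$; I would need to argue that if $x\in K_w$, $y\in A_w$ were metrically too close, then \eqref{B3} would force the cells containing them to be $M_*$-close in $l_n$, contradicting $v\in\Mc{C}_{w,0}$. Once this separation estimate is in hand, the remaining steps are routine consequences of the ball estimates Lemma \ref{resbb} and \cite[Theorem 7.12]{Kig12} combined with the geometric containments \eqref{B1} and \eqref{B2}.
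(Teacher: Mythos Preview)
Your upper bound argument has a small but fixable mix-up: you invoke the inclusion $A_w\subset B(x_w,c_0\zeta^n)^c$, but monotonicity in the direction you want requires the \emph{opposite} inclusion $B(x_w,c\zeta^n)^c\subset A_w$. Fortunately that inclusion does hold for a suitably large constant $c$, by \eqref{B1}: the union $\cup\{K_v\mid l_n(w,v)\le M_*\}$ has diameter $\lesssim\zeta^n$, so any point far enough from $x_w$ must lie in some cell with $l_n(w,v)>M_*$ and hence in $A_w$. With that correction your upper bound goes through exactly as in the paper.

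The lower bound, however, has a genuine gap. You write $\Mc{R}(K_w,A_w)\ge \Mc{R}(x_w,A_w)$, but monotonicity runs the other way: since $\{x_w\}\subset K_w$, the admissible class for $\Mc{R}(K_w,A_w)$ is \emph{smaller}, so the minimum energy is larger and the resistance is \emph{smaller}; that is, $\Mc{R}(K_w,A_w)\le \Mc{R}(x_w,A_w)$. Shrinking $K_w$ to a point therefore cannot yield a lower bound. The paper handles this by a covering argument: using that $(X,R)$ is doubling, cover $K_w$ by $N$ balls of radius $\iota\zeta^{[w]}/2$ (where $\iota$ comes from the metric separation $R(K_w,A_w)\ge\iota\zeta^{[w]}$ obtained via \eqref{B3}), take the optimal function for each ball versus the complement of the doubled ball from Lemma~\ref{resbb}, and form their maximum via Lemma~\ref{inf}~(1). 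This maximum is identically $1$ on $K_w$, vanishes on $A_w$, and has energy $\lesssim N\zeta^{-[w]}$, giving $\Mc{R}(K_w,A_w)\gtrsim\zeta^{[w]}$.
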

\begin{proof}
	Since \eqref{B1} and \eqref{B2} hold, $\Mc{R}(K_w,A_w)\lesssim \zeta^{[w]}$ follows from Lemma \ref{resbb}. On the other hand, there exists $\iota\in(0,1)$ satisfying $R(K_w, A_w)>\iota\zeta^{[w]}$ for any $w\in T$ because \eqref{B3} holds. Since $(X,R)$ is doubling each $K_w$ is covered by $N$ balls of radius $\iota\zeta^{[w]}/2,$ for some $N\ge0.$  Therefore $\Mc{R}(K_w,A_w)\gtrsim \zeta^{[w]}$ by Lemmas \ref{inf} (1) and \ref{resbb}, similarly to the latter part of the proof of Lemma \ref{resbb}.
\end{proof}
\begin{prop}\label{e2kw}
	\begin{enumerate}
		\item $\Mc{E}_{2,k,w}\lesssim \zeta^k$ for any $w\in T.$
		\item  If $(\Mc{E,F})$ is local then $\Mc{E}_{2,k,w}\gtrsim \zeta^k$ for any $w\in T.$
	\end{enumerate}
\end{prop}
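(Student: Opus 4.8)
The plan is to read both inequalities off the single resistance estimate $\Mc{R}(K_w,A_w)\asymp\zeta^{[w]}$ (Lemma \ref{resaw}) by comparing the purely combinatorial energy $\Mc{E}^2_n$ at level $n=[w]+k$ with the energy $\Mc{E}|_{V_n}$ of the trace of the form on $V_n=\{x_v\mid v\in T_n\}$, identifying $T_n$ with $V_n$ via $v\mapsto x_v$. I may assume $A_w\ne\emptyset$, since otherwise $\Mc{C}_{w,k}=\emptyset$ and the constant function $1$ forces $\Mc{E}_{2,k,w}=0$. Two elementary facts about the weights will be used: they are nonnegative off the diagonal, and $\mu_{x,y,n}\le R_n(x,y)^{-1}=R(x,y)^{-1}$ by the single-edge (parallel-law) bound; since distinct points of $V_n$ are $\gtrsim\zeta^n$-separated by \eqref{B2}, this gives $\mu_{x,y,n}\lesssim\zeta^{-n}$ for every pair.

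For (1) I first build a test function. Let $g\in\Mc{F}$ be the optimal function for $\Mc{R}(K_w,A_w)$, so $g\equiv1$ on $K_w$, $g\equiv0$ on $A_w$ and $\Mc{E}(g,g)=\Mc{R}(K_w,A_w)^{-1}\asymp\zeta^{-[w]}$; by \eqref{RF4} I may assume $0\le g\le1$. Put $f=g|_{V_n}$. Then $f\equiv1$ on $\acute\pi^{-k}(w)$ and $f\equiv0$ on $\Mc{C}_{w,k}$ (for $v\in\Mc{C}_{w,k}$ one has $x_v\in K_v\subset K_{\pi^k(v)}\subset A_w$), so $f$ is admissible and $\Mc{E}|_{V_n}(f,f)\le\Mc{E}(g,g)\asymp\zeta^{-[w]}$. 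The key estimate is $\Mc{E}^2_n(f)\lesssim\zeta^n\Mc{E}|_{V_n}(f,f)$ for every $f$: for an edge $(u,u')\in E_n$ the shared point of $K_u,K_{u'}$ together with \eqref{B1} gives $R(x_u,x_{u'})\lesssim\zeta^n$, so Proposition \ref{PropCo} applied to $B_{u,u'}:=B(x_u,\alpha R(x_u,x_{u'}))\cap V_n$ yields $R_{V_n,B_{u,u'}}(x_u,x_{u'})\le2R(x_u,x_{u'})\lesssim\zeta^n$; the variational characterization of this localized resistance then gives $(f(u)-f(u'))^2\lesssim\zeta^n\,\tfrac12\sum_{x,y\in B_{u,u'}}(f(x)-f(y))^2\mu_{x,y,n}$. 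Summing over $E_n$ and using that, by the doubling property and \eqref{B4}, each weighted pair lies in only boundedly many of the balls $B_{u,u'}$, I obtain $\Mc{E}^2_n(f)\lesssim\zeta^n\Mc{E}|_{V_n}(f,f)$. Hence $\Mc{E}_{2,k,w}\le\Mc{E}^2_n(f)\lesssim\zeta^n\cdot\zeta^{-[w]}=\zeta^k$.

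For (2) it suffices to bound $\Mc{E}^2_n(f)$ below for the minimizing admissible $f$, which I may take to satisfy $0\le f\le1$. Let $\tilde f\in\Mc{F}$ be its harmonic extension, so $\Mc{E}(\tilde f,\tilde f)=\Mc{E}|_{V_n}(f,f)$, and pick $z=x_v$ with $v\in\Mc{C}_{w,k}$ in a cell adjacent to the $M_*$-neighborhood of $w$; then (taking $x_w$, which equals $x_v$ for some $v\in\acute\pi^{-k}(w)$) one has $\tilde f(x_w)=1$, $\tilde f(z)=0$ and $R(x_w,z)\asymp\zeta^{[w]}$, so the resistance inequality $1=|\tilde f(x_w)-\tilde f(z)|^2\le R(x_w,z)\Mc{E}(\tilde f,\tilde f)$ gives $\Mc{E}|_{V_n}(f,f)\gtrsim\zeta^{-[w]}$, with no use of locality. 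Now split $\Mc{E}|_{V_n}(f,f)=S+L$ into the short-range part $S$ (pairs with $R(x,y)\le C_1\zeta^n$) and the long-range part $L$. Using $\mu_{x,y,n}\lesssim\zeta^{-n}$ together with a chaining estimate along bounded-length paths in $(T_n,E_n)$ (permissible since by \eqref{B3} short-range pairs are at bounded graph distance) and bounded overlap, I get $S\lesssim\zeta^{-n}\Mc{E}^2_n(f)$, i.e. $\Mc{E}^2_n(f)\gtrsim\zeta^nS$. Since $S=\Mc{E}|_{V_n}(f,f)-L\gtrsim\zeta^{-[w]}-L$, the claim $\Mc{E}_{2,k,w}\gtrsim\zeta^k$ reduces to showing $L\le\tfrac12\Mc{E}|_{V_n}(f,f)$, and this is where locality enters through Proposition \ref{Propwbd}, which for a local form makes the conductance between separated pieces negligible.

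The hard part is precisely this long-range control. At a fixed finite level the trace of even a local resistance form is not nearest-neighbor, so the weights $\mu_{x,y,n}$ between far-apart points are genuinely present, whereas Proposition \ref{Propwbd} only asserts $\sum_{(x,y)\in D_m}\mu_{x,y,m}\to0$ as $m\to\infty$ for fixed separated sets. Reconciling this limiting statement with the level $n=[w]+k$ at hand, uniformly in $w$ and $k$, is the crux: the intended remedy is to apply Proposition \ref{Propwbd} at the scale of $K_w$ (bounded, with $R(K_w,A_w)>0$), so that the relevant part of $L$ is dominated by the far conductances out of $K_w$, and to absorb the intermediate scales into the chaining already used for $S$. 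It is exactly the failure of this step that permits the reversed inequality in the non-local Theorem \ref{cex}, which is why locality cannot be dropped from (2).
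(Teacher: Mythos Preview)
Your argument for (1) is correct and is the potential-theoretic dual of the paper's flow argument: the paper pushes the optimal unit flow for the graph resistance $\Omega_{[w]+k}(\acute\pi^{-k}(w),\Mc{C}_{w,k})$ forward to a unit flow on $(V_n,\mu_n)$ via the localized optimal flows of Proposition~\ref{PropCo}, thereby bounding $\Mc{R}_{[w]+k}(K_w,A_w)$ from above; you instead pull the optimal potential for $\Mc{R}(K_w,A_w)$ back to an admissible function on $T_n$ and use the same localized resistance bound to compare $\Mc{E}^2_n$ with $\Mc{E}|_{V_n}$. Both routes rest on the same ingredients (Lemma~\ref{resaw}, Proposition~\ref{PropCo}, and bounded overlap from doubling) and yield (1).

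For (2), however, there is a genuine gap. Your entire argument is run at the fixed level $n=[w]+k$: you take the graph minimizer $f$, view it on $V_n$, and try to show that the long-range part $L$ of $\Mc{E}|_{V_n}(f,f)$ is at most half the whole. But Proposition~\ref{Propwbd} gives no information at level $n=[w]+k$; it is a \emph{limiting} statement asserting that for fixed separated sets the far conductances $\sum_{D_m}\mu_{x,y,m}$ vanish as $m\to\infty$, with no quantitative control on any finite $m$, let alone on $m=[w]+k$ uniformly in $w,k$. Since the trace of a local form at a fixed finite level is genuinely non-local, your framework contains no mechanism to bound $L$, and your ``intended remedy'' is a restatement of the difficulty rather than a resolution of it.

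The paper's proof of (2) does not stay at level $[w]+k$. It first uses Proposition~\ref{Propwbd} and Lemma~\ref{resab} to select a \emph{deeper} level $n\ge[w]+k$ at which the total long-range conductance out of the cells $K_v$, $v\in T_{[w]+k}\setminus\Mc{C}_{w,k}$, is below $\tfrac13\Mc{R}(K_w,A_w)^{-1}$ and $\Mc{R}_n(K_w,A_w)\le2\Mc{R}(K_w,A_w)$. It then builds an explicit test function $\tau$ on $V_n$ (not the harmonic extension of $f$) by a short-step path infimum combining the graph minimizer $f$ at level $[w]+k$ with the local optimal potentials $\tau_v$ for $\Mc{R}_n(K_v,A_v)$; the very choice of $n$ is what absorbs the long-range contribution into the allotted $\tfrac13\Mc{R}(K_w,A_w)^{-1}$. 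Although $n$ depends on $w$ and $k$, the final inequality involves only the universal constants $J,L_*,M_*,\xi$ and that of Lemma~\ref{resaw}, so the estimate is uniform. This passage to a deeper level, together with the path-type construction of $\tau$, is the idea missing from your approach.
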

For proving Proposition \ref{e2kw} (1), we use the argument of flow.

\begin{defi}[Unit flow]
	Let $(\Mc{E,F})$ be a resistance form on a finite set $V.$ For $A,B\subset V$ with $A\cap B=\emptyset,$ $f:V\times V\to R$ is called a \emph{unit flow} from $A$ to $B$ if it satisfies
	\begin{itemize}
		\item $f(x,y)=-f(y,x)$ for any $x,y\in V.$
		\item $\sum_{y\in V}f(x,y)=0$ for any $x\not\in A\cup B,$
		\item $\sum_{x\in A}\sum_{y \in V}f(x,y)=1$ and $\sum_{x\in B} \sum_{y\in V}f(x,y)=-1.$
	\end{itemize}
	Let $\{\mu_{x,y}\}, \Mc{R}$ be the associated resistance weight and resistance between subsets, then it is known that
	\begin{equation}\label{fl}
		\Mc{R}(A,B)=\min\{\frac{1}{2}\sum_{x,y\in V}\frac{f(x,y)^2}{\mu_{x,y}}\mid f\text{ is a unit flow from }A\text{ to }B \}.
	\end{equation}
	
	We say $f$ is the \emph{optimal flow} for $\Mc{R}(A,B)$, or optimal flow from $A$ to $B$ if $f$ is the optimal function for the right hand side of \eqref{fl}.
\end{defi}
\begin{proof}[Proof of Proposition \ref{e2kw}](1) Fix any $w\in T$ and $k\ge0.$ Let $\tau$ be the optimal flow for $\Omega_{[w]+k}(\acute\pi^{-k}(w),\Mc{C}_{w,k}),$ $\alpha>1$ be the constant appeared in Proposition \ref{PropCo} and  $f_{u,v}$ be the optimal flow for $R_{V_{[w]+k},B(x,\alpha R(x_u,x_v))}(x_u,x_v)$ for $u,v\in T_{[w]+k}.$
We define $f:V_{[w]+k}\times V_{[w]+k}\to \Mb{R}$ by
\[f(p,q)=\frac{1}{2}\sum_{(u,v)\in E_{[w]+k}}\tau(u,v)f_{u,v}(p,q),\]
then $f$ is a  unit flow from $K_w\cap V_{[w]+k}$ to $A_w\cap V_{[w]+k}$ on $(\Mc{E}|_{V_n},\ell(V_n))$. Note that $f_{u,v}(p,q)=0$ if 
\[R(x_u,p)\vee R(x_v,q)\ge \alpha R(x_u,x_v)\ge \alpha\xi \zeta^{[w]+k}.\] 
Since $R$ is doubling, there exists $N>0$ such that 
\[\sup_{x\in X,\ r>0} \{\#(Y) \mid Y\subset B(x,\alpha r), R(y,z)\ge r\text{ for any }y,z\in Y\text{ with }y\ne z \} \le N\]
Therefore
\begin{align*}
	\Mc{R}_{[w]+k}(K_w,A_w)\le&\frac{1}{2}\sum_{p,q\in V_{[w]+k}}\frac{f(p,q)^2}{\mu_{p,q}}\\
	\le& \frac{N}{8}\sum_{(u,v)\in E_{[w]+k}} \tau(u,v)^2\sum_{p,q\in V_{[w]+k}}\frac{f_{u,v}(p,q)^2}{\mu_{p,q}}\\
	\le& \frac{N}{2} \Bigl(\sup_{(u,v)\in E_{[w]+k}} 2R(x_u,x_v)\Bigr) \sum_{(u,v)\in E_{[w]+k}} \tau(u,v)^2\\
	\le& 2N\xi\zeta^{[w]+k}(\Mc{E}_{2,k,w})^{-1}.
\end{align*}
Since $\zeta^{[w]}\lesssim\Mc{R}(K_w,A_w)\le\Mc{R}_{[k]+w}(K_w,A_w)$ for any $w\in T,$ the claim holds.
	\par (2) We first note that there exists $\beta>0$ such that $\Delta_{M_*} (x,y)\ge n$ whenever $R(x,y)\le \beta \zeta^n$ by \eqref{B3}. Fix any $w\in T$ and $k\ge0.$ Since $T_{[w]+k}\setminus \Mc{C}_{w,k}$ is a finite set, there exists $n\ge[w]+k$ such that
	\[\sum_{v\in T_{[w]+k}\setminus \Mc{C}_{w,k}}\sum_{\substack{x\in K_v,\\ y\not\in B(x,\beta\zeta^{[w]+k})}}\mu_{x,y,n}<\frac{1}{3}\Mc{R}(K_w,A_w)^{-1}\]
	and $\Mc{R}_n(K_w,A_w)\le 2\Mc{R}(K_w,A_w)$, by Lemma \ref{resab} and Proposition \ref{Propwbd}.\\
	Let $f$ be the optimal function for $\Omega_{[w]+k}(\acute\pi^{-k}(w),\Mc{C}_{w,k})$ and $\tau_v$ be the optimal functions for $R_{n}(K_v,A_v)$ and $v\in T_{[w]+k}.$ We also let 
	\[\ol{f}(v)=2\max\bigl\{|f(v)-f(u)|\bigm\vert l_{[w]+k}(u,v)\le 2M_*\bigr\}.\]
	Our next goal is to construct a suitable function $\tau$ on $V_n$ with $\tau|_{K_w\cap V_n}\equiv 1$ and $\tau|_{A_w\cap V_n}\equiv 0$ with the above functions. Set
	\begin{gather*}
		\begin{multlined}
			P(x)=\bigl\{ \{x_i\}_{i=0}^m\subset V_n \bigm\vert m\in\{0\}\cup\Mb{N},\ x_0\in A_w,\ x_m=x,\bigr. \\
			\bigl.\quad R(x_i,x_{i-1})\le \beta\zeta^{[w]+k}\text{ for any }i\bigr\}
		\end{multlined}
		\\
		\shortintertext{ for $x\in V_n.$ We define $\tau:V_n\to\Mb{R}$ as}
		\tau(x)=1\wedge \inf \biggl\{\sum_{i=1}^m\sup_{v\in T_{[w]+k}} \ol{f}(v)|\tau_v(x_i)-\tau_v(x_{i-1})|
		\biggm\vert  \{x_i\}_{i=0}^m\in P(x)\biggr\}
	\end{gather*}
where $\sum_{i=1}^0\infty:=0.$ Then it clearly holds $0\le\tau\le1$ and $\tau|_{A_w\cap V_n}\equiv0.$ 
\begin{cla}
	$\tau|_{K_w\cap V_n}\equiv1$.
\end{cla}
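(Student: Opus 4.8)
The plan is to prove the claim by bounding below the cost of every admissible path. Since $\tau(x)=1\wedge\inf\{\cdots\}$, it suffices to show that for $x\in K_w\cap V_n$ every path $\{x_i\}_{i=0}^m\in P(x)$ (so that $x_0\in A_w$, $x_m=x$, and $R(x_i,x_{i-1})\le\beta\zeta^{[w]+k}$ for all $i$) satisfies $\sum_{i=1}^m\sup_{v\in T_{[w]+k}}\ol{f}(v)|\tau_v(x_i)-\tau_v(x_{i-1})|\ge1$.

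First I would introduce a comparison function on $V_n$ by lifting $f$ through the partition. For $y\in V_n$ let $v(y)\in T_{[w]+k}$ be the cell at level $[w]+k$ containing $y$ (well defined since the chosen centers lie in the interiors of their cells by Lemma \ref{partsp}), and set $\psi(y)=f(v(y))$. Because $f|_{\acute\pi^{-k}(w)}\equiv1$ and $f|_{\Mc{C}_{w,k}}\equiv0$, the boundary data transfer correctly: if $y\in K_w$ then $v(y)\in\acute\pi^{-k}(w)$, so $\psi(y)=1$, while if $y\in A_w$ then $\pi^k(v(y))\in\Mc{C}_{w,0}$, hence $v(y)\in\Mc{C}_{w,k}$ and $\psi(y)=0$. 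In particular $\psi(x_m)-\psi(x_0)=1$, so by telescoping $1\le\sum_{i=1}^m|\psi(x_i)-\psi(x_{i-1})|$.

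The second ingredient is the coarse closeness of consecutive cells. By the choice of $\beta$ together with \eqref{B3}, the bound $R(x_{i-1},x_i)\le\beta\zeta^{[w]+k}$ forces $\Delta_{M_*}(x_{i-1},x_i)\ge[w]+k$, and hence $l_{[w]+k}(v_{i-1},v_i)\le M_*$ for $v_j:=v(x_j)$. Consequently $|f(v_i)-f(v_{i-1})|\le\tfrac12\ol{f}(v_{i-1})$, since $l_{[w]+k}(v_{i-1},v_i)\le M_*\le 2M_*$ enters the definition of $\ol{f}$. Combining this with the previous paragraph reduces the claim to the per-edge inequality $|\psi(x_i)-\psi(x_{i-1})|\le\sup_v\ol{f}(v)|\tau_v(x_i)-\tau_v(x_{i-1})|$, which upon summation gives total cost $\ge\sum_i|\psi(x_i)-\psi(x_{i-1})|\ge1$ and hence $\tau(x)=1$.

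The hard part will be this per-edge comparison, i.e. producing for each step a single cell $v$ whose equilibrium potential $\tau_v$ registers a definite fraction of the coarse increment $|f(v_i)-f(v_{i-1})|$. The mechanism I would exploit is that $\tau_v$ decreases from $1$ on $K_v\cap V_n$ to $0$ on $A_v\cap V_n$ across the collar of width $M_*$ around $v$, so the factor-$2$ slack built into $\ol{f}$ can absorb an incomplete crossing: taking $v=v_{i-1}$, if $\tau_{v_{i-1}}(x_i)\le\tfrac12$ then $|\tau_{v_{i-1}}(x_i)-\tau_{v_{i-1}}(x_{i-1})|\ge\tfrac12$, so the term already dominates $|f(v_i)-f(v_{i-1})|$ because $\ol{f}(v_{i-1})\ge2|f(v_i)-f(v_{i-1})|$, and symmetrically with $v=v_i$. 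The delicate remaining case, in which $x_{i-1}$ and $x_i$ sit on the high side of both potentials, is where the argument requires care; there I would either pass to an intermediate cell along the coarse geodesic from $v_{i-1}$ to $v_i$, or replace the crude per-edge estimate by an aggregate one that sums the variation of a single $\tau_v$ along the portion of the path lying in the transition collar of $w$, using $R(x_{i-1},x_i)\le\beta\zeta^{[w]+k}$ and the doubling of $(X,R)$ to keep the number of relevant cells bounded by $L_*$-type combinatorics.
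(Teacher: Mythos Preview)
Your overall plan—show every path in $P(x)$ has cost at least $1$, using that $f(v(x_0))=0$, $f(v(x_m))=1$, and consecutive cells $v_{i-1},v_i$ are coarse-close—matches the paper. The gap is the per-edge inequality
\[
|f(v_i)-f(v_{i-1})|\ \le\ \sup_v \ol f(v)\,\bigl|\tau_v(x_i)-\tau_v(x_{i-1})\bigr|,
\]
which is simply false in general, and you do not close the case you yourself flag as delicate. When $x_{i-1}\in K_{v_{i-1}}$ and $x_i\in K_{v_i}$ with $l_{[w]+k}(v_{i-1},v_i)\le M_*$, neither point lies in $A_{v_{i-1}}$ nor in $A_{v_i}$, so both $\tau_{v_{i-1}}(x_i)$ and $\tau_{v_i}(x_{i-1})$ can be arbitrarily close to $1$; no single potential $\tau_v$ need vary at all on that step. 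Your ``intermediate cell on a coarse geodesic'' fix does not help, since the problem is not the size of $\ol f$ but the lack of potential variation on a single edge.

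The paper's argument is precisely the aggregate one you allude to in your last sentence, and it is essential. One inductively sets $i_0=0$, picks $v_j\in T_{[w]+k}$ with $x_{i_j}\in K_{v_j}$, and lets $i_{j+1}$ be the \emph{first} index at which the path enters $A_{v_j}$. Over a full such crossing, $\sum_{i=i_j+1}^{i_{j+1}}|\tau_{v_j}(x_i)-\tau_{v_j}(x_{i-1})|\ge|\tau_{v_j}(x_{i_{j+1}})-\tau_{v_j}(x_{i_j})|=1$, so the total path cost dominates $\sum_j\ol f(v_j)$. Because $x_{i_j-1}\notin A_{v_{j-1}}$ lies in a cell within $M_*$ of $v_{j-1}$ while $x_{i_j}\in K_{v_j}$ is $\beta\zeta^{[w]+k}$-close to it, one gets $l_{[w]+k}(v_{j-1},v_j)\le 2M_*$ (this is why $\ol f$ is defined with radius $2M_*$, not $M_*$); telescoping $f$ along the chain $v_0,v_1,\ldots$ with $f(v_0)=0$ and the terminal cell near $v_*\in\acute\pi^{-k}(w)$ then yields $\sum_j\ol f(v_j)\ge 1$. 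So the crossing-segment viewpoint is what makes the argument work; your per-edge route is a dead end that must be abandoned in favour of the aggregation you only sketched.
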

\begin{proof}
	Fix any  $x\in K_w\cap V_n$ and $\{x_i\}_{i=0}^m\in P(x).$ We inductively choose
$i_0=0,$ $v_j\in T_{[w]+k}$ such that $x_{i_j}\in K_{v_j},$ and $i_{j+1}=\min\{i\mid x_j\in A_{v_j} \}.$ Note that since $\Delta_{M_*}(x_{i_j},x_{(i_j)-1})\ge [w]+k,$ $l_{[w]+k}(v_j,v_{j-1})\le 2M_*.$ Let $\iota\ge0$ and $v_*\in T_{[w]+k}$ be such that $x\not\in A_{v_\iota},$ $v_*\in T_{[w]+k}$ and $l_{[w]+k}(v_*,v_\iota)\le M_*,$ then
\begin{align*}
	&\sum_{i=1}^m\sup_{v\in T_{[w]+k}} \ol{f}(v)|\tau_v(x_i)-\tau(x_{i-1})|\\
	\ge& \sum_{j=0}^{\iota-1} \ol{f}(v_j)\sum_{i=i_j}^{i_{(j-1)}-1}|\tau_{v_j}(x_{i+1})-\tau(x_i)|\\ \ge&\sum_{j=0}^{\iota-1} \ol{f}(v_j)
	\ge|f(v_*)-f(v_\iota)|+\sum_{j=0}^{\iota-1}|f(v_{i+1})-f(v_i)|\ge1.
\end{align*} 
This shows the claim.
\end{proof}
Next we evaluate $\Mc{E}|_{V_n}(\tau,\tau):$ 
\begin{align*}
	\frac{1}{2}(\Mc{R}(K_w,A_w))^{-1}\le& \Mc{E}|_{V_n}(\tau,\tau)\\ \le& \frac{1}{2}\sum_{\substack{x,y\in V_n\\: (x,y)\not\in A_w\times A_w}}(\tau(x)-\tau(y))^2\mu_{x,y,n}\\
	\le&\frac{1}{3}(\Mc{R}(K_w,A_w))^{-1}+\frac{1}{2}\sum_{\substack{x,y\in V_n\\:R(x,y)\le \beta\zeta^{[w]+k}}}(\tau(x)-\tau(y))^2\mu_{x,y,n}.
\end{align*}
If $R(x,y)\le \beta \zeta^{[w]+k},$ then $|\tau(x)-\tau(y)|\le\sup_{v\in T_{[w]+k}}\ol{f}(v)|\tau_v(x)-\tau_v(y)|.$ Moreover, since $ \tau|_{A_v}\equiv0,$ $R(x_v,x_u)\ge \xi\zeta^{[w]+k}$ for $u,v\in T_{[w]+k}$ with $u\ne v$ and $R$ is doubling, there exists $J>0$ such that 
\[\sup_{\substack{w\in T, k\ge0,\\x,y\in V_n:R(x,y)\le \beta\zeta^{[w]+k}}}\#\{v\in T_{[w]+k}\mid \tau_v(x)\vee \tau_v(y)\ge0 \}\le J<\infty.\]
Therefore 
\begin{align*}
\frac{1}{6}(\Mc{R}(K_w,A_w))^{-1}\le& \frac{J}{2}\sum_{v\in T_{[w]+k}}\ol{f}(v)^2\sum_{x,y\in V_n}(\tau_v(x)-\tau_v(y))^2\mu_{x,y,n}\\
\le & \frac{J}{\xi}\zeta^{-[w]-k}\sum_{v\in T_{[w]+k}}\ol{f}(v)^2.
\end{align*}
Since $\ol{f}(v)\le \sum_{i=1}^{2M_*}|f(v_i)-f(v_{i-1})|$ for some $\{v_i\}_{i=0}^{2M_*}$ satisfying $v_0=v$ and $(v_i,v_{i-1})\in E_{[w]+k},$ 
\[\sum_{v\in T_{[w]+k}}\ol{f}(v)^2\le 4L_*^{2M_*-1}\sum_{u,v\in E_{[w]+k}}(f(u)-f(v))^2=8L_*^{2M_*-1}\Mc{E}_{2,k,w}.\]
Thus we have
\[ \Mc{E}_{2,k,w}\ge \frac{\xi}{48JL_*^{2M_*-1}}\zeta^{[w]+k}(\Mc{R}(K_w,A_w))^{-1}.\]
This with Lemma \ref{resaw} shows the proposition.
\end{proof}
\begin{prop}\label{nstar}
	\begin{align*}
		(1)\quad &\sup_{\substack {x\in X, r\in (0,\diam(X,R))}}\frac{V_\mu(x,r)}{V_\mu(x,\zeta^kr)}\gtrsim N_*^k\text{ for any }\mu\in \Mc{M}_{(X,R)}\text{ and }k\ge0.\\
		(2)\quad& \text{ For any }\epsilon>0, \text{ there exists } \mu\in\Mc{M}_{(X,R)}\text{ with }\\ &\sup_{\substack {x\in X, r\in (0,\diam(X,R))}}\frac{V_\mu(x,r)}{V_\mu(x,\zeta^kr)}\lesssim (N_*+\epsilon)^k\text{ for any }k\ge0.
	\end{align*}
\end{prop}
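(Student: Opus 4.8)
For part (1), the plan is to use, for each fixed $w\in T$ and $k\ge0$, the $m:=\#(\acute\pi^{-k}(w))$ cells $\{K_v\}_{v\in\acute\pi^{-k}(w)}$, whose interiors are pairwise disjoint and each of which contains an open ball $B(x_v,\xi\zeta^{[w]+k})$ by \eqref{B2}. Since these balls are disjoint and contained in $K_w\subset B(x_w,C_0\zeta^{[w]})$ (using \eqref{B1}), one gets $\sum_{v}V_\mu(x_v,\xi\zeta^{[w]+k})\le\mu(K_w)$, so by averaging some $v^*$ satisfies $V_\mu(x_{v^*},\xi\zeta^{[w]+k})\le\mu(K_w)/m$. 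Taking $x=x_{v^*}$ and $r\asymp\zeta^{[w]}$ large enough that $K_w\subset B(x_{v^*},r)$, the volume doubling property gives $V_\mu(x_{v^*},r)\ge\mu(K_w)$ while $V_\mu(x_{v^*},\zeta^{k}r)\lesssim V_\mu(x_{v^*},\xi\zeta^{[w]+k})$ (the ratio of radii is a fixed constant). Using also $\mu(B(x_w,C_0\zeta^{[w]}))\asymp\mu(K_w)$ (by $\mathrm{(VD)}$ and the inner ball of \eqref{B2}), the ratio in (1) at $(x_{v^*},r)$ is $\gtrsim m$. Taking the supremum over $w$ and invoking $\sup_{w\in T}\#(\acute\pi^{-k}(w))\ge N_*^k$ from \eqref{ninf} finishes (1). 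The only technical point is to keep $r<\diam(X,R)$, which is harmless since $r\asymp\zeta^{[w]}\lesssim\diam_\rho(K_w)\le\diam(X,R)$.

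For part (2), I would first choose, via \eqref{ninf}, an integer $k_0$ with $\sup_{w\in T}\#(\acute\pi^{-k_0}(w))\le(N_*+\epsilon)^{k_0}$, and then build a measure adapted to the scale $k_0$. The idea is to assign to each cell a weight $g(w)$ that decays by the factor $1/\#(\acute\pi^{-k_0}(\cdot))\ge(N_*+\epsilon)^{-k_0}$ across each block of $k_0$ generations, so that along any chain of descendants $g(w)/g(v)\lesssim(N_*+\epsilon)^{[v]-[w]}$, and then realize $g$ as $\mu(K_w)\asymp g(w)$ for a Borel measure $\mu$, e.g.\ as a weak-$*$ limit of the normalized atomic measures carried by $V_n=\{x_w\mid w\in T_n\}$ from Lemma \ref{partsp}. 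Granting $\mu\in\Mc{M}_{(X,R)}$, the growth bound follows as in part (1) but in reverse: for $x$ and $r\asymp\zeta^{n}$ one has $V_\mu(x,r)\asymp g(w)$ and $V_\mu(x,\zeta^{k}r)\asymp g(v)$ for cells $w\in T_{n}$, $v\in T_{n+k}$ with $x\in K_v\subset K_{\pi^{k}v}$, and the descendant decay of $g$ gives the desired bound $\lesssim(N_*+\epsilon)^{k}$.

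The hard part is verifying $\mu\in\Mc{M}_{(X,R)}$, that is, producing a weight $g$ with the controlled block-decay above that is \emph{also} gentle, meaning $g(w)\asymp g(v)$ whenever $(w,v)\in E_{[w]}$; gentleness, together with \eqref{B1}--\eqref{B4}, is exactly what yields $V_\mu(x,2r)\lesssim V_\mu(x,r)$ and legitimizes the comparison $V_\mu(x,\cdot)\asymp g(\text{cell})$ used above. A naive equal-splitting is \emph{not} gentle in general: across the interface between regions of differing local branching the weights of adjacent cells can differ by a factor growing with the level, so the construction must smooth $g$ across such interfaces while keeping the decay rate $N_*+\epsilon$. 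I would carry this out by a gentle regularization of the equal-split weight (replacing $g(w)$ by a controlled maximum over cells within bounded $l_{[w]}$-distance and re-normalizing per block), exploiting that ancestors of neighbors are neighbors and that $\#(\acute\pi^{-k_0}(\cdot))$ is bounded above and below by constants depending only on $k_0$ (from doubling and uniform perfectness), so the smoothing alters $g$ by only a bounded factor per block and never exceeds the densest local rate $N_*$. Equivalently, one may simply invoke the weighted-partition correspondence of \cite{Kig20} between gentle weight functions and $\mathrm{(VD)}$ measures. This reconciliation of the combinatorial decay rate with metric gentleness is the main obstacle.
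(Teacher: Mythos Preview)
Your argument for (1) is correct and is essentially the paper's: for any $w\in T$ the disjoint inner balls $B(x_v,\xi\zeta^{[w]+k})$, $v\in\acute\pi^{-k}(w)$, sit inside $K_w$, so some $v^*$ has $V_\mu(x_{v^*},\xi\zeta^{[w]+k})\le\mu(K_w)/\#(\acute\pi^{-k}(w))$; comparing with $V_\mu(x_{v^*},2\diam K_w)\ge\mu(K_w)$ and adjusting radii by $\mathrm{(VD)}$ gives the bound, and one finishes with $\sup_w\#(\acute\pi^{-k}(w))\ge N_*^k$.

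For (2) you have located the right difficulty but not resolved it. The equal-split weight $g$ is \emph{not} gentle, and your proposed fix --- replace $g(w)$ by a maximum over a bounded $l_{[w]}$-neighborhood and re-normalize per block --- does not work as stated. If $(w,u)\in E_n$ then $g(u)/g(w)$ is a product of $\tilde b(w,u)$ factors of the form $\#(\acute\pi^{-k_0}(\cdot))/\#(\acute\pi^{-k_0}(\cdot))$ along the two ancestries; each factor is bounded but the product can be exponentially large in the level, so the discrepancy you propose to smooth is itself unbounded. A local maximum over fixed graph radius does not cure this (the maximizer for $w$ may lie outside the radius of $u$), and ``re-normalizing per block'' endangers either the additivity $\sum_{v\in\acute\pi^{-k_0}(w)}g(v)=g(w)$ or the decay bound. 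The appeal to the weighted-partition correspondence of \cite{Kig20} does not help either, since that correspondence still requires you to \emph{produce} a gentle weight with the prescribed decay. As written this is a genuine gap.

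The paper's construction avoids regularization altogether. After choosing $k_0$ with $\sup_w\#(\acute\pi^{-k_0}(w))\le(N_*+\epsilon)^{k_0}$, it also arranges (via \cite[Proposition~4.3.5]{Kig20}) that every $w$ has an ``interior'' $k_0$-child $v(w)\in\acute\pi^{-k_0}(w)$ with $K_{v(w)}\subset\nai(K_w)$. Each child of $w$ is given the \emph{fixed} share $(N_*+\epsilon)^{-k_0}$ except $v(w)$, which absorbs the remainder $1-(\#-1)(N_*+\epsilon)^{-k_0}\in\bigl[(N_*+\epsilon)^{-k_0},\,1-(N_*+\epsilon)^{-k_0}\bigr]$. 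Writing $\psi(w)$ for the product of these shares along the ancestry, gentleness is automatic: if $(w,u)\in E_n$ then for every $i$ below the meeting level $K_{\pi^{ik_0}(w)}$ meets $K_{\pi^{ik_0}(u)}$ and hence is not contained in $\nai(K_{\pi^{(i+1)k_0}(w)})$, so $\pi^{ik_0}(w)$ is \emph{not} the interior child and carries the fixed share $(N_*+\epsilon)^{-k_0}$; likewise on the $u$ side. All these factors cancel in $\psi(u)/\psi(w)$, leaving only the single top-step ratio, bounded by $(N_*+\epsilon)^{k_0}-1$. A weak limit of $\sum_{w\in T_{nk_0}}\psi(w)\delta_{x_w}$ then yields $\mu$ with $\mu(K_w)\asymp\psi(w)$, and both $\mu\in\Mc{M}_{(X,R)}$ and the growth bound follow directly from this gentleness together with the built-in lower bound $\psi(v)\ge(N_*+\epsilon)^{-k_0}\psi(\pi^{k_0}(v))$. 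The ``dump the excess into an interior cell'' device is the idea your sketch is missing.
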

\begin{proof}
(1) We have 
\[\mu(K_w)\ge\sum_{v\in\acute\pi^{-k}(w)}V_\mu(x_v,\xi\zeta^k)\ge\min_{v\in\acute\pi^{-k}(w)}V_\mu (x_v,\xi\zeta^k)\#(\acute\pi^{-k(w)}),\]
which leads to
\[\sup_{\substack {w\in T,\\ v\in \acute\pi^{-k}(w)}}\frac{V_\mu(x_v,2\diam(K_w))}{V_\mu(x_v,\xi\zeta^{[v]})}\ge\sup_{w\in T}\#(\acute\pi^{-k(w)})\ge N_*^k\]
for any $k\ge0.$ Since $\mu$ is (VD)$_R,\ \diam(K_w)\lesssim\zeta^{[w]}$ for any $w\in T$ and \eqref{ninf}, the claim follows.\par
(2) Fix any $\epsilon>0.$ By \eqref{ninf} and \cite[Proposition 4.3.5]{Kig20}, we can choose $k\ge1$ such that $\sup_{w\in T}\#(\acute\pi^{-k}(w))\le (N_*+\epsilon)^k$ and for any $w\in T,$ there exists $v(w)\in \acute\pi^{-k}(w)$ with $K_{v(w)}\subset \nai(K_w).$\\
Let $\tilde{T}=\cup_{n\in \Mb{Z}}T_{kn},$ then it is easily seen that $K|_{\tilde T}$ is a partition of $(X,R)$ parametrized by $(\tilde T, \pi^k, \phi),$ satisfying the basic framework. We define $\varphi,\psi:\tilde T\to\Mb{R}$ as
\[
\varphi(w)=\begin{cases}
	\Bigl(1- \frac{\#(\acute\pi^{-k}(\pi^k(w)))-1}{(N_*+\epsilon)^k}\Bigr) & (\text{if }w=v(\pi^k(w)))\\
	\frac{1}{(N_*+\epsilon)^k} & \text{(otherwise)}
\end{cases}\]
and $\psi(w)=\bigl(\prod_{i=0}^{\tilde b(w,\phi)}\varphi (\pi^{ik}(w))\bigr)/\bigl(\prod_{i=0}^{\tilde b(\phi,w)}\varphi (\pi^{ik}(\phi))\bigr),$ where
\[\tilde b(w,u)=b_{\pi^k}(w,u):=\min\{i\ge0\mid \pi^{ik}(w)\text{ for some }j\ge0\}
\] for $w,u\in\tilde T.$ Note that 
\[\frac{1}{(N_*+\epsilon)^k}\le \varphi(w)\le  \Bigl(1- \frac{1}{(N_*+\epsilon)^k}\Bigr)\quad \text{and }\sum_{v\in\acute\pi^{-k}(w)}\varphi(w)=1.\]
We next claim that 
\begin{equation}
	((N_*+\epsilon)^k-1)\psi(w)\ge\max\{\psi(u)\mid (w,u)\in E_{[w]}\}\text{ for any }w\in\tilde{T}.\label{psi} 
\end{equation}
Recall that $\tilde b(w,u)=\tilde b(u,w)$ by $[w]=[u].$ If $i<\tilde b(w,u)-1$ then we have $\pi^{ik}(w)\ne v(\pi^{(i+1)k}(w))$ because
\[\phi\ne K_{\pi^{ik}(w)} \cap K_{\pi^{ik}(u)}\subset K_{\pi^{(i+1)k}(w)} \cap K_{\pi^{(i+1)k}(u)}\not\subset \nai( K_{\pi^{(i+1)k}(w)}). \]
This shows 
\[ \frac{\psi(u)}{\psi(w)}=\frac{\prod_{i=0}^{\tilde b(w,u)}\varphi (\pi^{ik}(w))}{\prod_{i=0}^{\tilde b(u,w)}\varphi (\pi^{ik}(u))}=\frac{\pi^{(\tilde b(w,u)-1)k}(w)}{\pi^{(\tilde b(u,w)-1)k}(u)}\le ((N_*+\epsilon)^k-1).\] 
We now prove the proposition for the case of $\diam(X,R)<\infty.$\\
Let $\mu_n=\sum_{w\in T_{nk}}\psi (w)\delta_{x_w},$ where $\delta_{x_w}$ is the Dirac measure on $x_w.$ Then by Prokhorov's theorem, there exists a Borel probability measure $\mu_*$ such that $\mu_{n_m}\to \mu_*$ weakly as $m\to\infty,$ for some subsequence $\{\mu_{n_m}\}_{m\ge0}.$ We have
\[\mu_*(K_w)\ge \limsup_{m\to\infty}\mu_{n_m}(K_w)=\psi(w)\]
for any $w\in T.$ Moreover, since $K_w\subset U_w:=\nai(\cup_{u:(w,u)\in E_{[w]}}K_u),$
\[\mu_*(K_w)\le \mu_*(U_w)\le\liminf_{m\to\infty}\mu_{n_m}(U_w)\le (L_*((N_*+\epsilon)^k-1)+1)\psi(w).\]
By \eqref{B1} and \eqref{B3}, there exists $m\ge0$ such that for any $x\!\in\! X,$ $r \!\in\!(0,\diam(X,R))$ and $n\ge0,$ we can choose some $w\in \tilde T$ with
\[K_w\subset B(x,\zeta^{nk}r)\quad\text{and}\quad B(x,r)\subset X\setminus A_{\pi^{(m+n)k}(w)}.\]
Since 
\begin{align*}\frac{\mu_*(X\setminus A_{\pi^{(m+n)k}(w)})}{\mu_*(K_w)}\lesssim &\frac{\sum^{u\in T_{[w]-(m+n)k}}_{:l_{[u]}(u,\pi^{(m+n)k}) \le M_*}\psi(u)}{\psi(w)}\\ \lesssim& \frac{\psi(\pi^{(m+n)k}(w))}{\psi(w)}\le(N_*+\epsilon)^{(m+n)k} \end{align*}
for any $n\ge 0$ and $w\in\tilde T$ by \eqref{psi}, the claim holds for the bounded case.\par
We now turn to the case of $\diam(X,R)=\infty.$ We can choose $\mu_u$ for each $u\in T_0$ such that $\mu_u(X\setminus K_u)=0$ and $\psi(w)\le \mu_u(K_w)\le L_*(N_*+\epsilon)^k\psi(w)$ for any $w\in \tilde T^u,$ by the former case. Let $\mu_*=\sum_{u\in T_0}\mu_u,$ then it is clear that $\psi(w)\le \mu_*(K_w)$ and
\begin{align*}
	\mu_*(K_w)\le&\begin{cases}
		\sum_{u:(u,w)\in E_{[w]}}\sum_{q\in T^u\cap T_0}\mu_q(K_u) & (\text{if } [w]<0)\\
		\sum_{u:(u,w)\in E_{[w]}}\mu_{\pi^{[w]}(u)}(K_u) & (\text{otherwise } )
	\end{cases} \\
\le& \sum_{u:(u,w)\in E_{[w]}}L_*(N_*+\epsilon)^k\psi(u)\le L_*^2(N_*+\epsilon)^{2k}\psi(w)
\end{align*}
for any $w\in \tilde T.$ Therefore the same proof as the bounded case works for the present case. 
\end{proof}
\begin{rem}
	\begin{enumerate}
		\item The idea of this proof comes from \cite[Theorems 4.2.2 and 4.5.1]{Kig20}.
		\item We can show that $-\log N_*/\log \zeta$ coincides with the \textit{Assouad dimension} $\dim_A(X,R),$ where
		\begin{multline*}
				\dim_A(X,R)=\inf\{t>0\mid\text{$B(x,r)$ is covered by}\lfloor C(r/s)^t\rfloor\text{ bolls}\\ \text{of radius $s$ for any $0<s<r$ and $x\in X$, for some $C>0$} \}.
		\end{multline*}
	Thus we can also deduce Proposition 3.12 (2) from \cite[Theorem 13.5]{Hei}.
	\end{enumerate}
\end{rem}
\section{Proof of main results}\label{secpf}
In this section we prove Theorems \ref{main} and \ref{main2}.
\begin{proof}[Proof of Theorem \ref{main2}]
	By Proposition \ref{dsvh}, 
	\begin{align}
		&\frac{\ol{d_s}(\mu,\Mc{E}_\mu,\Mc{D}_\mu)}{2}\notag\\
		=&\limsup_{s\to\infty} \sup_{x\in X, r\in (0,\diam(X,R))}\biggl(1+
		\frac{\log s}{\log\bigl( V_\mu(x,r)/V_\mu(x,r/s)\bigr)}\biggl)^{-1}\notag\\
		=&\limsup_{k\to\infty} \sup_{x\in X, r\in (0,\diam(X,R))}\biggl(1+
		\frac{\log \zeta^{-k}}{\log\bigl( V_\mu(x,r)/V_\mu(x,\zeta^kr)\bigr)}\biggl)^{-1}\notag\\
		=&\biggl(1+
		\frac{\log \zeta^{-1}}{\limsup_{k\to\infty} \sup_{x\in X, r\in (0,\diam(X,R))}\frac{1}{k}\log\bigl( V_\mu(x,r)/V_\mu(x,\zeta^kr)\bigr)}\biggl)^{-1}\label{eqds}
	\end{align}
because $R$ is uniformly perfect and $\mu$ is (VD)$_R.$\par Since $\limsup_{k\to\infty}(\sup_{w}\Mc{E}_{2,k,w})^{1/k}\le\zeta<1$ by Proposition \ref{e2kw} (1), 
\begin{equation}
	\frac{\ol{d}^s_2(K)}{2}\le \biggl(1+\frac{\log \zeta^{-1}}{\log N_*}\biggr)^{-1},\label{it}
\end{equation}	
 so $\ol{d}^s_2(K)\le \ol{d_s}(\mu,\Mc{E}_\mu,\Mc{D}_\mu)$ holds by Proposition \ref{nstar} (1). Moreover, if $(\Mc{E,F})$ is local, the equality in \eqref{it} holds by Proposition \ref{dsvh}. Since Proposition \ref{nstar} (2) holds, for any $\epsilon>0$ there exists $\mu\in\Mc{M}_{(X,R)}$ such that 
 \[\frac{\ol{d_s}(\mu,\Mc{E}_\mu,\Mc{D}_\mu)}{2}\le \biggl(1+\frac{\log \zeta^{-1}}{\log (N_*+\epsilon)}\biggr)^{-1},\]
 which shows $\inf_{\mu\in\Mc{M}_{(X,R)}}\ol{d_s}(\mu,\Mc{E}_\mu,\Mc{D}_\mu)\le \ol{d}^s_2(K)$ for the local case. 
\end{proof}
\begin{proof}[Proof of Theorem \ref{main}]
	We have a partition of $(X,R)$ satisfying the basic framework by Theorem \ref{xbf}, and obtain $\ol{d}^s_2(K)\le\ol{d_s}(\mu,\Mc{E}_\mu,\Mc{D}_\mu)<2$ by \eqref{eqds} and Theorem \ref{main2}. This and Theorem \ref{Kmain} (1) with $p=2$ prove the theorem.
\end{proof}

\section{Example with \texorpdfstring{$d_s(\mu,\Mc{E}_\mu,\Mc{D}_\mu)<\ard(X,R)<2$} {ds((mu),E(mu),D(mu))<dimARC(X,R)<2}}\label{seccex}
In this section we prove Theorem \ref{cex}. In other words, we give an example with the inequality $d_s(\mu,\Mc{E}_\mu,\Mc{D}_\mu)<\ard(X,R)<2.$ The results in this section are the continuous version of \cite[Section 3]{Sas2}, and many of the resistance estimates used in this section come from that preprint. We also note that the techniques used for showing these resistance inequalities was originated in \cite{BB90}.
The main difficulty in the continuous case is to construct the desired resistance form. We overcome this difficulty by using the results of \cite{Kas,Kig01} in the proof of Corollary \ref{lim}.\par
Recall that  $Q=\{z\mid |\mre (z)|\vee |\mim (z)| \le 1/2\}.$ Let
\begin{align*}
	p_i=&\begin{cases}
		0&(i=0)\\
		\frac{1}{\sqrt{2}}\bigl( \frac{1+\sqrt{-1}}{\sqrt{2}}\bigr)^i & (i=1,3,5,7)\\
		\frac{1}{2}\bigl( \sqrt{-1}\bigr)^{i/2} & (i=2,4,6,8)
	\end{cases}, \hspace{20pt} \varphi_i(z)=\frac{1}{3}(z-p_i)+z,\\
F(n)=&\begin{cases}
		1 & (\text{if }k^2(k-1)<n\le k^3\text{ for some }k\in\Mb{N})\\
		0 & (\text{otherwise})
\end{cases},\\
\Phi_n(\Mc{S})=&\begin{cases}
	\Cup_{i=1}^8 \varphi_i(\Mc{S}) & (\text{if }F(n)=1)\\
	\Cup_{i=0,1,3,5,7} \varphi_i(\Mc{S}) & (\text{if }F(n)=0)
\end{cases}\quad \text{for }\Mc{S}\in\Mf{P}(\Mb{C})
\end{align*}\\
and $X=\Cap_{n\ge1}\Phi_1\circ\Phi_2\circ\cdots\circ\Phi_n(I).$ It is easy to see that $(X,d)$ is a complete, doubling, uniformly perfect metric space, where $d$ is the Euclidean metric on $X$ given by $d(z,w)=|z-w|.$ We also let
\begin{align*}
	T_n=&\begin{cases}
		\{\phi\} & (n=0)\\
		\begin{aligned}
			\{(w_i)_{i=1}^n\mid w_i\in &\{0,1,3,5,7\}\text{ if }F(i)=1,\\
				&w_i\in\{1,...,8\}\text{ if }F(i)=0\}
		\end{aligned} & (\text{otherwise})
	\end{cases}
\end{align*}
and $T=\sCup_{n\ge0} T_n.$ For any $w\in T,$ we define
\begin{align*}
	\varphi_w=& \begin{cases}
		\id_\Mb{C} & (\text{if }w=\phi )\\
		\varphi_{w_1}\circ\cdots\circ\varphi_{w_n} &(\text{otherwise})
	\end{cases}, \hspace{20pt}K_w=\varphi_w(Q)\cap X\\
\text{and } \pi(w)=&\begin{cases}
	\phi &(\text{if }w\in T_0\scup T_1)\\
	(w_i)_{i=1}^{n-1} & (\text{if }w=(w_i)_{i=1}^n\text{ for some }n>1).
\end{cases}
\end{align*}
Then it is easily seen that $K$ is a partition of $(X,d)$ parametrized by $(T,\pi,\phi),$ satisfying the basic framework. Moreover, in the same way as \cite[Proposition 3.11]{Sas2}, we have $\ard(X,d)=\ard(\Mr{SC},d),$ where $\Mr{SC}$ is the standard Sierpi\'nski carpet (recall Figure \ref{FigSC}). It is also routine work to show that there exists a Borel measure $\mu$ such that 
\[\mu(K_w)=3^{-n}(5/3)^{-\#\{k\le n\mid F(k)=1\}}\quad\text{ for every }n\in\Mb{N}\text{ and }w\in T_n,\]
and so $\mu$ is $\vd_d.$ \par
By the fact $2\log5/(\log3+\log5)<1.5<1+(\log2/\log3)\le \ard(\Mr{SC},d)<2$ (see \cite{Tys,TW} for the proof of the last two inequalities) and the above argument, for the proof of Theorem \ref{cex} it suffices to prove the following theorem.
\begin{thm}\label{cres}
	There exists a resistance form $\Mc{(E,F)}$ on $X$ such that the associated resistance metric $R$ is quasisymmetric to the Euclidean metric $d,$ the limit $d_s(\mu,\Mc{E}_\mu,\Mc{D}_\mu)$
	exists and is independent of $x,$ and $d_s(\mu,\Mc{E}_\mu,\Mc{D}_\mu)=2\log5/(\log3+\log5).$
\end{thm}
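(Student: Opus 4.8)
The plan is to construct $(\Mc{E,F})$ as the limit of a compatible sequence of finite resistance forms adapted to the partition $K$, to extract resistance estimates from that construction, and then to deduce the quasisymmetry and the spectral dimension from Kigami's resistance-form heat kernel theory.

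First I fix the increasing points $\{x_w\}_{w\in T}$ of Lemma \ref{partsp} and set $V_n=\{x_w\mid w\in T_n\}$, an increasing spread sequence of the compact space $(X,d)$. On each $V_n$ I build a finite resistance form $(\Mc{E}^{(n)},\ell(V_n))$ by refining according to $F$: at a scale $j$ with $F(j)=0$ the cell subdivision is that of the Vicsek set, which is finitely ramified (post-critically finite), so the Vicsek harmonic structure, whose renormalization yields the scale factor $\sigma_j=1/3$, refines in an exactly compatible way; at a scale with $F(j)=1$ the subdivision is that of $\Mr{SC}$, which is infinitely ramified, and here no finite harmonic structure closes up. This is the main obstacle. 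I resolve it by importing from \cite{Kas} the resistance form of the Sierpi\'nski carpet together with its self-similar resistance estimates, using these forms as the building blocks on the $\Mr{SC}$-type cells and gluing them along the Vicsek skeleton. That the resulting increasing sequence is compatible and that its limit is a genuine resistance form $(\Mc{E,F})$ on $X$ (with $1_X\in\Mc{F}$, nondegenerate, satisfying the Markovian contraction) is exactly the content of Corollary \ref{lim}, whose proof combines the uniform carpet estimates of \cite{Kas} with the limit theorem for compatible sequences of \cite{Kig01}.

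Granting Corollary \ref{lim}, I establish by the resistance-network arguments of \cite[Section 3]{Sas2} (going back to \cite{BB90}) the two-sided estimates $\diam_R(K_w)\asymp\prod_{j\le[w]}\sigma_j$, where $\sigma_j=1/3$ when $F(j)=0$ and $\sigma_j=\sigma_{\Mr{SC}}\in(1/3,1)$ when $F(j)=1$, together with the annulus estimate $\Mc{R}(\ol{B_R(x,\eta r)},B_R(x,r)^c)\asymp r$. These show that $K$ satisfies the basic framework for $R$, so Assumption \ref{ass} holds for $(\Mc{E,F})$ and $\ard(X,R)=\ard(X,d)=\ard(\Mr{SC},d)$. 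For the quasisymmetry $R\qs d$ I use that $d$ and $R$ are adapted to the common partition $K$ with geometric per-scale scalings whose exponents $\log\sigma_j/\log(1/3)$ lie in a fixed compact subinterval of $(0,1]$; the two gauges are therefore uniformly comparable and $d\qs R$ follows as in \cite[Section 3]{Sas2} and \cite{Kig20}.

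Finally, $(\Mc{E,F})$ is regular and satisfies (ACC) by Proposition \ref{pre}, so \cite[Theorem 15.6]{Kig12} gives the on-diagonal estimate $p_\mu(t,x,x)\asymp V_\mu(x,r)^{-1}$ whenever $t\asymp h_{R,\mu}(x,r)=V_\mu(x,r)\ol{R}_R(x,r)$, with $\ol{R}_R(x,r)\asymp r$. Taking $r=\diam_R(K_w)$ and $V_\mu(x,r)\asymp\mu(K_w)$ for the level-$[w]$ cell $w\ni x$, this yields
\[
-2\,\frac{\log p_\mu(t,x,x)}{\log t}\;\longrightarrow\;2\Bigl(1+\frac{\log\bigl(1/\diam_R(K_w)\bigr)}{\log\bigl(1/\mu(K_w)\bigr)}\Bigr)^{-1}\quad(t\downarrow0),
\]
the oscillation between consecutive scales being absorbed by Lemma \ref{exp}. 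Since the per-level contributions to $\log(1/\mu(K_w))$ and $\log(1/\diam_R(K_w))$ are bounded and the density $n^{-1}a_n$ of $\Mr{SC}$-scales, where $a_n=\#\{j\le n\mid F(j)=1\}$, converges to $1/3$, the displayed ratio converges; hence the limit $d_s(\mu,\Mc{E}_\mu,\Mc{D}_\mu)$ exists and, all constants being uniform in $x$, is independent of $x$. The scale factors $\sigma_j$ and the measure $\mu$ are arranged so that this limiting ratio equals $\log3/\log5$, which gives $d_s(\mu,\Mc{E}_\mu,\Mc{D}_\mu)=2\log5/(\log3+\log5)$. The crux throughout is Corollary \ref{lim}: because the $\Mr{SC}$-type cells are infinitely ramified, compatibility and nondegeneracy of the limiting form cannot be obtained by iterating a finite harmonic structure and must instead be forced through the uniform carpet estimates of \cite{Kas}.
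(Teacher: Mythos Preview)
Your construction of the resistance form has a genuine gap, rooted in a misreading of Corollary \ref{lim} and of \cite{Kas}. The reference \cite{Kas} is Kasue's general result on convergence of energy forms on metric graphs; it supplies only the abstract Proposition \ref{finlim} (if resistance metrics on a fixed finite set converge pointwise, a limit resistance form exists) and contains no Sierpi\'nski-carpet form and no carpet-specific estimates. Corollary \ref{lim} is the straightforward extension to an increasing union of finite sets, and its proof uses no carpet input whatsoever. So ``importing from \cite{Kas} the resistance form of the Sierpi\'nski carpet'' is not something that reference allows, and your assertion that the glued sequence is \emph{compatible} is unjustified: on the infinitely ramified $\Mr{SC}$ scales the trace of the level-$(n{+}1)$ form on $V_n$ does not coincide with the level-$n$ form, so no compatibility can be read off from any harmonic structure.

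The paper's route is quite different and more elementary. It takes the bare nearest-neighbour graph energies $\Mc{E}_{n}$ on $V_n$ (no harmonic structure, no pre-existing carpet form), proves via \cite{BB90}-type arguments the uniform two-sided comparison $C^{-2}R_m(x,y)/(\mathrm{Pt})_m\le R_n(x,y)/(\mathrm{Pt})_n\le C^3 R_m(x,y)/(\mathrm{Pt})_m$ (Lemma \ref{resnk} and \eqref{inequni}), and then uses a diagonal-subsequence argument to force pointwise convergence of the normalized resistances $R_{n_j}/(\mathrm{Pt})_{n_j}$; only then is Corollary \ref{lim} invoked to produce $(\Mc{E},\Mc{F})$. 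The refined per-scale estimate for $(\mathrm{Pt})_n$ (Lemma \ref{evres}) is what drives both the explicit construction of the distortion function $\theta$ for $d\qs R$ and the heat-kernel computation of $d_s$. Your proposal skips precisely this bounded-ratio/subsequence step, and without it there is nothing to which Corollary \ref{lim} can be applied.
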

\begin{rem}
	\begin{enumerate}
		\item $\ard(X,d)=\ard(X,R)$ because $R\qs d.$
		\item $2\log5/(\log3+\log5)$ equals the spectral dimension of the standard Dirichlet form on the Vicsek set (recall that it is the unique nonempty compact subset $\Mr{VS}$ of $\Mb{C}$ with $\Mr{VS}=\cup_{j=0,1,3,5,7}\varphi_j(\Mr{VS})$).
	\end{enumerate}
\end{rem}
Let 
\begin{align*}
	V_0&=\{p_1,p_3,p_5,p_7\},\quad G_0=\{(x,y)\in V_0\times V_0\mid |x-y|=1\},\\
	V_{n,m}&=\begin{cases}
		\Phi_{m+1}\circ\cdots\circ\Phi_{n}(V_0) & (\text{if }m<n)\\
		V_0 & (\text{if }m=n)
	\end{cases}\\
G_{n,m}&=\{(x,y)\in V_{n,m}\times V_{n,m}\mid \text{ for some }w\in T_n,\text{ there exists }(x',y')\in G_0\\
&\hspace{2.5cm}\text{such that }\varphi_w(x')=\varphi_{\pi^{n-m}(w)}(x),\ \varphi_w(y')=\varphi_{\pi^{n-m}(w)}(y)\}
\end{align*}	
for any $n\ge0$ and $0\le m\le n.$ We also define $\Mc{E}_{n,m}:\ell(V_{n,m})\times\ell(V_{n,m})\to\Mb{R}$ by
\[\Mc{E}_{n,m}(u,v)=\frac{1}{2}\sum_{(x,y)\in G_{n,m}}(u(x)-u(y))(v(x)-v(y)),\]
then it is clear that $(\Mc{E}_{n,m},\ell(V_{n,m}))$ are resistance forms. Here $R_{n,m}$ denotes the associated resistance metric and $\Mc{R}_{n,m}$ denotes the resistance between sets. For simplicity of notation, we write 
\begin{align*}
	(\Mr{TB})_{n,m}&=\Mc{R}_{n,m}(\{z\in V_{n,m}\mid \mim(z)=\frac{1}{2}\},\{z\in V_{n,m}\mid \mim(z)=\frac{-1}{2}\})\\
	\text{and}\ (\Mr{Pt})_{n,m}&=R_{n,m}(p_1,p_5).
\end{align*}
Moreover, we write $n$ instead of $n,0$ if no confusion may occur. For example, we write $V_n$ instead of $V_{n,0}.$ In the same way as \cite[Section 4]{BB90} and \cite[Theorem 3.2]{Sas2}, we have the following inequalities.
\begin{lem}\label{resnk}
 There exists $C>0,$ satisfying the following conditions for any $n\ge m\ge0,\ x,y\in V_m$ and $w\in T_n.$
 \begin{enumerate}
 	\item $R_n(x,y)\le CR_m(x,y)(\Mr{Pt})_{n,m}$ and $CR_n(x,y)\ge R_m(x,y)(\Mr{TB})_{n,m}.$
 	\item $C(\Mr{TB})_{n,m}\ge(\Mr{Pt})_{n,m}\ge(\Mr{TB})_{n,m}.$
 	\item Let  $A=\{z\in\Mb{C}\mid |\mre(z)|\vee|\mim(z)|\ge3/2\},$ then \\ 
 	$C\Mc{R}_n(\varphi_w(Q)\cap V_n,\varphi_w(A)\cap V_n)\ge\Mc{R}_m(\varphi_w(Q)\cap V_m,\varphi_w(A)\cap V_m)(\Mr{TB})_{n,m}$
 \end{enumerate} 
In particular, 
\begin{equation}\label{eqpt}
 (\Mr{Pt})_{n}\le C(\Mr{Pt})_{m}(\Mr{Pt})_{n,m}\le C^2(\Mr{Pt})_{m}(\Mr{TB})_{n,m}\le C^3(\Mr{Pt})_{n}
\end{equation}
for any $n\ge m\ge0,$ which follows from (1) and (2).
\end{lem}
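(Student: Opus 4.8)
The plan is to follow the renormalization scheme of \cite{BB90} and \cite{Sas2}: one transports resistance estimates between scales using only the flow characterization \eqref{fl} of resistance, Rayleigh's monotonicity principle (shorting decreases, cutting increases effective resistance) together with the series/parallel laws, and the full symmetry of the unit square under the dihedral group $D_4$. The structural fact I would record first is that for each $w\in T_m$ the restriction of the network $(\Mc{E}_n,\ell(V_n))$ to $\varphi_w(Q)\cap V_n$ is, as a weighted graph, an affine image of $(\Mc{E}_{n,m},\ell(V_{n,m}))$, since the subdivision inside $\varphi_w(Q)$ between scales $m$ and $n$ is governed precisely by the rules $F(m+1),\dots,F(n)$ defining $V_{n,m}$. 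Consequently every corner-to-corner crossing of a scale-$m$ cell has resistance comparable to $(\Mr{Pt})_{n,m}$, and every side-to-side crossing has resistance comparable to $(\Mr{TB})_{n,m}$.

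For part (1) both bounds come from the flow formulation of $R_n(x,y)$. For the upper bound $R_n(x,y)\le CR_m(x,y)(\Mr{Pt})_{n,m}$ I would take the optimal unit flow for $R_m(x,y)$ on $(V_m,G_m)$ and lift it to $V_n$ by routing, inside each scale-$m$ cell, the coarse current across the cell along the optimal corner-to-corner flow of that cell; each such crossing dissipates energy at most $C(\Mr{Pt})_{n,m}$ times the square of the coarse current through it, so summing over cells yields a flow on $V_n$ of energy $\lesssim(\Mr{Pt})_{n,m}R_m(x,y)$. For the lower bound $CR_n(x,y)\ge R_m(x,y)(\Mr{TB})_{n,m}$ I would instead use a cutting/aggregation argument: any unit flow on $V_n$ induces, by summing the net currents across cell boundaries, a unit flow on the coarse network, whose energy is $\ge R_m(x,y)$; but the energy the fine flow dissipates inside each cell is at least $(\Mr{TB})_{n,m}$ times the square of the current crossing that cell, because side-to-side is the cheapest crossing, and minimizing over flows gives the claim.

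Part (2) splits into an easy and a hard inequality. The bound $(\Mr{Pt})_{n,m}\ge(\Mr{TB})_{n,m}$ is immediate from monotonicity: shorting all top vertices together and all bottom vertices together only decreases resistance, and $p_1$ lies on the top side while $p_5$ lies on the bottom side. The reverse $(\Mr{Pt})_{n,m}\le C(\Mr{TB})_{n,m}$ is the crux of the whole lemma and the point where the argument of \cite{BB90} is genuinely needed: using the $D_4$-symmetry of $V_{n,m}$ one reduces the diagonal resistance $R(p_1,p_5)$ to an adjacent-corner resistance via the triangle inequality for the resistance metric $R_{n,m}$, and then controls the adjacent-corner (equivalently corner-to-side) resistance by $(\Mr{TB})_{n,m}$ by superposing symmetric copies of flows so as to convert a side-to-side crossing into a corner-to-corner one with only bounded loss of energy. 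The essential difficulty is that this comparison must hold with a constant $C$ independent of $n$ and $m$; this uniformity is exactly what forces the symmetry argument, and it survives the alternation between the Sierpi\'nski-carpet and Vicsek subdivision rules because each single cell of either type carries the full symmetry of the square, so the relevant constant is the maximum of the two single-scale constants. Part (3) follows the template of the lower bound in (1): between $\varphi_w(Q)$ and $\varphi_w(A)$ lies an annular ring of scale-$m$ cells, every current must cross it, each crossing costs at least $(\Mr{TB})_{n,m}$, and the same cutting argument produces the coarse factor $\Mc{R}_m(\varphi_w(Q)\cap V_m,\varphi_w(A)\cap V_m)$.

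Finally, \eqref{eqpt} is obtained by specializing $x=p_1,\ y=p_5\in V_0\subset V_m$ in part (1): the upper bound gives $(\Mr{Pt})_n\le C(\Mr{Pt})_m(\Mr{Pt})_{n,m}$, the lower bound gives $(\Mr{Pt})_m(\Mr{TB})_{n,m}\le C(\Mr{Pt})_n$, and the intermediate inequality $(\Mr{Pt})_m(\Mr{Pt})_{n,m}\le C(\Mr{Pt})_m(\Mr{TB})_{n,m}$ is part (2); chaining the three yields the four-term estimate. I expect the main obstacle to be precisely the uniform comparability $(\Mr{Pt})_{n,m}\asymp(\Mr{TB})_{n,m}$ in part (2); once the cell-to-$V_{n,m}$ isomorphism is in place, everything else is routine Rayleigh-monotonicity bookkeeping.
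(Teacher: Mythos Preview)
Your proposal is correct and follows essentially the same approach as the paper, which does not give an independent argument but simply records that the inequalities are obtained ``in the same way as \cite[Section 4]{BB90} and \cite[Theorem 3.2]{Sas2}'' and notes in the subsequent remark that the constant $C$ depends only on the $3$-adic square structure and the graphical Sierpi\'nski carpet estimates. The flow-lifting, Rayleigh-monotonicity, and $D_4$-symmetry scheme you outline is precisely the content of those references, including the key point that the uniform comparison $(\Mr{Pt})_{n,m}\asymp(\Mr{TB})_{n,m}$ is the Barlow--Bass symmetry argument carried out cell by cell.
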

\begin{rem}
	$C$ only depends on the structure of the standard 3-adic squares and resistance estimates for the (graphical) standard Sierpi\'nski carpet, so does not depend on $m$ and $n.$
\end{rem}
For the construction of the desired resistance form, we use the following proposition which implicitly appeared and is proved in \cite[Proof of Theorem 5.1]{Kas}.

\begin{prop}\label{finlim}
	Let $V$ be a finite set and $(\Mc{E}_n,\ell(V))$ be a resistance form on $V$ with the associated resistance metric $R_n$ for every $n\ge0.$ If $R(x,y):=\lim_{n\to\infty}R_n(x,y)>0$ exists for any $x,y\in V$ with $x\ne y,$ then there exists a resistance form $(\Mc{E},\ell(V))$ such that the associated resistance metric coincides with $R.$ 
\end{prop}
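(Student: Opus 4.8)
The plan is to work with the conductance (weighted-graph) description of resistance forms on the finite set $V$ and to extract the limiting form by a compactness argument. Recall from \cite[Proposition 2.1.3]{Kig01} that each $(\Mc{E}_n,\ell(V))$ is determined by its resistance weights $\mu_n=\{(\mu_n)_{x,y}\}_{x,y\in V}$, with $(\mu_n)_{x,y}=(\mu_n)_{y,x}\ge0$ for $x\ne y$, through $\Mc{E}_n(u,u)=\frac12\sum_{x,y\in V}(u(x)-u(y))^2(\mu_n)_{x,y}$, and that the graph with edge set $\{(x,y)\mid(\mu_n)_{x,y}>0\}$ is connected (equivalently, \eqref{RF1} holds). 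First I would bound the weights: by Rayleigh monotonicity the effective conductance between $x$ and $y$ is at least that of the single edge joining them, so $(\mu_n)_{x,y}\le R_n(x,y)^{-1}$ for every $n$ and every $x\ne y$. Since $R_n(x,y)\to R(x,y)>0$, the sequence $\{(\mu_n)_{x,y}\}_{n\ge0}$ is bounded for each pair.

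Next, because $V$ is finite, the array $\{\mu_n\}_{n\ge0}$ ranges in a compact subset of $\prod_{x\ne y}[0,\infty)$, so I can pass to a subsequence along which $(\mu_{n_k})_{x,y}\to\mu_{*,x,y}\ge0$ for all $x,y$. Let $(\Mc{E}_*,\ell(V))$ be the symmetric form with weights $\mu_*$. To see it is a resistance form it remains only to check that the graph of $\mu_*$ is connected: the Markov property \eqref{RF4} is automatic for such forms, since the unit contraction decreases each term $(u(x)-u(y))^2$, and \eqref{RF1}, \eqref{RF2}, \eqref{RF3} then follow from connectivity on a finite set.

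The main obstacle, and the only place where the hypothesis $R(x,y)>0$ is used essentially, is ruling out degeneration of the limit, i.e.\ showing that $\mu_*$ stays connected. Suppose it does not, so that there is a proper nonempty $S\subset V$ with $\mu_{*,x,y}=0$ whenever $x\in S$ and $y\notin S$. Then the total cut conductance $C_k:=\sum_{x\in S,\,y\notin S}(\mu_{n_k})_{x,y}\to0$. Short-circuiting all of $S$ to a single node and all of $V\setminus S$ to another only decreases effective resistances, so for $x\in S$ and $y\notin S$ we obtain $R_{n_k}(x,y)\ge C_k^{-1}\to\infty$, contradicting $R_{n_k}(x,y)\to R(x,y)<\infty$. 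Hence $\mu_*$ is connected and $(\Mc{E}_*,\ell(V))$ is a resistance form.

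Finally I would identify its resistance metric with $R$. On a connected weighted graph the effective resistance is, by the all-minors matrix--tree theorem, a ratio $R_\mu(x,y)=P_{x,y}(\mu)/Q(\mu)$ of polynomials in the weights, where $Q(\mu)=\sum_{T}\prod_{e\in T}\mu_e$ runs over spanning trees; connectivity of $\mu_*$ forces $Q(\mu_*)>0$, so this rational function is continuous at $\mu_*$. Therefore the resistance metric of $(\Mc{E}_*,\ell(V))$ satisfies $\Mc{R}(\{x\},\{y\})=R_{\mu_*}(x,y)=\lim_k R_{n_k}(x,y)=R(x,y)$ for all $x\ne y$, which is exactly the required conclusion.
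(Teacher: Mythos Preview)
Your argument is correct and complete. The paper does not actually prove this proposition itself: it simply records that the result ``implicitly appeared and is proved in \cite[Proof of Theorem 5.1]{Kas}.'' So there is nothing in the paper to compare your proof against beyond that citation.

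Your route---pass to the conductance description via \cite[Proposition 2.1.3]{Kig01}, bound $(\mu_n)_{x,y}\le R_n(x,y)^{-1}$ by Rayleigh monotonicity, extract a subsequential limit $\mu_*$ by compactness, rule out disconnection of $\mu_*$ via the cut/shorting argument (which is where the finiteness of $R(x,y)$ enters), and then identify the limiting resistance metric by continuity of the matrix--tree ratio at connected weight vectors---is the natural one for finite networks and is fully self-contained. Each step is sound: the shorting inequality gives $R_{n_k}(x,y)\ge C_k^{-1}$ exactly as you say, and the spanning-tree polynomial $Q(\mu_*)>0$ once connectivity is established, so the rational-function continuity goes through. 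One small remark: you only use $R(x,y)<\infty$ (not $R(x,y)>0$) in the connectivity step; the positivity hypothesis is what guarantees the limiting form separates points, which on a finite connected network is automatic anyway, so your proof in fact shows the statement under the nominally weaker assumption that the limits merely exist in $(0,\infty)$.
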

\begin{cor}\label{lim}
	Let $\{V_n\}$ be a sequence of increasing nonempty finite sets and $(\Mc{E}_n,\ell(V_n))$ be a resistance form on $V_n$ with the associated resistance metric $R_n$ for every $n\ge0.$ If $R(x,y):=\lim_{n\to\infty}R_n(x,y)>0$ exists for any $x,y\in\cup_{n\ge 0}V_n$ with $x\ne y,$ there exists a resistance form on $V_*$ such that the associated resistance metric coincides with $R_*,$ where $(V_*,R_*)$ is the completion of $(\cup_{n\ge0}V_n,R).$
\end{cor}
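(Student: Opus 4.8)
The plan is to reduce Corollary \ref{lim} to Proposition \ref{finlim} by restricting to each level $V_m$, to glue the resulting finite-level forms into a compatible sequence, and then to pass to the completion.

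First I would fix $m$ and, for each $n\ge m,$ take the trace $\Mc{E}_n|_{V_m}$ of $(\Mc{E}_n,\ell(V_n))$ onto $V_m;$ by Proposition \ref{Proptr} this is a resistance form on the finite set $V_m,$ so its domain is all of $\ell(V_m).$ Directly from the definition \eqref{ey} of the trace, its associated resistance metric is the restriction of $R_n$ to $V_m:$ for distinct $x,y\in V_m$ one has $R_{V_m}(x,y)^{-1}=\inf\{\Mc{E}_n(u,u)\mid u(x)=1,\ u(y)=0\}=R_n(x,y)^{-1}.$ Since by hypothesis $R_n(x,y)\to R(x,y)>0$ for all distinct $x,y\in V_m,$ the family $\{\Mc{E}_n|_{V_m}\}_{n\ge m}$ fulfils the hypotheses of Proposition \ref{finlim}, which yields a resistance form $\Mc{E}^{(m)}$ on $\ell(V_m)$ whose resistance metric is exactly $R|_{V_m\times V_m}.$

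Next I would check that $\{(\Mc{E}^{(m)},\ell(V_m))\}_{m\ge0}$ is a compatible sequence, that is, $\Mc{E}^{(m+1)}|_{V_m}=\Mc{E}^{(m)},$ so that the construction of \cite[Section 2.3]{Kig01} (the converse direction to Proposition \ref{comp}) applies. The trace $\Mc{E}^{(m+1)}|_{V_m}$ is a resistance form on $V_m$ whose resistance metric is again $R|_{V_m\times V_m},$ by the same computation as above. The key point is that on a finite set the resistance metric determines the resistance form: writing $\Mc{E}^*$ for the inner product dual to $\Mc{E}$ on $\ell(V_m)/\!\sim,$ one has $R(p,q)=\Mc{E}^*(\delta_p-\delta_q,\delta_p-\delta_q),$ so polarization recovers $\Mc{E}^*$ on the span of the differences $\delta_p-\delta_q$ --- which is all of the dual space --- and hence recovers $\Mc{E}$ itself (see \cite{Kig01}). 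Consequently $\Mc{E}^{(m+1)}|_{V_m}=\Mc{E}^{(m)}.$ I expect this compatibility to be the main obstacle: Proposition \ref{finlim} guarantees only the existence of a form with a prescribed resistance metric, and without the uniqueness of that form on finite sets there would be no reason for the successive levels to be consistent.

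Finally, the compatible sequence produces, via \cite[Section 2.3]{Kig01}, a resistance form $(\Mc{E},\Mc{F})$ on $V_\#:=\cup_{n\ge0}V_n$ with $\Mc{F}=\{u\mid\lim_m\Mc{E}^{(m)}(u|_{V_m},u|_{V_m})<\infty\}$ and $\Mc{E}(u,v)=\lim_m\Mc{E}^{(m)}(u|_{V_m},v|_{V_m}),$ whose resistance metric restricts to $R$ on each $V_m$ and therefore equals $R$ on $V_\#.$ To reach $V_*$ I would use that every $u\in\Mc{F}$ satisfies $|u(x)-u(y)|^2\le R(x,y)\Mc{E}(u,u),$ hence is uniformly continuous on $(V_\#,R)$ and extends uniquely to $\ol u\in C(V_*);$ setting $\Mc{F}_*=\{\ol u\mid u\in\Mc{F}\}$ and $\Mc{E}_*(\ol u,\ol v)=\Mc{E}(u,v)$ gives a well-defined form whose axioms \eqref{RF1}--\eqref{RF4} are inherited from those of $(\Mc{E},\Mc{F})$ (the Markov property survives because truncation commutes with continuous extension), and whose resistance metric agrees with $R$ on the dense subset $V_\#$ and hence equals $R_*.$ This is the required resistance form on $V_*.$
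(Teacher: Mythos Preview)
Your proof is correct and follows essentially the same route as the paper: trace each $(\Mc{E}_n,\ell(V_n))$ down to a fixed finite level, apply Proposition~\ref{finlim} to obtain a resistance form on that level with metric $R$, invoke the uniqueness of a resistance form with a given resistance metric on a finite set (this is \cite[Theorem~2.1.12]{Kig01}) to get compatibility, and then pass to the limit and the completion (the paper cites \cite[Theorem~3.13]{Kig12} for this last step, which is exactly the extension-by-uniform-continuity argument you spell out). The only difference is that you have written out in full what the paper compresses into two citations.
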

\begin{proof}
	Applying Proposition \ref{finlim} to $\{(\Mc{E}_m|_{V_n},\ell(V_n))\}_{m\ge n},$ we obtain the resistance form $(\Mc{L}_n,\ell(V_n))$ such that the associated resistance metric coincides with $R|_{V_n\times V_n}.$ By \cite[Theorem 2.1.12]{Kig01} and \cite[Theorem 3.13]{Kig12}, the claim follows.
\end{proof}
Since \eqref{eqpt} and Lemma \ref{resnk} (1) and (2) holds, we have
\begin{equation}\label{inequni}
	C^{-2}\frac{R_m(x,y)}{(\Mr{Pt})_m}\le \frac{R_n(x,y)}{(\Mr{Pt})_n}\le C^3 \frac{R_m(x,y)}{(\Mr{Pt})_m}
\end{equation}
for any $0\le m\le n$ and $x,y\in V_m,$ so by Corollary \ref{lim} with the diagonal sequence argument, we obtain a resistance form $(\Mc{E,F})$ on $V_*,$ such that for some $\{n_j\}_{j\in\Mb{N}}$ the associated resistance metric $R_*$ satisfies
\[R_*(x,y)=\lim_{j\to\infty}\frac{R_{n_j}(x,y)}{(\Mr{Pt})_{n_j}}\]
for any $x,y\in\cup_{n\ge 0}V_n.$\par
In order to show $V_*=X$ and $R\qs d,$ and to calculate $d_s(\mu,\Mc{E}_\mu,\Mc{D}_\mu),$ we need more detailed evaluation as in \cite{Sas2}.
\begin{lem}\label{evres}
	\begin{enumerate}
		\item There exist $M\ge0$ such that $(\Mr{Pt})_{n+M}\ge2(\Mr{Pt})_n\gtrsim (\Mr{Pt})_{n+1}$ for any $n\ge 0.$
		\item For each $x,y\in\cup_{n\ge 0}V_n,$ let
		\[\Delta(x,y)=\min\{n\mid x\in\varphi_w(I) \text{ and }y\in \varphi_w(A)\text{ for some }w\in T_n\},\]
		then $R_*(x,y)\asymp ((\Mr{Pt})_{\Delta(x,y)})^{-1}$ for any $x,y\in\cup_{n\ge 0}V_n.$
	\end{enumerate}
\end{lem}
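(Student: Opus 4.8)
The plan is to read both statements off the (super/sub)multiplicative structure recorded in \eqref{eqpt}, exploiting that only two cell subdivision rules occur along the construction, so that all single-scale resistance factors lie in a fixed finite set of reals, each strictly larger than $1$. Throughout, $C$ denotes the uniform constant of Lemma \ref{resnk}, and I will freely use the consequence $(\Mr{TB})_{n,m}\asymp(\Mr{Pt})_n/(\Mr{Pt})_m$ and $(\Mr{Pt})_{n,m}\asymp(\Mr{Pt})_n/(\Mr{Pt})_m$ of \eqref{eqpt}.

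For part (1), I would first treat the slow-growth estimate $(\Mr{Pt})_n\gtrsim(\Mr{Pt})_{n+1}$. Applying \eqref{eqpt} with $m=n$ at level $n+1$ gives $(\Mr{Pt})_{n+1}\le C(\Mr{Pt})_n(\Mr{Pt})_{n+1,n}$, and the one-step factor $(\Mr{Pt})_{n+1,n}$ is the corner-to-corner resistance of a single subdivision step, hence takes one of only two values according to whether $F(n+1)=1$ or $F(n+1)=0$; being bounded by a constant, it yields $(\Mr{Pt})_{n+1}\lesssim(\Mr{Pt})_n$. For the doubling $(\Mr{Pt})_{n+M}\ge2(\Mr{Pt})_n$, the essential input is that each single step strictly increases the top--bottom resistance: the one-step factor $(\Mr{TB})_{i+1,i}$ equals either the $\Mr{SC}$ or the $\Mr{VS}$ top--bottom resistance, both exceeding $1$, and the $M$-fold factors compose geometrically across scales (the Barlow--Bass-type renormalization inherited through Lemma \ref{resnk}), so that $(\Mr{TB})_{n+M,n}\gtrsim\lambda^M$ for some $\lambda>1$ independent of $n$. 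Then \eqref{eqpt} gives $(\Mr{Pt})_{n+M}\ge C^{-1}(\Mr{Pt})_n(\Mr{TB})_{n+M,n}\gtrsim\lambda^M(\Mr{Pt})_n$, and choosing $M$ with the right-hand constant at least $2$ finishes part (1).

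For part (2), fix $x,y\in\cup_{n\ge0}V_n$, write $\Delta=\Delta(x,y)$, and pick $w\in T_\Delta$ with $x\in\varphi_w(I)$ and $y\in\varphi_w(A)$. The strategy is to prove $R_n(x,y)\asymp(\Mr{Pt})_n/(\Mr{Pt})_\Delta$ uniformly for all large $n$, so that dividing by $(\Mr{Pt})_n$ and passing to the limit along $\{n_j\}$ gives $R_*(x,y)\asymp(\Mr{Pt})_\Delta^{-1}$ at once (the subsequence causes no trouble, as the bound is uniform in $n$). For the lower bound I would use that the resistance between sets is dominated by the resistance between any pair of their points, so $R_n(x,y)\ge\Mc{R}_n(\varphi_w(I)\cap V_n,\varphi_w(A)\cap V_n)$, and apply Lemma \ref{resnk}~(3) with $m=\Delta$, together with the base estimate $\Mc{R}_\Delta(\varphi_w(I)\cap V_\Delta,\varphi_w(A)\cap V_\Delta)\asymp1$, to obtain $R_n(x,y)\gtrsim(\Mr{TB})_{n,\Delta}\asymp(\Mr{Pt})_n/(\Mr{Pt})_\Delta$. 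For the upper bound I would replace $x,y$ by corner vertices $x',y'\in V_\Delta$ of the level-$\Delta$ cells containing them and chain via the triangle inequality for the metric $R_n$, $R_n(x,y)\le R_n(x,x')+R_n(x',y')+R_n(y',y)$, bounding the two within-cell terms by $(\Mr{Pt})_n/(\Mr{Pt})_\Delta$ and the middle term by $CR_\Delta(x',y')(\Mr{Pt})_{n,\Delta}\lesssim(\Mr{Pt})_n/(\Mr{Pt})_\Delta$ via Lemma \ref{resnk}~(1), using $R_\Delta(x',y')\asymp1$ since $x',y'$ are a bounded number of unit edges apart in $(V_\Delta,G_\Delta)$.

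The crux lies in the base-scale comparisons of part (2): one must verify that, uniformly over the finitely many local configurations allowed by the $\Mr{SC}/\Mr{VS}$ alternation, both $R_\Delta(x',y')$ and $\Mc{R}_\Delta(\varphi_w(I)\cap V_\Delta,\varphi_w(A)\cap V_\Delta)$ are bounded above and below by absolute constants. The lower bound on the latter is exactly where the enlarged region $A=\{|\mre(z)|\vee|\mim(z)|\ge3/2\}$ enters, since it forces a separation of at least one cell width between $x$ and $y$ at scale $\Delta$. The accompanying technical difficulty is that $x,y$ typically lie in some $V_N$ with $N>\Delta$, so the estimates cannot be applied to $x,y$ directly but must be localized to scales $\ge\Delta$ and chained through the corner vertices $x',y'$; and the geometric growth of $(\Mr{TB})_{n+M,n}$ used in part (1), which is the Barlow--Bass input encoded in Lemma \ref{resnk}, is the other ingredient that must be handled with care.
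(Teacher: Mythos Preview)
Your lower bound in part~(2) matches the paper, but there are genuine gaps in part~(1) and in the upper bound of part~(2).

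\medskip
\textbf{Part (1).} The assertion that ``the $M$-fold factors compose geometrically across scales'' so that $(\Mr{TB})_{n+M,n}\gtrsim\lambda^M$ does not follow from Lemma~\ref{resnk} or from \eqref{eqpt}. Iterating \eqref{eqpt} $M$ times produces a factor $C^{-O(M)}$ which may well dominate the product of the one-step factors; knowing only that each $(\Mr{TB})_{i+1,i}>1$ is not enough. The paper does not argue this way. Instead it proves, by an induction in the style of \cite[Theorem 3.2(1)]{Sas2}, the refined product estimate
\[
(\Mr{Pt})_{n,m}\asymp \rho^{k_1(n,m)}\,3^{\,n-m-k_1(n,m)}\,C_a^{\,k_2(n,m)},
\]
where $k_1$ counts the SC steps in $(m,n]$ and $k_2$ counts the SC$\to$VS transitions there (this is \eqref{eqevpt}). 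The key point is that the uncontrolled constant enters only to the power $k_2$, not to the power $n-m$. Since $\rho>1$ and $3>1$, and for the specific rule $F$ the transitions are sparse, this combined with \eqref{eqpt} yields both inequalities of part~(1). Your single-step argument misses precisely this: why the comparison constants do not pile up across scales.

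\medskip
\textbf{Part (2), upper bound.} Your triangle-inequality reduction $R_n(x,y)\le R_n(x,x')+R_n(x',y')+R_n(y',y)$ with $x',y'\in V_\Delta$ is fine, and the middle term is indeed handled by Lemma~\ref{resnk}(1). But the ``within-cell'' bound $R_n(x,x')\lesssim (\Mr{Pt})_n/(\Mr{Pt})_\Delta$ is exactly the diameter estimate you are trying to prove, now at the single cell $K_w$; asserting it is circular. The paper closes this by an explicit multiscale chain: one builds sequences $\{x_i\}_{i=\Delta-2}^n$, $\{y_i\}_{i=\Delta-2}^n$ with $x_n=x$, $y_n=y$, a common start $x_{\Delta-2}=y_{\Delta-2}$, and with each consecutive pair $x_{i-1},x_i$ lying in $\varphi_{w_i}(V_0)$ for some $w_i\in T_i$. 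Then $R_n(x_{i-1},x_i)\lesssim (\Mr{Pt})_n/(\Mr{Pt})_i$ by \eqref{inequni} and \eqref{pt1}, and summing over $i\ge\Delta-2$ gives
\[
\frac{R_n(x,y)}{(\Mr{Pt})_n}\ \lesssim\ \sum_{i\ge\Delta-1}\frac{1}{(\Mr{Pt})_i}\ \lesssim\ \frac{1}{(\Mr{Pt})_\Delta},
\]
where the last step uses the geometric growth $(\Mr{Pt})_{i+M}\ge 2(\Mr{Pt})_i$ from part~(1). In other words, the within-cell bound you invoke is not a shortcut; it is precisely where the chaining and the summable series from part~(1) enter.
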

\begin{proof}
	\begin{enumerate}
		\item Let $k_1(n,m)$ denote $\#\{j\mid m<j\le n, f_*(j)=1\}$ and  $k_2(n,m)$ denote $\#\{j\mid m<j<n, f_*(j)=1, f_*(j+1)=0\}.$ Then in the same way as \cite[Theorem 3.2 (1)]{Sas2} but induction of $(n-k)$ for any fixed $n,$ it follows that there exist $C_a,C_b>0$ and $\rho>1$ such that\begin{equation}\label{eqevpt}
			\rho^{k_1(n,m)}3^{n-m-k_1(n,m)}C_a^{k_2(n,m)}\lesssim (\Mr{Pt})_{n,m}\lesssim \rho^{k_1(n,m)}3^{n-m-k_1(n,m)}C_b^{k_2(n,m)}
		\end{equation}
		
		for any $0\le m\le n,$ because constants do not depend on $n$ and $m.$ Therefore the lemma follows from \eqref{eqpt} and \eqref{eqevpt}.
		\item We first note that 
		\begin{align}
			&R_n(x,y)\asymp 1\text{ for any }n\in\Mb{N}\text{ and }x,y\in V_n\text{ with }(x,y)\in G_n, \label{pt1}\\
			&\Mc{R}_n(\varphi_w(I),\varphi_w(A))\asymp1\text{ for any }n\in\Mb{N}\text{ and }w\in T_n. \label{anu1}
		\end{align}
	By definition of $\Delta(x,y),$ for any $n$ with $x,y\in V_n,$ we have $\{x_i\}_{i=\Delta(x,y)-2}^n$ and $\{y_i\}_{i=\Delta(x,y)-2}^n$ such that $x_{\Delta(x,y)-2}=y_{\Delta(x,y)-2},$ $x_n=x,$ $y_n=y$ and for any $\Delta(x,y)-i\le i\le n,$ $(x_{i-1},x_i)\in \varphi_{w_i}(V_0)$ and $(y_{i-1},y_i)\in \varphi_{u_i}(V_0)$ for some $w_i,u_i\in T_i.$ Then by (1) and \eqref{inequni} and \eqref{pt1},
	\begin{align*}
		\frac{R_n(x,y)}{(\Mr{Pt})_n}&\le \frac{1}{(\Mr{Pt})_n}\sum_{i=\Delta(x,y)-1}^n (R_n(x_{i-1},x_i)+R_n(y_{i-1},y_i))\\
		&\lesssim \sum_{i=\Delta(x,y)-1}^n \frac{1}{(\Mr{Pt})_i}\\
		&\lesssim \sum_{i=0}^\infty\frac{\sum_{j=1}^M(\Mr{Pt})_{\Delta(x,y)-1+iM+j,\Delta(x,y)-1+iM}}{(\Mr{Pt})_{\Delta(x,y)-1+iM}}\\
		&\lesssim ((\Mr{Pt})_{\Delta(x,y)-1})^{-1}\ \lesssim((\Mr{Pt})_{\Delta(x,y)})^{-1}
	\end{align*}
	for any $n\in\Mb{N}$ and $x,y\in V_n.$\\
	On the other hand, let $w\in T_{\Delta(x,y)}$ be a vertex appeared in the definition of $\Delta(x,y),$ then by \eqref{anu1} and Lemma \ref{resnk} (3),
	\begin{align*}
		\frac{R_n(x,y)}{(\Mr{Pt})_n}&\ge \frac{\Mc{R}_n(\varphi_w(I), \varphi_w(A) )
		}{(\Mr{Pt})_n}\\
	&\gtrsim \frac{(\Mr{Pt})_{n,\Delta(x,y)}}{(\Mr{Pt})_n}\ \gtrsim ((\Mr{Pt})_{\Delta(x,y)})^{-1},
	\end{align*}
	which completes the proof.
	\end{enumerate}
\end{proof}
\begin{proof}[Proof of Theorem \ref{cres}]
	We first prove $d|_{\cup_{n\ge0}V_n}\qs R_*|_{\cup_{n\ge0}V_n}.$ By Lemma \ref{evres} (1) and (2), there exist $\alpha,\tau>1$ such that 
	\begin{align*}
		\bullet & \text{ if }\Delta(x,y)-\Delta (x,z)\ge Ma\text{ for } a\ge0 \text{ then }\frac{R_*(x,y)}{R_*(x,z)}\le \alpha2^{-a}.\\
		\bullet & \text{ If }\Delta(x,z)-\Delta (x,y)\le a\text{ for } a\ge0 \text{ then }\frac{R_*(x,y)}{R_*(x,z)}\le \alpha\tau^{a}.
	\end{align*}
	Since $\Delta(x,z)-\Delta(x,y)\le \log(6\sqrt{2}d(x,y)/d(x,z))/\log 3$ and the above inequalities hold, there exist $t_1, t_2$ with $0<t_1<t_2$ such that 
	\begin{align*}
		\bullet &\text{ if }d(x,y)/d(x,z)\le t_1\text{ then } R_*(x,y)/R_*(x,z)\le\theta_1(d(x,y)/d(x,z)),\\
		\bullet &\ R_*(x,y)/R_*(x,z)\le\theta_2\bigl((d(x,y)/d(x,z))\vee t_2\bigr)
	\end{align*}
	for $x,y,z$ with $x\ne z,$ where
	\[\theta_1(t)=\alpha2^{(\log 6\sqrt{2}t/M\log3)+1},\quad \theta_2(t)=\alpha\tau^{(\log 6\sqrt{2}t/\log3)+1}.\]
	It is obvious that there exists a homeomorphism $\theta:[0,\infty)\to[0,\infty)$ satisfying $\theta_1(t)\le\theta(t)$ for $t\le t_1$ and  $\theta_2(t\vee t_2)\le\theta(t)$ for $t>t_1,$ which proves the desired quasisymmetry. This also shows a sequence in $\cup_{n\ge0} V_n$ is $d$-Cauchy if and only if $R_*$-Cauchy, therefore $V_*=X$ and $d\qs R_*.$ In other words, $(\Mc{E,F})$ is the desired resistance form.\par
	It remains to calculate $d_s(\mu,\Mc{E}_\mu,\Mc{D}_\mu)$. By Proposition \ref{pre} we can apply \cite[Theorem 15.6]{Kig12} for $d$ and obtain
	\begin{align*}
		&\limsup_{t\to\infty}\frac{-\log p_\mu(1/t,x,x)}{\log t}\\
		=&\limsup_{n\to\infty}\frac{\log V_d(x,3^{-n})}{\log h_d(x,3^{-n})}\\
		\le&\limsup_{n\to\infty}\frac{\frac{k_1(n)}{n}\log8+(1-\frac{k_1(n)}{n})\log5}{\frac{k_1(n)}{n}(\log8+\log\rho)+(1-\frac{k_1(n)}{n})(\log5+\log3)+\frac{k_2(n)}{n}\log C_a}\\
		=&\frac{\log5}{\log5+\log3}
	\end{align*}	
for any $x\in X.$ (Note that the first equation follows from Lemma \ref{exp}.) Similarly we have
\[\liminf_{t\to\infty}\frac{-\log p_\mu(1/t,x,x)}{\log t}\ge\frac{\log5}{\log5+\log3},\]
and the proof is complete.
\end{proof}

\appendix
\section{Equivalence of local properties}\label{ApLoc}
Let us recall that $X$ is a set, $(\Mc{E,F})$ is a resistance form on $X$ and $R$ is the resistance metric associated with $(\Mc{E,F}).$ In this appendix we discuss the relation between the local property of $(\Mc{E,F})$ and that of the Dirichlet form induced by $(\Mc{E,F}).$ We also recall that the local property of a Dirichlet form is defined as follows.
\begin{defi}[Local]
	Let $(Y,\rho)$ be a locally compact separable metric space and $\nu$ be a Radon measure on $Y$ with full support. A Dirichlet form $(E,D)$ on $L^2(Y,\nu)$ is called \emph{local} if $E(u,v)=0$ whenever $u,v\in D$ have disjoint compact supports, where the \emph{support} $\Mr{supp}(u)$ of $u\in L^2(Y,\nu)$ is defined as the support of the measure $ud\nu$ on $(Y,\rho).$
\end{defi}

By \cite[Theorem 9.4]{Kig12}, if $(\Mc{E,F})$ is a regular resistance form satisfying (ACC), then for each Radon measure $\mu$ on $X$ with full support, $(\Mc{E}_\mu,\Mc{D}_\mu),$ defined in the same way as Lemma \ref{xhk}, is a regular Dirichlet form on $L^2(X,\mu).$ Here we remark that $\Mr{supp}(u)=\ol{\{x\in X\mid u(x)\ne 0\}}$ because $\Mc{F}\subset C(X,R).$ Therefore by the definition of $\Mc{D}_\mu$, $(\Mc{E}_\mu,\Mc{D}_\mu)$ is a local Dirichlet form (over $(X,R)$) if $(\Mc{E,F})$ is a local resistance form. In this appendix we prove that, under Assumption \ref{ass}, the converse direction is also true. Indeed, the following holds.
\begin{prop}\label{PropDen}
	Assume that $R$ is complete and doubling. Then for any $u\in \Mc{F},$ there exists $\{u_n\}_{n\ge0}\subset\Mc{F}\cap C_0(X,R)$ such that $\Mr{supp}(u_n)\subset \Mr{supp}(u)$ for any $n$ and $\lim_{n\to\infty}\Mc{E}(u-u_n,u-u_n)=0.$
\end{prop}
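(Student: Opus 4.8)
The plan is to approximate $u$ by a double truncation --- first in amplitude, then in space --- relying on the lattice structure of $\Mc{F}$ together with the bound $\Mc{E}(\chi_r,\chi_r)\lesssim 1/r$ for the cutoff functions furnished by Lemma~\ref{resbb}. Since $R$ is complete and doubling, closed balls are totally bounded, hence compact, so every continuous function supported in a bounded set lies in $C_0(X,R)$; this guarantees the approximants are compactly supported. Throughout I write $\Mc{E}(h):=\Mc{E}(h,h)$.

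First I would record two auxiliary facts about $(\Mc{E,F})$ deduced from Proposition~\ref{comp}. Fix a spread sequence $\{V_n\}$ with associated resistance weights $\mu_n$, whose off-diagonal entries are nonnegative. For $u,v\in\Mc{F}$, the elementary inequality $(\min(a_1,b_1)-\min(a_2,b_2))^2+(\max(a_1,b_1)-\max(a_2,b_2))^2\le(a_1-a_2)^2+(b_1-b_2)^2$ (sorting is a contraction on $\Mb{R}^2$), applied termwise with $a_1=u(x),b_1=v(x),a_2=u(y),b_2=v(y)$, gives $\Mc{E}|_{V_n}((u\wedge v)|_{V_n})+\Mc{E}|_{V_n}((u\vee v)|_{V_n})\le\Mc{E}|_{V_n}(u|_{V_n})+\Mc{E}|_{V_n}(v|_{V_n})$ for every $n$. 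Because $u\wedge v,u\vee v\in C(X,R)$ and the right-hand side converges to $\Mc{E}(u)+\Mc{E}(v)<\infty$, Proposition~\ref{comp} shows $u\wedge v,u\vee v\in\Mc{F}$ (so $\Mc{F}$ is a lattice) and, letting $n\to\infty$, yields the submodularity inequality $\Mc{E}(u\wedge v)+\Mc{E}(u\vee v)\le\Mc{E}(u)+\Mc{E}(v)$. Taking $v\ge0$ constant and using \eqref{RF1} also gives $\Mc{E}(u\wedge N)\le\Mc{E}(u)$.

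Next I reduce to $u\ge0$: writing $u=(u\vee0)-((-u)\vee0)$ with both summands in $\Mc{F}$ and supported in $\Mr{supp}(u)$, it suffices to approximate each separately and take differences. For $u\ge0$ I set $u_{N,r}:=u\wedge(N\chi_r)$, where $\chi_r\in\Mc{F}\cap C_0(X,R)$ is the optimal cutoff for $\Mc{R}(\ol{B(x_0,r)},B(x_0,2r)^c)$ from Lemma~\ref{resbb}, so that $0\le\chi_r\le1$, $\chi_r\equiv1$ on $B(x_0,r)$, $\chi_r\equiv0$ off $B(x_0,2r)$ and $\Mc{E}(\chi_r)\lesssim1/r$. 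Each $u_{N,r}$ lies in $\Mc{F}$ by the lattice property, is supported in $\ol{B(x_0,2r)}\cap\Mr{supp}(u)$ (compact), hence in $\Mc{F}\cap C_0(X,R)$ with support inside $\Mr{supp}(u)$. The convergence is obtained by a weak-to-strong argument in the Hilbert space $(\Mc{F}/\!\sim,\Mc{E})$: for fixed $N$ one has $u_{N,r}\to u\wedge N$ pointwise as $r\to\infty$ (once $x\in B(x_0,r)$, $u_{N,r}(x)=u(x)\wedge N$), while submodularity gives $\Mc{E}(u_{N,r})\le\Mc{E}(u\wedge N)+N^2\Mc{E}(\chi_r)\le\Mc{E}(u\wedge N)+O(1/r)$; similarly $u\wedge N\to u$ pointwise with $\Mc{E}(u\wedge N)\le\Mc{E}(u)$. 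Since the functionals $h\mapsto h(x)-h(y)$ are $\Mc{E}$-continuous on $\Mc{F}/\!\sim$ (as $|h(x)-h(y)|\le R(x,y)^{1/2}\Mc{E}(h)^{1/2}$), pointwise convergence identifies the weak limits modulo constants, and boundedness of the energies forces weak convergence of the full sequences; combining weak lower semicontinuity of $\Mc{E}$ with the $\limsup$ bounds above yields norm convergence, whence $\Mc{E}(u\wedge N-u_{N,r})\to0$ and $\Mc{E}(u-u\wedge N)\to0$. A diagonal choice $N_n,r_n\to\infty$ then produces $u_n=u\wedge(N_n\chi_{r_n})$ with the required properties.

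The main obstacle is the spatial cutoff, i.e.\ controlling the ``energy at infinity''. A direct spatial truncation of an \emph{unbounded} $u$ fails, because the cutoff height $N_r$ needed so that $N_r\chi_r$ dominates $u$ on $B(x_0,r)$ satisfies $N_r\lesssim r^{1/2}$ (as $R$ is the metric itself), making $N_r^2\Mc{E}(\chi_r)=O(1)$ rather than $o(1)$; this is exactly why the amplitude truncation $u\wedge N$ must be carried out \emph{first}, fixing the height before letting $r\to\infty$. The other delicate point is that convergence lives modulo constants in $\Mc{F}/\!\sim$, so the weak limit must be pinned down through the difference functionals $h\mapsto h(x)-h(y)$ rather than by evaluation, and one must invoke Hilbert-space weak compactness together with the energy $\limsup$ bounds to upgrade weak convergence to convergence in $\Mc{E}$.
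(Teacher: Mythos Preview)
Your argument is correct and tracks the paper's proof closely: both truncate in amplitude first, then cut off spatially, and both close via the same weak-to-strong principle (pointwise convergence plus $\limsup\Mc{E}(u_n)\le\Mc{E}(u)$ forces $\Mc{E}(u-u_n)\to 0$; the paper isolates this as Proposition~\ref{PropKaj1}, you argue it directly via weak compactness in $\Mc{F}/\!\sim$). The only substantive difference is the spatial cutoff itself: the paper \emph{multiplies}, setting $u_n=f_n u$ for bounded $u$ and controlling the energy via the product inequality of Lemma~\ref{LemKaj3}, whereas you take a \emph{minimum}, $u_{N,r}=u\wedge(N\chi_r)$, and invoke submodularity. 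Your route avoids proving the Leibniz-type bound at the cost of the preliminary reduction to $u\ge 0$; the paper's multiplication handles bounded $u$ of either sign directly. Neither is shorter.

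One citation fix: Proposition~\ref{PropDen} is stated under only ``$R$ complete and doubling'', without uniform perfectness, so Lemma~\ref{resbb} (which sits under Assumption~\ref{ass}) is not formally available. The correct reference for $\Mc{E}(\chi_r)\lesssim 1/r$ in this generality is Lemma~\ref{LemReslow}, which the appendix records precisely for this purpose.
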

\begin{cor}\label{CorLoc}
We make the Assumption \ref{ass} and let $\mu$ be a Radon measure on $X$ with full support. Then the following conditions are equivalent.
\begin{enumerate}
	\item $\Mc{E}(u,v)=0$ if $u,v\in\Mc{F}$ and $\Mr{supp}(u)\cap\Mr{supp}(v)=\emptyset.$
	\item $(\Mc{E,F})$ is a local resistance form.
	\item $(\Mc{E}_\mu,\Mc{D}_\mu)$ is a local Dirichlet form.
\end{enumerate} 
\end{cor}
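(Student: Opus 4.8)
The plan is to establish the cycle $(1)\Rightarrow(2)\Rightarrow(3)\Rightarrow(1)$, in which the first two implications are essentially immediate and the last is the substantive one, resting on Proposition \ref{PropDen}.

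First, $(1)\Rightarrow(2)$ is trivial. If $R(\{u\neq0\},\{v\neq0\})>0$, then, since $R$ is a metric and hence continuous, the closed supports $\Mr{supp}(u)=\ol{\{u\neq0\}}$ and $\Mr{supp}(v)=\ol{\{v\neq0\}}$ are disjoint (a common limit point would force $R(\{u\neq0\},\{v\neq0\})=0$). Thus the hypothesis of Definition \ref{local} is a special case of that of $(1)$, so $(1)$ yields $\Mc{E}(u,v)=0$. The implication $(2)\Rightarrow(3)$ is the one already noted before the statement: by Lemma \ref{xhk} one has $\Mc{D}_\mu\subset\Mc{F}\cap L^2(X,\mu)\subset C(X,R)$, so any $u,v\in\Mc{D}_\mu$ with disjoint compact supports satisfy $\Mr{supp}(u)=\ol{\{u\neq0\}}$ and $\Mr{supp}(v)=\ol{\{v\neq0\}}$; these are disjoint compact sets, hence $R(\{u\neq0\},\{v\neq0\})\ge R(\Mr{supp}(u),\Mr{supp}(v))>0$, and locality of $(\Mc{E,F})$ forces $\Mc{E}_\mu(u,v)=\Mc{E}(u,v)=0$.

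The heart of the argument is $(3)\Rightarrow(1)$. I would take $u,v\in\Mc{F}$ with $\Mr{supp}(u)\cap\Mr{supp}(v)=\emptyset$ and apply Proposition \ref{PropDen} to produce $\{u_n\}\subset\Mc{F}\cap C_0(X,R)$ with $\Mr{supp}(u_n)\subset\Mr{supp}(u)$ and $\Mc{E}(u-u_n,u-u_n)\to0$, and likewise $\{v_m\}\subset\Mc{F}\cap C_0(X,R)$ with $\Mr{supp}(v_m)\subset\Mr{supp}(v)$ and $\Mc{E}(v-v_m,v-v_m)\to0$. Since $u_n,v_m\in\Mc{F}\cap C_0(X,R)\subset\Mc{D}_\mu$ have compact supports contained in the disjoint sets $\Mr{supp}(u)$ and $\Mr{supp}(v)$, those supports are disjoint, so the assumed locality of $(\Mc{E}_\mu,\Mc{D}_\mu)$ gives $\Mc{E}(u_n,v_m)=\Mc{E}_\mu(u_n,v_m)=0$ for all $n,m$. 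Passing to the limit with the Cauchy--Schwarz inequality for $\Mc{E}$, first letting $n\to\infty$ to obtain $\Mc{E}(u,v_m)=0$ and then $m\to\infty$ to obtain $\Mc{E}(u,v)=0$, completes the implication and hence the cycle.

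The point on which everything turns is the support control built into Proposition \ref{PropDen}: a plain $\Mc{E}_{\mu,1}$-approximation of $u$ by compactly supported functions could have supports spilling into $\Mr{supp}(v)$, destroying the disjointness required to invoke locality. Because Proposition \ref{PropDen} keeps $\Mr{supp}(u_n)\subset\Mr{supp}(u)$, disjointness is preserved throughout the approximation, and continuity of $\Mc{E}$ in each argument makes the limiting step routine. I therefore expect the genuine difficulty to be concentrated entirely in Proposition \ref{PropDen} itself; granting it, the equivalence follows directly as above.
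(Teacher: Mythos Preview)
Your proof is correct and follows the same route as the paper's: the cycle $(1)\Rightarrow(2)\Rightarrow(3)\Rightarrow(1)$, with the last implication relying on Proposition~\ref{PropDen}. The paper's proof is terser, merely saying ``$(3)\Rightarrow(1)$ follows from Theorem~\ref{xbf} and Proposition~\ref{PropDen}''; the reference to Theorem~\ref{xbf} is only there to ensure that Assumption~\ref{ass} (namely $\ard(X,R)<\infty$) implies $(X,R)$ is doubling, which is the hypothesis needed for Proposition~\ref{PropDen}---a point you use implicitly but do not spell out.
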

\begin{proof}
	$(1)\Rightarrow(2)\Rightarrow(3)$ is obvious. $(3)\Rightarrow(1)$ follows from Theorem \ref{xbf} and Proposition \ref{PropDen}.
\end{proof}
In the remainder of this appendix, we assume that $R$ is complete and doubling, and prove Proposition \ref{PropDen}. In the same way as the proof of Lemma \ref{resbb}, the following inequality holds without the uniform perfectness condition.
\begin{lem}\label{LemReslow}
	$\Mc{R}(\ol{B(x,r)}, B(x,2r)^c)\gtrsim r$ for any $x\in X$ and  $r>0.$
\end{lem}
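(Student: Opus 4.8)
The plan is to prove the lower bound by exhibiting, for each $x\in X$ and $r>0$ with $B(x,2r)\ne X$, a cutoff $g\in\Mc{F}$ satisfying $g\equiv 1$ on $\ol{B(x,r)}$, $g\equiv 0$ on $B(x,2r)^c$ and $\Mc{E}(g,g)\lesssim 1/r$; then $\Mc{R}(\ol{B(x,r)},B(x,2r)^c)\ge\Mc{E}(g,g)^{-1}\gtrsim r$, while if $B(x,2r)=X$ the resistance is $\infty$ and there is nothing to prove. This follows the proof of Lemma \ref{resbb}, and the decisive observation is that uniform perfectness entered there only to secure the \emph{upper} bound $\Mc{R}(x,B(x,\rho)^c)\le C\rho$: for the present one-sided inequality we need only the \emph{lower} bound $\Mc{R}(x,B(x,\rho)^c)\ge C^{-1}\rho$.

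First I would record the lower resistance estimate $\Mc{R}(x,B(x,\rho)^c)\ge C^{-1}\rho$ with $C$ uniform in $x,\rho$, valid under completeness and doubling alone. This is obtained exactly as in \eqref{eq2}: from the lower bound in \cite[Lemma 7.10]{Kig12} for the annular resistances $\Mc{R}(x,B(x,2^k)^c\cap\ol{B(x,2^{k+N})})\gtrsim 2^k$, together with the chaining $g_a:=\inf_{k\ge a}f_k\in\Mc{F}$ furnished by Lemma \ref{inf}(2). The annular \emph{upper} bound, which does require uniform perfectness, is not used in this derivation.

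Next I would build a local cutoff at a single centre. For $z\in X$ let $f_{z,\rho}$ be the optimal function for $\Mc{R}(z,B(z,\rho)^c)$. By the harmonic estimate \cite[Theorems 4.1 and 4.3 and Lemma 4.5]{Kig12},
\[
f_{z,\rho}(y)\ge\frac12-\frac{R(z,y)}{2\Mc{R}(z,B(z,\rho)^c)}\ge\frac12-\frac{\eta\rho}{2C^{-1}\rho}
\]
for $y\in B(z,\eta\rho)$; choosing $\eta=1/(2C)$ gives $f_{z,\rho}\ge 1/4$ on $\ol{B(z,\rho/(2C))}$. This uses only the lower bound on $\Mc{R}(z,B(z,\rho)^c)$ in the denominator, in contrast to Lemma \ref{resbb}. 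Truncating, $g_z:=(4f_{z,\rho}\wedge 1)\vee 0$ satisfies $g_z\equiv 1$ on $\ol{B(z,\rho/(2C))}$, $g_z\equiv 0$ on $B(z,\rho)^c$ and $\Mc{E}(g_z,g_z)\le 16\,\Mc{E}(f_{z,\rho},f_{z,\rho})=16/\Mc{R}(z,B(z,\rho)^c)\le 16C/\rho$.

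Finally I would globalise by covering. Since $(X,R)$ is doubling, cover $\ol{B(x,r)}$ by balls $B(x_i,r/(2C))$, $1\le i\le N$, with $x_i\in\ol{B(x,r)}$ and $N=N(C)$ uniform. For $y\in B(x_i,r)$ one has $R(x,y)\le R(x,x_i)+R(x_i,y)<2r$, so $B(x,2r)^c\subset B(x_i,r)^c$. Setting $g:=\max_{1\le i\le N} g_{x_i}$ (with $\rho=r$), Lemma \ref{inf}(1) gives $g\in\Mc{F}$ with $\Mc{E}(g,g)\le\sum_i\Mc{E}(g_{x_i},g_{x_i})\le 16CN/r$, while $g\equiv 1$ on $\cup_i B(x_i,r/(2C))\supset\ol{B(x,r)}$ and $g\equiv 0$ on $B(x,2r)^c$, which is the desired cutoff. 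The main obstacle is precisely the first step: verifying that the lower estimate $\Mc{R}(x,B(x,\rho)^c)\gtrsim\rho$ truly survives the removal of uniform perfectness, i.e. that the annular lower bound of \cite[Lemma 7.10]{Kig12} and the chaining underlying \eqref{eq2} rely only on completeness and doubling.
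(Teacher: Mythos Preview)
Your proposal is correct and follows essentially the same approach as the paper, which simply states that the argument of Lemma~\ref{resbb} goes through without uniform perfectness. You have carefully reconstructed this, correctly identifying that only the lower bound $\Mc{R}(x,B(x,\rho)^c)\gtrsim\rho$ is needed (in particular, your splitting $f_{z,\rho}(y)\ge\tfrac12-\tfrac{R(z,y)}{2\Mc{R}(z,B(z,\rho)^c)}$ makes this transparent, whereas the original display in Lemma~\ref{resbb} invoked both bounds); the concern you flag at the end is legitimate but resolvable, since the annular lower bound from \cite[Lemma~7.10]{Kig12} used in deriving~\eqref{eq2} requires only doubling (and holds trivially when the annulus is empty).
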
 
The following Proposition \ref{PropKaj1}, Corollary \ref{CorKaj2} and Lemma \ref{LemKaj3} were proved in \cite{Kaj} for a general resistance form whose associated resistance metric is not necessarily doubling. Here we give proofs for the same reason as we did for Lemma \ref{inf}.
\begin{prop}[{cf.~\cite[Theorem 2.38 (2)]{Kaj}}]\label{PropKaj1}
Let $u\in\Mc{F}$ and $\{u_n\}_{n\ge0}\subset \Mc{F}.$ Then $\lim_{n\to\infty}\Mc{E}(u-u_n,u-u_n)=0$ if and only if $\limsup_{n\to\infty}\Mc{E}(u_n,u_n)\le\Mc{E}(u,u)$ and $\lim_{n\to\infty}(u-u_n)(x)$ exists in $\Mb{R}$ and is constant on $X.$
\end{prop}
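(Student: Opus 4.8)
The plan is to carry out everything in the Hilbert space $H:=(\Mc{F}/\!\sim,\Mc{E})$ provided by \eqref{RF2}, where $\sim$ identifies functions differing by a constant; writing $[w]$ for the class of $w\in\Mc{F}$, the condition $\lim_{n\to\infty}\Mc{E}(u-u_n,u-u_n)=0$ says exactly that $[u_n]\to[u]$ in $H$. Necessity is elementary; sufficiency---recovering strong convergence in $H$ from the one-sided norm bound together with pointwise data---is the real content, and I would obtain it by first extracting weak convergence and then upgrading it.

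For necessity, assume $[u_n]\to[u]$ in $H$. Then (a) follows at once from the reverse triangle inequality for the $\Mc{E}$-seminorm, which even gives $\Mc{E}(u_n,u_n)\to\Mc{E}(u,u)$. For the pointwise assertion I would invoke the fundamental resistance inequality $|w(x)-w(y)|\le R(x,y)^{1/2}\Mc{E}(w,w)^{1/2}$ with $w=u-u_n$: since $\Mc{E}(u-u_n,u-u_n)\to0$, the oscillation $(u-u_n)(x)-(u-u_n)(y)$ tends to $0$ for all $x,y$, so the limit function $x\mapsto\lim_n(u-u_n)(x)$ is constant wherever it is defined.

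For sufficiency I would proceed in three steps. Fix a reference point $p\in X$; by the resistance inequality each functional $v\mapsto v(x)-v(p)$ is bounded on $H$, so Riesz representation yields $g_x\in H$ with $\Mc{E}(g_x,v)=v(x)-v(p)$ for all $v\in\Mc{F}$. Next, $\{g_x\}_{x\in X}$ has dense span, because any $w$ with $\Mc{E}(g_x,w)=w(x)-w(p)=0$ for every $x$ is constant, i.e. $[w]=0$, so the orthogonal complement of $\mathrm{span}\{g_x\}$ is trivial. Finally, after subtracting the constant $c:=\lim_n(u-u_n)(x)$ from $u$ (which changes neither the energies nor (a)) I may assume $u_n\to u$ pointwise; then $\Mc{E}(u_n-u,g_x)=(u_n-u)(x)-(u_n-u)(p)\to0$ for each $x$, and since $\{[u_n]\}$ is bounded in $H$ by (a), convergence against the dense family $\{g_x\}$ forces $[u_n]\rightharpoonup[u]$ weakly. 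The identity $\Mc{E}(u_n-u,u_n-u)=\Mc{E}(u_n,u_n)-2\Mc{E}(u_n,u)+\Mc{E}(u,u)$ then combines weak convergence ($\Mc{E}(u_n,u)\to\Mc{E}(u,u)$) with (a) ($\limsup_n\Mc{E}(u_n,u_n)\le\Mc{E}(u,u)$) to give $\limsup_n\Mc{E}(u_n-u,u_n-u)\le0$, as required.

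The main obstacle is exactly this upgrade from pointwise convergence to weak convergence in $H$: it is where the structure of the resistance form is used, through the Riesz representatives $g_x$ of the point-difference functionals and the density of their span. Once weak convergence is available, the weak-to-strong passage driven by the one-sided bound (a) is the classical Hilbert-space argument and needs nothing further; notably, only the Hilbert-space axiom \eqref{RF2} and the resistance inequality are needed here, not the completeness or doubling of $R$.
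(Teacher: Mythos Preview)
Your argument is correct. For sufficiency---the substantive direction---your route differs from the paper's. The paper works through the finite-dimensional trace approximations of Proposition~\ref{comp}: choosing a spread sequence $\{V_m\}$, continuity of each $\Mc{E}|_{V_m}$ on $\ell(V_m)$ together with pointwise convergence gives $\Mc{E}(u,u)=\lim_m\lim_n\Mc{E}|_{V_m}(u_n|_{V_m},u_n|_{V_m})\le\liminf_n\Mc{E}(u_n,u_n)$, whence $\Mc{E}(u_n,u_n)\to\Mc{E}(u,u)$; the same argument applied to the midpoints $u^*_n:=(u+u_n)/2$ then yields $\Mc{E}(u^*_n,u^*_n)\to\Mc{E}(u,u)$, and the parallelogram identity $\Mc{E}(u-u_n,u-u_n)=2\Mc{E}(u,u)+2\Mc{E}(u_n,u_n)-4\Mc{E}(u^*_n,u^*_n)$ finishes. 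You instead argue abstractly in $H=\Mc{F}/\!\sim$: the Riesz representatives $g_x$ of the point-difference functionals span a dense subspace, so boundedness plus pointwise convergence force weak convergence $[u_n]\rightharpoonup[u]$, and the one-sided norm bound upgrades weak to strong via the expansion of $\Mc{E}(u_n-u,u_n-u)$. Both are standard Hilbert-space manoeuvres; yours bypasses Proposition~\ref{comp} and hence does not rely on separability of $(X,R)$, while the paper's stays within the trace-form machinery it has already set up and avoids weak topologies altogether. For necessity your caveat ``wherever it is defined'' is apt: energy convergence alone does not force the pointwise limit of $u-u_n$ to exist (consider $u_n=u+(-1)^n$), so the statement is slightly imprecise on this side; the paper glosses over it in the same way.
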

\begin{proof}
	The necessity is clear by the triangle inequality of $\Mc{E}^{1/2}$ and \eqref{RF3}. For the sufficiency, let $\{V_m\}_{m\ge0}$ be a spread sequence of $(X,R)$ then
	\begin{multline*}
		\Mc{E}(u,u)=\lim_{m\to\infty}\Mc{E}|_{V_m}(u|_{V_m},u|_{V_m})\\=\lim_{m\to\infty}\lim_{n\to\infty}\Mc{E}|_{V_m}(u_n|_{V_m},u_n|_{V_m})\le\liminf_{n\to\infty}\Mc{E}(u_n,u_n),
	\end{multline*}
	which proves $\lim_{n\to\infty}\Mc{E}(u_n,u_n)=\Mc{E}(u,u).$ Let $u^*_n:=(u+u_n)/2,$ then by the triangle inequality of $\Mc{E}^{1/2},$ $\{u^*_n\}_{n\ge0}$ also satisfies the same condition as $\{u_n\}_{n\ge0}.$ Therefore $\lim_{n\to\infty}\Mc{E}(u^*_n,u^*_n)=\Mc{E}(u,u)$ and
	\begin{align*}
		\lim_{n\to\infty}\Mc{E}(u-u_n,u-u_n)=2\Mc{E}(u,u)+\lim_{n\to\infty}(2\Mc{E}(u_n,u_n)-\Mc{E}(2u^*_n,2u^*_n))=0.
	\end{align*}
\end{proof}
\begin{cor}[{cf.~\cite[Corollary 2.39 (4)]{Kaj}}]\label{CorKaj2}
$\lim_{n\to\infty}\Mc{E}(u-\hat u_n,u-\hat u_n)=0$ for any $u\in\Mc{F},$ where $\hat u_n=(u\wedge n)\vee (-n).$ 
\end{cor}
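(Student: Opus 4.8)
The plan is to apply Proposition \ref{PropKaj1} with the choice $u_n=\hat u_n$, so I must verify its two hypotheses: that $\limsup_{n\to\infty}\Mc{E}(\hat u_n,\hat u_n)\le\Mc{E}(u,u)$, and that $\lim_{n\to\infty}(u-\hat u_n)(x)$ exists in $\Mb{R}$ and is constant in $x$. The second hypothesis is immediate. For each fixed $x\in X$ and every $n\ge|u(x)|$ one has $\hat u_n(x)=(u(x)\wedge n)\vee(-n)=u(x)$, so $(u-\hat u_n)(x)=0$ for all large $n$; hence $\lim_{n\to\infty}(u-\hat u_n)(x)=0$, which is the constant function $0$ on all of $X$.

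The first hypothesis is the substantive point, and I would establish it through the finite approximations, exactly in the spirit of the proof of Lemma \ref{inf}. The truncation map $t\mapsto(t\wedge n)\vee(-n)$ is $1$-Lipschitz, so $|\hat u_n(x)-\hat u_n(y)|\le|u(x)-u(y)|$ for all $x,y\in X$. Fixing a spread sequence $\{V_m\}_{m\ge0}$ of $(X,R)$ with associated resistance weights $\mu_m$, and recalling that $(\mu_m)_{x,y}\ge0$ for $x\ne y$, this pointwise bound gives
\[
\Mc{E}|_{V_m}(\hat u_n|_{V_m},\hat u_n|_{V_m})=\frac{1}{2}\sum_{x,y\in V_m}(\hat u_n(x)-\hat u_n(y))^2(\mu_m)_{x,y}\le\Mc{E}|_{V_m}(u|_{V_m},u|_{V_m})\le\Mc{E}(u,u)
\]
for every $m$. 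Since $u\in\Mc{F}\subset C(X,R)$ and the truncation is continuous, $\hat u_n\in C(X,R)$; letting $m\to\infty$ and invoking Proposition \ref{comp} then yields both $\hat u_n\in\Mc{F}$ and $\Mc{E}(\hat u_n,\hat u_n)\le\Mc{E}(u,u)$. In fact the bound holds for every $n$, not merely in the $\limsup$.

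With both hypotheses of Proposition \ref{PropKaj1} verified, the conclusion $\lim_{n\to\infty}\Mc{E}(u-\hat u_n,u-\hat u_n)=0$ follows at once. I do not expect a serious obstacle: the only mild subtlety is that one must certify $\hat u_n\in\Mc{F}$ rather than merely bound its energy, and that is precisely what the characterization of $\Mc{F}$ in Proposition \ref{comp} supplies. One could alternatively read the energy-contraction bound off from \eqref{RF4} via the standard fact that every normal contraction operates on a resistance form, but routing it through the finite approximations keeps the argument self-contained and parallel to Lemma \ref{inf}.
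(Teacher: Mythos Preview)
Your proof is correct and follows the same route as the paper: both apply Proposition~\ref{PropKaj1} after checking pointwise convergence and the energy-contraction bound $\Mc{E}(\hat u_n,\hat u_n)\le\Mc{E}(u,u)$. The only cosmetic difference is that the paper obtains the latter bound (and $\hat u_n\in\Mc{F}$) directly from \eqref{RF4} via the rescaling $u\mapsto\frac{u+n}{2n}$, whereas you re-derive it through the finite approximations of Proposition~\ref{comp}; as you yourself note, these are equivalent.
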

\begin{proof}
	It immediately follows from Proposition \ref{PropKaj1} and \eqref{RF4}. 
\end{proof}
\begin{lem}[{cf.~\cite[Corollary 2.39 (3)]{Kaj}}]\label{LemKaj3}
	Let $u,v\in\Mc{F}$ be bounded. Then $uv\in\Mc{F}$ and 
	$\Mc{E}(uv,uv)^{1/2}\le \|u\|_{\infty}\Mc{E}(v,v)^{1/2}+\|v\|_{\infty}\Mc{E}(u,u)^{1/2},$ where $\|u\|_{\infty}=\sup_{x\in X}|u(x)|.$
\end{lem}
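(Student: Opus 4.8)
The plan is to reduce the statement to the finite-dimensional traces $(\Mc{E}|_{V_m},\ell(V_m))$ along a spread sequence, where the energy is a genuine weighted $\ell^2$-seminorm of the difference function $(x,y)\mapsto f(x)-f(y)$, and then exploit the triangle inequality for that seminorm. First I would fix a spread sequence $\{V_m\}_{m\ge0}$ of $(X,R)$, which exists because $R$ is complete and doubling, hence separable. Let $\mu_m=\{(\mu_m)_{x,y}\}$ be the resistance weights associated with $(\Mc{E}|_{V_m},\ell(V_m))$, so that $(\mu_m)_{x,y}\ge0$ whenever $x\ne y$ and $\Mc{E}|_{V_m}(f,f)=\frac{1}{2}\sum_{x,y\in V_m}(f(x)-f(y))^2(\mu_m)_{x,y}$ for every $f\in\ell(V_m)$.

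The algebraic heart of the argument is the \emph{symmetric} product-difference identity
\[(uv)(x)-(uv)(y)=\frac{u(x)+u(y)}{2}\bigl(v(x)-v(y)\bigr)+\frac{v(x)+v(y)}{2}\bigl(u(x)-u(y)\bigr),\]
which, using $|u(x)+u(y)|/2\le\|u\|_\infty$ and $|v(x)+v(y)|/2\le\|v\|_\infty$, yields the pointwise-in-$(x,y)$ bound
\[\bigl|(uv)(x)-(uv)(y)\bigr|\le\|u\|_\infty\bigl|v(x)-v(y)\bigr|+\|v\|_\infty\bigl|u(x)-u(y)\bigr|.\]
Regarding $(x,y)\mapsto f(x)-f(y)$ as an element of the space $\ell^2(V_m\times V_m)$ equipped with the nonnegative weights $\tfrac12(\mu_m)_{x,y}$, the triangle inequality in this space then gives
\[\Mc{E}|_{V_m}\bigl((uv)|_{V_m},(uv)|_{V_m}\bigr)^{1/2}\le\|u\|_\infty\,\Mc{E}|_{V_m}(v|_{V_m},v|_{V_m})^{1/2}+\|v\|_\infty\,\Mc{E}|_{V_m}(u|_{V_m},u|_{V_m})^{1/2}\]
for every $m\ge0$. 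I would stress here that choosing the symmetric decomposition (rather than the one-sided Leibniz rule $u(x)(v(x)-v(y))+v(y)(u(x)-u(y))$) is what produces the correct constants $\|u\|_\infty$ and $\|v\|_\infty$; this is really the only place any care is needed, and there is no genuine obstacle beyond it.

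Finally I would pass to the limit $m\to\infty$. Since the sequences $\{\Mc{E}|_{V_m}(u|_{V_m},u|_{V_m})\}_m$ and $\{\Mc{E}|_{V_m}(v|_{V_m},v|_{V_m})\}_m$ are increasing with limits $\Mc{E}(u,u)$ and $\Mc{E}(v,v)$ respectively (Proposition \ref{comp} and the remark following it), the right-hand side above is bounded, uniformly in $m$, by $\|u\|_\infty\Mc{E}(v,v)^{1/2}+\|v\|_\infty\Mc{E}(u,u)^{1/2}$. Because $u,v\in\Mc{F}\subset C(X,R)$, the product $uv$ is continuous on $(X,R)$, and the uniform bound shows that $\lim_{m\to\infty}\Mc{E}|_{V_m}((uv)|_{V_m},(uv)|_{V_m})$ is finite; hence $uv\in\Mc{F}$ with $\Mc{E}(uv,uv)=\lim_{m\to\infty}\Mc{E}|_{V_m}((uv)|_{V_m},(uv)|_{V_m})$ by Proposition \ref{comp}. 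Taking $m\to\infty$ in the displayed inequality then delivers the asserted estimate $\Mc{E}(uv,uv)^{1/2}\le\|u\|_\infty\Mc{E}(v,v)^{1/2}+\|v\|_\infty\Mc{E}(u,u)^{1/2}$, completing the proof.
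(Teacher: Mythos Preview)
Your proof is correct and takes essentially the same approach as the paper, which simply says ``This follows from Proposition~\ref{comp} with easy calculation''; you have spelled out exactly that calculation via the finite-dimensional traces and the triangle inequality in the weighted $\ell^2$ space. One minor quibble: your remark that the symmetric decomposition is needed to obtain the constants $\|u\|_\infty,\|v\|_\infty$ is not quite right, since the one-sided identity $(uv)(x)-(uv)(y)=u(x)(v(x)-v(y))+v(y)(u(x)-u(y))$ already gives $|u(x)|\le\|u\|_\infty$ and $|v(y)|\le\|v\|_\infty$ and hence the same bound --- but this does not affect the validity of your argument.
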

\begin{proof}
	This follows from Proposition \ref{comp} with easy calculation.
\end{proof}
\begin{proof}[Proof of Proposition \ref{PropDen}]
	Since Corollary \ref{CorKaj2} holds, we only need to show the case that $u\in\Mc{F}$ is bounded and $\diam(X,R)=\infty$. Fix some $x\in X$ and let $f_n$ be the optimal function for $\Mc{R}(\ol{B(x,2^n)}, B(x,2^{n+1})^c)$ for each $n\ge0.$ Let $u_n=f_nu,$ then $u_n\in C_0(X,R)$ and $\Mr{supp}(u_n)\subset \Mr{supp}(u).$ Moreover, $u_n\in\Mc{F}$ and 
	\begin{align*}
		\limsup_{n\to\infty}\Mc{E}(u_n,u_n)&\le \bigl( \Mc{E}(u,u)^{1/2}+\|u\|_{\infty}\lim_{n\to\infty}\Mc{E}(f_n,f_n)^{1/2}\bigr)^2=\Mc{E}(u,u)
	\end{align*}
because of Lemmas \ref{LemReslow} and \ref{LemKaj3}. Therefore $\lim_{n\to\infty}\Mc{E}(u-u_n,u-u_n)=0$ by Proposition \ref{PropKaj1}, which completes the proof.
\end{proof}
\section*{Acknowledgments}
I would like to thank Naotaka Kajino for his kind grammatical corrections to the introduction and technical comments. In particular, Appendix A was motivated by his remark. I also thank my supervisor Takashi Kumagai for his comments on the introduction.
This work was supported by JSPS KAKENHI Grant Number JP20J23120. 


\begin{thebibliography}{99}
	\bibitem{BB90}
	M. T. Barlow and R. F. Bass, On the resistance of the Sierpi\'nski carpet. \textit{Proc. Roy. Soc. London Ser. A} \textbf{431} (1990), no. 1882, 345–360. \MR{1080496} \Zbl{0729.60108}
	\bibitem{BCM}
	M. T. Barlow, Z.-Q. Chen and M. Murugan, \textit{Stability of EHI and regularity of
MMD spaces}. Preprint, 2020. \arX{2008.05152}
	\bibitem{BCK}
	M. T. Barlow, T. Coulhon and T. Kumagai, Characterization of sub-Gaussian heat kernel estimates on strongly recurrent graphs. \textit{Comm. Pure Appl. Math.} \textbf{58} (2005), no. 12, 1642–1677. \MR{2177164} \Zbl{1083.60060}
	\bibitem{BM}
	M. T. Barlow and M. Murugan, Stability of the elliptic Harnack inequality. \textit{Ann. of Math. (2)} \textbf{187} (2018), no. 3, 777–823. \MR{3779958} \Zbl{1450.35131}
	\bibitem{BA}
	A. Beurling and L. Ahlfors, The boundary correspondence under quasiconformal mappings. \textit{Acta Math.} \textbf{96} (1956), 125–142. \MR{0086869} \Zbl{0072.29602}
	\bibitem{BK}
	M. Bonk and B. Kleiner, Conformal dimension and Gromov hyperbolic groups with 2-sphere boundary. \textit{Geom. Topol.} \textbf{9} (2005), 219–246. \MR{2116315} \Zbl{1087.20033}
	\bibitem{BoP}
	M. Bourdon and H. Pajot, Cohomologie $\ell_p$ et espaces de Besov. \textit{J. Reine Angew. Math.} \textbf{558} (2003), 85-108. \MR{1979183} \Zbl{1044.20026}
	\bibitem{Car}
	M. Carrasco Piaggio, On the conformal gauge of a compact metric space. \textit{Ann. Sci. \'Ec. Norm. Sup\'er. (4)} \textbf{46} (2013), no. 3, 495–548. \MR{3099984} \Zbl{1283.30072} 
	\bibitem{Cro}
	D. A. Croydon, Scaling limits of stochastic processes associated with resistance forms. \textit{Ann. Inst. Henri Poincaré Probab. Stat.} \textbf{54} (2018), no. 4, 1939–1968. \MR{3865663} \Zbl{1417.60067}
	\bibitem{GHL}
	A. Grigor'yan, J. Hu and K.-S. Lau, Generalized capacity, Harnack inequality and heat kernels of Dirichlet forms on metric measure spaces. \textit{J. Math. Soc. Japan} \textbf{67} (2015), no. 4, 1485–1549. \MR{3417504} \Zbl{1331.35152}
	\bibitem{Ham}
	B. M. Hambly, Brownian motion on a random recursive Sierpinski gasket. \textit{Ann. Probab.} \textbf{25} (1997), no. 3, 1059–1102. \MR{1457612}  \Zbl{0895.60081}
	\bibitem{Hei}
	J. Heinonen, \textit{Lectures on analysis on metric spaces.} Universitext. Springer-Verlag, New York, 2001. x+140 pp. \MR{1800917} \Zbl{0985.46008}
	\bibitem{HP}
	E. Hille and R. S. Phillips, \textit{Functional analysis and semi-groups. rev. ed.} American Mathematical Society Colloquium Publications, Vol. 31 American Mathematical Society, Providence, R.I., 1957. xii+808 pp. \MR{0089373} \Zbl{0078.10004}
	\bibitem{Kaj}
	N. Kajino \textit{Introduction to Laplacians and heat equations on fractals} (Japanese).  Unpublished lecture notes, 2018.\\ Available at \url{https://www.kurims.kyoto-u.ac.jp/~nkajino/lectures/2018/Fractal2018/Fractal2018_Notes.pdf}
	\bibitem{KM}
	N. Kajino and M. Murugan, On the conformal walk dimension: quasisymmetric uniformization for symmetric diffusions (in press). \textit{Invent. Math.} \doi{10.1007/s00222-022-01148-3}
	\bibitem{Kas}
	A. Kasue, Convergence of metric graphs and energy forms. \textit{Rev. Mat. Iberoam.} \textbf{26} (2010), no. 2, 367–448. \MR{2677003} \Zbl{1196.31004}
	\bibitem{Kel}
	J. A. Kelingos, Boundary correspondence under quasiconformal mappings. \textit{Michigan Math. J.} \textbf{13} (1966), 235–249. \MR{0196071} \Zbl{0146.30702}
	\bibitem{Kig95}
	J. Kigami, Harmonic calculus on limits of networks and its application to dendrites. \textit{J. Funct. Anal.} \textbf{128} (1995), no. 1, 48–86. \MR{1317710} \Zbl{0820.60060}
	\bibitem{Kig01}
	\same, \textit{Analysis on fractals.} Cambridge Tracts in Mathematics, 143. Cambridge University Press, Cambridge, 2001. viii+226 pp. \MR{1840042} \Zbl{0998.28004}
	\bibitem{Kig12}
	\same, Resistance forms, quasisymmetric maps and heat kernel estimates. \textit{Mem. Amer. Math. Soc.} \textbf{216} (2012), no. 1015, vi+132 pp. \MR{2919892} \Zbl{1246.60099}
	\bibitem{Kig20}
	\same, \textit{Geometry and analysis of metric spaces via weighted partitions.} Lecture Notes in Mathematics, 2265. Springer, Cham, 2020. \MR{4175733} \Zbl{1455.28001}
	\bibitem{Kig21}
	\same, \textit{Conductive homogeneity of compact metric spaces and construction of p-energy}. Preprint, 2021. \arX{2109.08335}
	\bibitem{MT}
	J. M. Mackay and J. T. Tyson, \textit{Conformal dimension. Theory and application.} University Lecture Series, 54. American Mathematical Society, Providence, RI, 2010. xiv+143 pp. \MR{2662522} \Zbl{1201.30002}
	\bibitem{Mur}
	M. Murugan, Quasisymmetric uniformization and heat kernel estimates. \textit{Trans. Amer. Math. Soc.} \textbf{372} (2019), no. 6, 4177–4209. \MR{4009428} \Zbl{1480.52014}
	\bibitem{Pau}
	F. Paulin, Un groupe hyperbolique est d\'etermin\'e par son bord. \textit{J. London Math. Soc. (2)} \textbf{54} (1996), no. 1, 50–74. \MR{1395067} \Zbl{0854.20050}
	\bibitem{Sas1}
	K. Sasaya, Ahlfors regular conformal dimension of metrics on infinite graphs and spectral dimension of the associated random walks. \textit{J. Fractal Geom.} \textbf{9} (2022), no. 1, 89–128. \MR{4461211} \Zbl{07574259}
	\bibitem{Sas2}
	\same, \textit{Some relation between spectral dimension and Ahlfors regular conformal dimension on infinite graphs}. Preprint, 2021. \arX{2109.00851}
	\bibitem{Sas3}
	\same, Systems of dyadic cubes of complete, doubling, uniformly perfect metric spaces without detours (in press). \textit{Colloq. Math.} \doi{10.4064/cm8702-7-2022}
	\bibitem{Sem}
	S. Semmes, Good metric spaces without good parameterizations. \textit{Rev. Mat. Iberoamericana} \textbf{12} (1996), no. 1, 187–275. \MR{1387590} \Zbl{0854.57018}
	\bibitem{Shi}
	R. Shimizu, \textit{Construction of p-energy and associated energy measures on the Sierpi\'nski carpet}. Preprint, 2021. \arX{2110.13902}
	\bibitem{TV}
	P. Tukia and J. V\"ais\"al\"a, Quasisymmetric embeddings of metric spaces. \textit{Ann. Acad. Sci. Fenn. Ser. A I Math.} \textbf{5} (1980), no. 1, 97-114. \MR{0595180} \Zbl{0403.54005}
	\bibitem{Tys}
	J. T. Tyson, Sets of minimal Hausdorff dimension for quasiconformal maps. \textit{Proc. Amer. Math. Soc.} \textbf{128} (2000), no.11, 3361-3367. \MR{1676353} \Zbl{0954.30007}
	\bibitem{TW}
	J. T. Tyson and J.-M. Wu, Quasiconformal dimensions of self-similar fractals. \textit{Rev. Mat. Iberoam.} \textbf{22} (2006), no. 1, 205–258. \MR{2268118} \Zbl{1108.30015}
\end{thebibliography}
\end{document}